\theoremstyle{plain}
   \newtheorem{theorem}{Theorem}[section]
   \newtheorem{proposition}[theorem]{Proposition}
   \newtheorem{lemma}[theorem]{Lemma}
   \newtheorem{corollary}[theorem]{Corollary}
   \newtheorem{problem}{Problem}
   \newtheorem{conjecture}[theorem]{Conjecture}
\theoremstyle{definition}
   \newtheorem{definition}{Definition}[section]
\theoremstyle{remark}
   \newtheorem{remark}[theorem]{Remark}
\numberwithin{equation}{section}
\renewcommand{\MR}{\mathcal{M}}
\newcommand{\lma}{\lambda_{\rm max}}
\newcommand{\lmi}{\lambda_{\rm min}}
\newcommand{\zz}{\mathbf{z}}
\newcommand{\ww}{\mathbf{w}}
\newcommand{\ee}{\mathbf{e}}
\newcommand{\xx}{\mathbf{x}}
\newcommand{\yy}{\mathbf{y}}
\newcommand{\uu}{\mathbf{u}}
\renewcommand{\ss}{\mathbf{s}}
\newcommand{\vv}{\mathbf{v}}
\newcommand{\one}{\mathbf{1}}
\newcommand{\VV}{\mathbf{V}}
\newcommand{\A}{\mathcal{A}}
\newcommand{\EE}{\mathbb{E}}
\newcommand{\PP}{\mathbb{P}}
\newcommand{\ZZ}{\mathbb{Z}}
\newcommand{\RR}{\mathbb{R}}
\newcommand{\CC}{\mathbb{C}}
\newcommand{\XR}{\mathsf{X}}
\renewcommand{\Im}{{\rm Im}}
\def\newop#1{\expandafter\def\csname #1\endcsname{\mathop{\rm
#1}\nolimits}}
\title[Hyperbolic polynomials and the M--S--S theorem]{Hyperbolic polynomials and the Marcus--Spielman--Srivastava theorem} 
\author{Petter Br\"and\'en}
\thanks{The author is a Royal Swedish Academy of Sciences Research Fellow
  supported by a grant from the Knut and Alice Wallenberg
  Foundation. The research is also supported by the G\"oran Gustafsson Foundation.}
\address{Department of Mathematics, Royal Institute of Technology, SE-100 44 Stockholm,
Sweden}
\email{pbranden@kth.se}
\begin{document}
\begin{abstract}
Recently Marcus,  Spielman and  Srivastava gave a spectacular proof of a theorem which implies a positive solution to the Kadison--Singer problem. 
 We extend (and slightly sharpen) this theorem to the realm of hyperbolic polynomials.  A benefit of the extension is that the proof becomes coherent in its general form, and fits naturally in the theory of hyperbolic polynomials.  We also study the sharpness of the bound in the theorem, and  in the final section we describe how the hyperbolic Marcus--Spielman--Srivastava theorem may be interpreted in terms of strong Rayleigh measures. We use this to derive sufficient conditions for a weak half-plane property matroid to have $k$ disjoint bases. 
 
\end{abstract}
\maketitle

\tableofcontents

\thispagestyle{empty}

This work is based on notes from a graduate course focused on hyperbolic polynomials and the recent papers \cite{MSS1,MSS2} of Marcus,  Spielman and  Srivastava, given by the author at the Royal Institute of Technology (Stockholm) in the fall of 2013.
\newpage

\section{Introduction}

Recently Marcus,  Spielman and  Srivastava  \cite{MSS2} gave a spectacular proof of Theorem \ref{MSSmain} below, which implies a positive solution to the infamous Kadison--Singer problem  \cite{KS}. 
One purpose of this work is to extend Theorem~\ref{MSSmain}  to the realm of hyperbolic polynomials. Although our proof essentially follows the setup in \cite{MSS2}, a benefit of the extension (Theorem \ref{t1})  is that the proof becomes coherent in its general form, and fits naturally in the theory of hyperbolic polynomials.  We study the sharpness of the bound in Theorem \ref{t1}. We prove that a conjecture in \cite{MSS2} on the sharpness of the bound (Conjecture \ref{maxmax} in this paper) is equivalent to the seemingly weaker Conjecture~\ref{maxmax2}.  
Using known results about the asymptotic behavior of the largest zero of Jacobi polynomials, we prove in Section~\ref{sbound} that the bound is close to being optimal in the hyperbolic setting, see Proposition~\ref{lowprop}. 

In the final section we describe how Theorem \ref{t1} may be interpreted in terms of strong Rayleigh measures. We use this to derive sufficient conditions for a weak half-plane property matroid to have $k$ disjoint bases. These conditions are very different from Edmonds characterization in terms of the rank function of the matroid \cite{Edm}.

\medskip

The following theorem is a stronger version of Weaver's $KS_k$ conjecture \cite{We} which is known to imply a positive solution to the Kadison--Singer problem \cite{KS}. See \cite{Cas} for a review of the many consequences of Theorem \ref{MSSmain}.

\begin{theorem}[Marcus,  Spielman and  Srivastava \cite{MSS2}]\label{MSSmain}
Let $k \geq 2$ be an integer. Suppose $\vv_1, \ldots, \vv_m \in \CC^d$ satisfy $\sum_{i=1}^m \vv_i\vv_i^* = I$, where $I$ is the identity matrix. If  $\|\vv_i \|^2 \leq \epsilon$ for all $1\leq i \leq m$, then  there is a partition of $S_1\cup S_2 \cup \cdots \cup S_k=[m]:=\{1,2,\ldots,m\}$ such that 
\begin{equation}\label{sqbound1}
\left\| \sum_{i \in S_j} \vv_i\vv_i^*  \right\| \leq \frac {(1+\sqrt{k\epsilon})^2} k,    
\end{equation}
for each $j \in [k]$, where $\|\cdot \|$ denotes the operator matrix norm. 
\end{theorem}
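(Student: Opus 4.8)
\emph{Proof proposal.} The plan is to run the argument of Marcus, Spielman and Srivastava \cite{MSS2}, which is the instance of the hyperbolic machinery behind Theorem~\ref{t1} in which the hyperbolic polynomial is $\det$ on the space of Hermitian matrices. I would first reduce the partition statement to a spectral bound for a single random matrix. For $\sigma\in[k]^m$ set $\uu_i(\sigma)=\sqrt{k}\,(\ee_{\sigma(i)}\otimes\vv_i)\in\CC^{kd}$, where $\ee_1,\dots,\ee_k$ is the standard basis of $\CC^k$; then $\sum_{i=1}^m\uu_i(\sigma)\uu_i(\sigma)^{*}$ is block diagonal with $j$-th block $k\sum_{i\in\sigma^{-1}(j)}\vv_i\vv_i^{*}$, so it suffices to exhibit one $\sigma$ with $\lma\bigl(\sum_{i}\uu_i(\sigma)\uu_i(\sigma)^{*}\bigr)\le(1+\sqrt{k\epsilon})^2$ and then take $S_j=\sigma^{-1}(j)$. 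Drawing $\sigma$ uniformly from $[k]^m$, one has $\EE[\uu_i(\sigma)\uu_i(\sigma)^{*}]=I_k\otimes(\vv_i\vv_i^{*})=:A_i$, so $\sum_iA_i=I$ and $\Tr(A_i)=k\|\vv_i\|^2\le k\epsilon$.

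I would then pass to expected characteristic polynomials. Since $\det(M-\ww\ww^{*})$ is an affine function of the matrix $\ww\ww^{*}$, independence of the $\uu_i(\sigma)$ gives
\[
\mu(x)\;:=\;\EE_{\sigma}\det\!\Bigl(xI-\sum_{i=1}^{m}\uu_i(\sigma)\uu_i(\sigma)^{*}\Bigr)\;=\;\Bigl(\prod_{i=1}^{m}(1-\partial_{z_i})\Bigr)\det\!\Bigl(xI+\sum_{i=1}^{m}z_iA_i\Bigr)\Big|_{z_1=\cdots=z_m=0},
\]
the mixed characteristic polynomial of $A_1,\dots,A_m$. The polynomials $p_\sigma(x)=\det(xI-\sum_i\uu_i(\sigma)\uu_i(\sigma)^{*})$, $\sigma\in[k]^m$, form an interlacing family: fixing a prefix of $\sigma$ and letting the next coordinate range over $[k]$, the corresponding conditional averages have a common interlacing. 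This is because, for any Hermitian $C$ and independent finitely supported random vectors $\ww_1,\dots,\ww_r$, the polynomial $\EE\det(xI-C-\sum_j\ww_j\ww_j^{*})=\bigl(\prod_j(1-\partial_{t_j})\bigr)\det(xI-C+\sum_jt_jB_j)\big|_{t=0}$, with $B_j=\EE[\ww_j\ww_j^{*}]$, is real-rooted --- here $\det(xI-C+\sum_jt_jB_j)$ is real stable in $(x,t_1,\dots,t_r)$ and $1-\partial_{t_j}$ preserves real stability --- together with the equivalence between a common interlacing and real-rootedness of all convex combinations. Descending the resulting tree of partial assignments (whose root polynomial is $k^m\mu$), one obtains some $\sigma$ with $\lma(p_\sigma)\le\lma(\mu)$, so it remains to prove $\lma(\mu)\le(1+\sqrt{k\epsilon})^2$.

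This last step is the technical heart, and I expect the barrier-function estimate in it to be the main obstacle. Using $\sum_iA_i=I$, write $\det(xI+\sum_iz_iA_i)=\det(\sum_i(x+z_i)A_i)$, so $\mu(x)=p_m(x\one)$ with $p_0(\ww)=\det(\sum_iw_iA_i)$ real stable and $p_j=(1-\partial_{w_j})p_{j-1}$, and track the barrier functions $\Phi^j_p(\ww)=\partial_{w_j}\log p(\ww)$ along $p_0\to p_1\to\cdots\to p_m$. At $\ww=b\one$ one computes $p_0(b\one)=b^{kd}$ and $\Phi^j_{p_0}(b\one)=\Tr(A_j)/b\le k\epsilon/b$, and $b\one$ lies above all roots of $p_0$ (meaning $p_0>0$ on $b\one+\RR_{\ge0}^{m}$) as soon as $b>0$. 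The key lemma of \cite{MSS2} is that $1-\partial_{w_j}$ preserves real stability and that, if $\ww$ lies above all roots of a real stable $p$ with $\Phi^j_p(\ww)<1$, then for every $\delta\ge\bigl(1-\Phi^j_p(\ww)\bigr)^{-1}$ the point $\ww+\delta\ee_j$ lies above all roots of $(1-\partial_{w_j})p$, and every barrier of $(1-\partial_{w_j})p$ there is at most the corresponding barrier of $p$ at $\ww$.

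Starting from $b\one$ with $b>k\epsilon$ and applying this $m$ times with the common shift $\delta=(1-k\epsilon/b)^{-1}$, the point $(b+\delta)\one$ lies above all roots of $p_m$, whence $\lma(\mu)\le b+\delta$; minimising $b+(1-k\epsilon/b)^{-1}$ over $b>k\epsilon$ gives $b=k\epsilon+\sqrt{k\epsilon}$, $\delta=1+\sqrt{k\epsilon}$, hence $b+\delta=(1+\sqrt{k\epsilon})^2$. Together with the previous paragraph, $S_j=\sigma^{-1}(j)$ satisfies $k\,\bigl\|\sum_{i\in S_j}\vv_i\vv_i^{*}\bigr\|\le(1+\sqrt{k\epsilon})^2$, which is \eqref{sqbound1}. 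In Theorem~\ref{t1}, $\det$ is replaced by an arbitrary hyperbolic polynomial, characteristic polynomials by restrictions to lines in the direction of the hyperbolicity vector, and positive semidefiniteness by membership in the hyperbolicity cone; Steps~1--3 and this barrier estimate then go through essentially verbatim.
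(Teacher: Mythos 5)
Your proposal is correct and follows essentially the same route as the paper: the tensoring reduction, the interlacing-family descent, and the barrier-function estimate you describe are precisely the $h=\det$ instances of the paper's direct-sum construction, compatible-family argument (Theorems~\ref{mixedchar} and~\ref{expfam}), and Theorem~\ref{mainbound}, assembled exactly as in the proofs of Theorems~\ref{hypprob} and~\ref{t1}. The only difference is that you recover the original MSS bound $(1+\sqrt{k\epsilon})^2$ rather than the paper's slightly sharper $\tfrac{1}{k}\delta(k\epsilon,m)$, which comes from the convex-averaging refinement of the barrier step in Corollary~\ref{corbond}.
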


Hyperbolic polynomials are multivariate generalizations of real--rooted polynomials, which have their  origin  in PDE theory where they were studied by Petrovsky, G\aa rding, Bott, Atiyah and H\"ormander, see \cite{ABG,Ga,Horm}.  During recent years  hyperbolic polynomials have been studied in diverse areas such as 
control theory, optimization, real algebraic geometry, probability theory, computer science and combinatorics, see  \cite{Pem,Ren,Vin,Wag} and the references therein. 

A homogeneous polynomial $h(\xx) \in \RR[x_1, \ldots, x_n]$ is \emph{hyperbolic} with respect to a vector 
$\ee \in \RR^n$ if $h(\ee) \neq 0$, and if for all $\xx \in \RR^n$ the univariate polynomial $t \mapsto h(t\ee -\xx)$ has only real 
zeros. Here are some examples of hyperbolic polynomials:
\begin{enumerate}
\item Let $h(\xx)= x_1\cdots x_n$. Then $h(\xx)$ is hyperbolic with respect to any vector $\ee \in \RR_{++}^n=(0,\infty)^n$: 
$$
h(t\ee-\xx) = \prod_{j=1}^n (te_j-x_j).
$$
\item Let $X=(x_{ij})_{i,j=1}^n$ be a matrix of $n(n+1)/2$ variables where we impose $x_{ij}=x_{ji}$. Then $\det(X)$ is hyperbolic with respect to 
$I=\diag(1, \ldots, 1)$. Indeed $t \mapsto \det(tI-X)$ is the characteristic polynomial of the symmetric matrix $X$, so it has only real zeros.

More generally we may consider complex hermitian $Z=(x_{jk}+iy_{jk})_{j,k=1}^n$ (where $i = \sqrt{-1}$) of $n^2$ real variables where we impose $x_{jk}=x_{kj}$ and $y_{jk}=-y_{kj}$, for all $1\leq j,k \leq n$.  Then $\det(Z)$ is a real polynomial which is hyperbolic with respect to $I$.
\item Let $h(\xx)=x_1^2-x_2^2-\cdots-x_n^2$. Then $h$ is hyperbolic with respect to $(1,0,\ldots,0)^T$. 
\end{enumerate}

Suppose $h$ is hyperbolic with respect to $\ee \in \RR^n$. We may write 
\begin{equation}\label{dalambdas}
h(t\ee-\xx) = h(\ee)\prod_{j=1}^d (t - \lambda_j(\xx)),
\end{equation}
where $\lma(\xx)=\lambda_1(\xx) \geq \cdots \geq \lambda_d(\xx)=\lmi(\xx)$ are called the \emph{eigenvalues} of $\xx$ (with respect to $\ee$), and $d$ is the degree of $h$. In particular 
\begin{equation}\label{prolambda}
h(\xx) = h(\ee)\lambda_1(\xx) \cdots \lambda_d(\xx). 
\end{equation}

By homogeneity 
\begin{equation}\label{dilambdas}
\lambda_j(s\xx+t\ee)= 
\begin{cases}
s\lambda_j(\xx)+t &\mbox{ if } s\geq 0 \mbox{ and } \\
s\lambda_{d-j}(\xx)+t &\mbox{ if } s \leq 0
\end{cases},
\end{equation}
for all $s,t \in \RR$ and $\xx \in \RR^n$.

The (open) \emph{hyperbolicity cone} is the set 
$$
\Lambda_{\tiny{++}}= \Lambda_{\tiny{++}}(\ee)= \{ \xx \in \RR^n : \lmi(\xx) >0\}.
$$
We denote its closure by $\Lambda_{\tiny{+}}= \Lambda_{\tiny{+}}(\ee)=\{ \xx \in \RR^n : \lmi(\xx) \geq 0\}$. 
Since $h(t\ee-\ee)=h(\ee)(t-1)^d$ we see that $\ee \in \Lambda_{\tiny{++}}$. The hyperbolicity cones for the examples  above are:
\begin{enumerate}
\item $\Lambda_{\tiny{++}}(\ee)= \RR_{++}^n$. 
\item $\Lambda_{\tiny{++}}(I)$ is the cone of positive definite matrices.
\item $\Lambda_{\tiny{++}}(1,0,\ldots,0)$  is the \emph{Lorentz cone} 
$$
\left\{\xx \in \RR^n : x_1 > \sqrt{x_2^2+\cdots+x_n^2}\right\}.
$$
\end{enumerate}

The following theorem collects a few fundamental facts about hyperbolic polynomials and their hyperbolicity cones. For proofs see \cite{Ga,Ren}. 

\begin{theorem}[G\aa rding, \cite{Ga}]\label{hypfund}
Suppose $h$ is hyperbolic with respect to $\ee \in \RR^n$. 
\begin{enumerate}
\item $\Lambda_+(\ee)$ and $\Lambda_{++}(\ee)$ are convex cones.
\item $\Lambda_{++}(\ee)$ is the connected component of 
$$
\{ \xx \in \RR^n : h(\xx) \neq 0\}
$$
which contains $\ee$. 
\item $\lmi : \RR^n \rightarrow \RR$ is a concave function, and  $\lma : \RR^n \rightarrow \RR$ is a convex function. 
\item If $\ee' \in \Lambda_{++}(\ee)$, then $h$ is hyperbolic with respect to $\ee'$ and $\Lambda_{++}(\ee')=\Lambda_{++}(\ee)$.
\end{enumerate}
\end{theorem}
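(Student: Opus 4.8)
The plan is to reduce the whole statement to a single substantial fact — that $h$ stays hyperbolic with respect to every $\ee' \in \Lambda_{++}(\ee)$ — and to derive everything else by soft arguments combined with the homogeneity relations \eqref{prolambda} and \eqref{dilambdas}. I would treat the four items in the order (2), (4), (1), (3).

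For (2): since the coefficients of $t\mapsto h(t\ee-\xx)$ depend polynomially on $\xx$ and its zeros are always real, the eigenvalue multiset $\{\lambda_1(\xx),\ldots,\lambda_d(\xx)\}$ varies continuously in $\xx$; hence $\lmi$ is continuous and $\Lambda_{++}(\ee)=\{\lmi>0\}$ is open, and by \eqref{prolambda} $h$ does not vanish on it. A one-line computation with homogeneity shows that if $\xx\in\Lambda_{++}(\ee)$ then $(1-\theta)\ee+\theta\xx$ has eigenvalues $(1-\theta)+\theta\lambda_j(\xx)>0$ for $\theta\in[0,1]$, so $\Lambda_{++}(\ee)$ is star-shaped with respect to $\ee$, hence connected, hence contained in the connected component $C$ of $\{h\neq 0\}$ through $\ee$. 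Conversely, along any path in $\{h\neq 0\}$ issuing from $\ee$ the continuous function $\lmi$ never vanishes (again by \eqref{prolambda}) and equals $1$ at $\ee$, so it stays positive; thus $C\subseteq\Lambda_{++}(\ee)$, proving (2). The same computation, via \eqref{dilambdas} applied to $\xx+n^{-1}\ee$, gives $\Lambda_+(\ee)=\overline{\Lambda_{++}(\ee)}$, which I will use below.

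The heart of the matter is G\aa rding's lemma, which yields (4): if $h$ is hyperbolic with respect to $\ee$ and $\ee'\in\Lambda_{++}(\ee)$, then $h$ is hyperbolic with respect to $\ee'$. Granting this, $\Lambda_{++}(\ee')$ is defined and, by (2) applied to both $\ee$ and $\ee'$, it is the component of $\{h\neq 0\}$ through $\ee'$, which also contains $\ee$; since two components coincide or are disjoint, $\Lambda_{++}(\ee')=\Lambda_{++}(\ee)$. To prove the lemma I would fix $\uu\in\RR^n$ and move along $\ee_\mu=(1-\mu)\ee+\mu\ee'\in\Lambda_{++}(\ee)$, $\mu\in[0,1]$. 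Each $z\mapsto h(z\ee_\mu-\uu)$ has degree exactly $d$ (leading coefficient $h(\ee_\mu)\neq 0$), so by Hurwitz's theorem the set $M\subseteq[0,1]$ of parameters for which $h$ is hyperbolic with respect to $\ee_\mu$ is closed, and $0\in M$. Showing that $M$ is also open — equivalently, that $h$ has no zero on the tube $\{\xx+i\yy:\xx\in\RR^n,\ \yy\in\Lambda_{++}(\ee)\}$, from which hyperbolicity with respect to $\ee'$ is immediate because for $\Im z>0$ the vector $z\ee'-\uu$ has imaginary part $(\Im z)\ee'\in\Lambda_{++}(\ee)$ (a cone) and $h$ is real — is the one genuinely hard step, and it is where hyperbolicity is used in an essential, non-local way. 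Here I would follow G\aa rding and induct on $d$: pass to $D_\ee h:=\sum_i e_i\,\partial h/\partial x_i$, which is hyperbolic with respect to $\ee$ of degree $d-1$ and whose hyperbolicity cone contains $\Lambda_{++}(\ee)$ (Rolle's theorem applied to the zeros of $t\mapsto h(t\ee-\xx)$); the inductive hypothesis disposes of $D_\ee h$, and one then recovers $h$ from its $\ee$-derivative via a real-rootedness-preservation argument. I expect this last re-assembly to be the main obstacle to a fully self-contained write-up; it is precisely the content deferred to \cite{Ga,Horm}.

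Finally (1) and (3) follow formally. Given the lemma, for $\xx,\yy\in\Lambda_{++}(\ee)$ apply (4) with $\ee'=\xx$: then $\yy\in\Lambda_{++}(\ee)=\Lambda_{++}(\xx)$, and since $\Lambda_{++}(\xx)$ is star-shaped with respect to $\xx$ the segment $[\xx,\yy]$ lies in $\Lambda_{++}(\xx)=\Lambda_{++}(\ee)$; hence $\Lambda_{++}(\ee)$ is a convex cone, and so is its closure $\Lambda_+(\ee)$, giving (1). For (3): by \eqref{dalambdas} the eigenvalues of $\xx-\lambda\ee$ are $\lambda_j(\xx)-\lambda$, so $\lmi(\xx)=\sup\{\lambda\in\RR:\xx-\lambda\ee\in\Lambda_+(\ee)\}$; because $\Lambda_+(\ee)$ is a convex cone it is closed under addition, which yields the superadditivity $\lmi(\xx+\yy)\geq\lmi(\xx)+\lmi(\yy)$, and together with positive homogeneity (a special case of \eqref{dilambdas}) this makes $\lmi$ concave. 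Since $\lma(\xx)=-\lmi(-\xx)$ by \eqref{dilambdas}, $\lma$ is convex.
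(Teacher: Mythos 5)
The paper does not prove Theorem~\ref{hypfund} at all---it is G\aa rding's theorem, cited from \cite{Ga,Ren}---so there is no internal proof to compare against, but your proposal can be assessed on its own terms. Your reduction is well organized and, modulo item~(4), correct: the continuity-of-roots argument and star-shapedness with respect to $\ee$ for~(2), the identity $\Lambda_+(\ee)=\overline{\Lambda_{++}(\ee)}$, the deduction of convexity in~(1) by replacing $\ee$ with $\xx$ and invoking star-shapedness, and the route to~(3) via $\lmi(\xx)=\sup\{\lambda:\xx-\lambda\ee\in\Lambda_+\}$, superadditivity and positive homogeneity, all go through.

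The genuine gap is exactly where you flag it, in~(4), and I do not think the repair you sketch can be made to work. You correctly reduce~(4) to showing $h$ does not vanish on the tube $\RR^n+i\,\Lambda_{++}(\ee)$ (equivalently, that $t\mapsto h(t\ee'-\uu)$ is real-rooted for each $\ee'\in\Lambda_{++}(\ee)$ and $\uu\in\RR^n$). But the proposed ``pass to $D_\ee h$ and recover $h$'' induction has a structural problem: $D_\ee h$ evaluated along $t\mapsto t\ee'-\uu$ is \emph{not} the $t$-derivative of $t\mapsto h(t\ee'-\uu)$ unless $\ee'=\ee$ (that derivative is $D_{\ee'}h(t\ee'-\uu)$), so knowing $D_\ee h$ is hyperbolic in direction $\ee'$ gives no handle on the roots of $h(t\ee'-\uu)$, and there is no real-rootedness-preserving antidifferentiation to invoke. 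G\aa rding's actual argument is a homotopy root-counting argument, not an induction on degree: fix $\uu$ and for $s>0$ set $q_s(t)=h(t\ee'+is\ee-\uu)$. Hyperbolicity with respect to $\ee$ shows $q_s$ has no real zeros for $s>0$; the factorization $h(\tau\ee'+i\ee)=h(\ee)\prod_{j}\bigl(\lambda_j(\ee')\tau+i\bigr)$ with $\lambda_j(\ee')>0$ shows that for $s\gg 0$ all $d$ zeros of $q_s$ lie in the open lower half-plane; the half-plane zero counts are locally constant in $s$, so all zeros stay in the lower half-plane as $s\to 0^+$; by Hurwitz the zeros of $q_0(t)=h(t\ee'-\uu)$ lie in the closed lower half-plane, and since $q_0$ has real coefficients they are all real. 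That continuity argument is the irreducible content your outline leaves out.
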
 

Recall that the \emph{lineality space}, $L(C)$, of a convex cone $C$ is $C \cap -C$, i.e., the largest linear space contained in $C$. It follows that $L(\Lambda_+)= \{\xx : \lambda_i(\xx)=0 \mbox{ for all } i\}$, see e.g. \cite{Ren}.

The \emph{trace}, \emph{rank} and \emph{spectral radius} (with respect to $\ee$) of  $\xx \in \RR^n$ are defined as for matrices: 
$$
\tr(\xx) = \sum_{i=1}^d\lambda_i(\xx), \ \ \rk(\xx)= \#\{ i : \lambda_i(\xx)\neq 0\} \ \  \mbox{ and } \ \  \|\xx\| = \max_{1\leq i\leq d} |\lambda_i(\xx)|.
$$
Note that $\| \xx \| = \max\{ \lma(\xx), -\lmi(\xx)\}$ and hence $ \| \cdot \|$ is convex by Theorem~\ref{hypfund}~(3). It follows that $\| \cdot \|$ is a seminorm and that $\| \xx \|=0$ if and only if $\xx \in L(\Lambda_+)$. Hence $\| \cdot \|$ is a norm if and only if  $L(\Lambda_+)=\{0\}$. 

The following theorem is a generalization of Theorem \ref{MSSmain} to hyperbolic polynomials. 

\begin{theorem}\label{t1}
Let $k\geq 2$ be an integer and $\epsilon$ a positive real number. Suppose $h$ is hyperbolic  with respect to $\ee \in \RR^n$, and let $\uu_1, \ldots, \uu_m \in \Lambda_{+}$ be such that 
\begin{itemize}
\item[] $\rk(\uu_i) \leq 1$ for all $1\leq i \leq m$,
\item[] $\tr(\uu_i) \leq \epsilon$ for all $1\leq i \leq m$, and 
\item[] $\uu_1+ \uu_2+\cdots+ \uu_m=\ee$. 
\end{itemize}
Then there is a partition of $S_1\cup S_2 \cup \cdots \cup S_k=[m]$ such that 
\begin{equation}\label{sqbound}
\left\| \sum_{i \in S_j} \uu_i  \right\| \leq \frac 1 k \delta(k\epsilon, m),    
\end{equation}
for each $j \in [k]$, where 
$$
\delta(\alpha, m):=\left( 1-\frac 1 m +\sqrt{\alpha - \frac 1 m \left(1-\frac 1 m\right)}\right)^2.
$$
\end{theorem}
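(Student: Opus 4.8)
The plan is to follow the interlacing-polynomials method of Marcus--Spielman--Srivastava, adapted to the hyperbolic setting. The central object is the \emph{expected characteristic polynomial} obtained by considering, for each $i\in[m]$, an independent random choice $\xi_i\in[k]$ with $\PP(\xi_i=j)=1/k$, and forming
\begin{equation*}
q(t) := \EE\left[ \prod_{j=1}^k h\!\left( t\ee - k\sum_{i:\xi_i=j}\uu_i\right)^{1/?}\right],
\end{equation*}
or more precisely the mixed hyperbolic polynomial whose zeros control $\|\sum_{i\in S_j}\uu_i\|$. The argument has three moving parts, and I would carry them out in this order.

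\emph{Step 1: reduction to a largest-root bound.} Using \eqref{dilambdas} and the definition of $\|\cdot\|$, I would first observe that it suffices to find a partition for which $\lma\big(\sum_{i\in S_j}\uu_i\big)$ is small for every $j$; the lower bound $\lmi\geq 0$ is automatic since the $\uu_i$ lie in $\Lambda_+$ and $\Lambda_+$ is a convex cone (Theorem~\ref{hypfund}(1)). So the whole problem becomes: bound the largest $\ee$-eigenvalue of a block sum, uniformly over the $k$ blocks.

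\emph{Step 2: an interlacing family.} The key structural input is that one can build a tree of polynomials, indexed by partial assignments $(\xi_1,\dots,\xi_\ell)$, whose leaves are the actual block characteristic products and whose root is $q$, such that at each node the children's polynomials form a \emph{common interlacing}. In the hyperbolic world this rests on the fact that for $\uu\in\Lambda_+$ with $\rk(\uu)\le 1$, the map $h(\xx)\mapsto h(\xx - s\uu)$ behaves like a rank-one update: $h(\xx-s\uu)$ and $h(\xx)$ differ by a term governed by the (directional) derivative of $h$ in the direction $\uu$, and derivatives of hyperbolic polynomials in directions of the cone are again hyperbolic with interlacing zeros. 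From this one gets that each convex combination $\frac1k\sum_j p_j$ of the $k$ sibling polynomials is again real-rooted and that some sibling has largest root at most that of the average. Iterating from the root down to a leaf produces a partition $S_1,\dots,S_k$ with $\lma(\sum_{i\in S_j}\uu_i)\le \lma(q)$ for all $j$. This is the step I expect to be the main obstacle: verifying the common-interlacing property at each node purely from hyperbolicity (rather than from the matrix identities available in \cite{MSS2}) requires the rank-one structure of the $\uu_i$ and a careful use of the fact that differentiation along a cone direction preserves hyperbolicity and interlaces.

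\emph{Step 3: bounding the largest root of the expected polynomial.} Finally I would estimate $\lma(q)$ using a multivariate barrier-function argument. Writing $q$ via a differential operator applied to $h(t\ee)^{\,?}$ — concretely, $q(t)$ should come out as $\prod_i\big(1 - \tfrac{k}{?}\,\partial_{\uu_i}\big)$ acting on a power of the one-variable polynomial $(t-1)$ evaluated suitably, because each $\uu_i$ has rank one — one tracks the "barrier" $\Phi(t)=q'(t)/q(t)$ and shows that a single differentiation step $p\mapsto(1-\partial_{\uu_i})p$ moves the largest root up by a controlled amount, provided $t$ is above the current barrier threshold and $\tr(\uu_i)=\partial_{\uu_i}\log h(\cdot)$ at $\ee$ is at most $\epsilon$. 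Summing the increments over the $m$ factors, with the constraint $\sum_i\tr(\uu_i)=\tr(\ee)=d\le m$ (whence the $1/m$ corrections), yields exactly $\lma(q)\le \delta(k\epsilon,m)$, and dividing by $k$ after the rescaling in Step 2 gives \eqref{sqbound}. The slight sharpening over \eqref{sqbound1} — the appearance of $1-\tfrac1m$ and the $-\tfrac1m(1-\tfrac1m)$ inside the square root — comes from not discarding the exact value $\tr(\ee)=d$ in this summation, i.e. from using the homogeneity relation rather than a crude bound.
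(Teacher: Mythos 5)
Your outline is in the same spirit as the paper's argument — reduce to a largest-root bound, build an interlacing/compatible family indexed by partial assignments, and then run a barrier-function estimate on the expected polynomial — but there is a genuine gap at the heart of Step 2 that you yourself flag with the ``$1/?$'' and the hedge ``or more precisely \dots''. The issue is how to encode the $k$-way partition as a \emph{single} expected characteristic polynomial to which the machinery applies. Writing $q(t)=\EE\bigl[\prod_{j=1}^k h(t\ee-k\sum_{i:\xi_i=j}\uu_i)\bigr]$ does not directly fit the framework: the $k$ factors in the product share the same random variables $\xi_i$, so the expectation of the product does not split, and the leaves of your tree are products of $k$ polynomials while the node-by-node interlacing lemma is stated for a single hyperbolic polynomial. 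The device the paper uses to close this gap is to work with the hyperbolic polynomial $g(\yy)=h(\xx^1)h(\xx^2)\cdots h(\xx^k)$ in $kn$ disjoint variables, hyperbolic with respect to $\ee^1\oplus\cdots\oplus\ee^k$, and to let the random vector $\XR_j$ take the $k$ values $k\uu_j^i$ (the copy of $k\uu_j$ living in the $i$-th block), each with probability $1/k$. Then $\EE\,\XR_j=\uu_j^1\oplus\cdots\oplus\uu_j^k$, $\EE\sum_j\XR_j=\ee^1\oplus\cdots\oplus\ee^k$, and $\tr(\EE\,\XR_j)=k\tr(\uu_j)\le k\epsilon$, so the probabilistic root bound (Theorem \ref{hypprob}) applies verbatim to $g$, and the direct-sum structure immediately gives $\lma\bigl(\sum_{i\in S_1}k\uu_i^1+\cdots+\sum_{i\in S_k}k\uu_i^k\bigr)=k\max_j\lma\bigl(\sum_{i\in S_j}\uu_i\bigr)$. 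Without this lift to $kn$ variables, the interlacing argument cannot be applied at each node of the tree, and the proof does not go through.

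Two smaller inaccuracies. First, the intermediate object you call $q$ should not carry any fractional power; once one passes to $g$, the relevant expected polynomial is literally $\EE\,g[\XR_1,\ldots,\XR_m](t\ee^1\oplus\cdots\oplus\ee^k+\one)$, and affine linearity of $(\vv_1,\ldots,\vv_m)\mapsto h[\vv_1,\ldots,\vv_m]$ converts the expectation into a single mixed characteristic polynomial $g[\EE\XR_1,\ldots,\EE\XR_m]$ — no exponent is involved. Second, your explanation of where the $1/m$ corrections in $\delta(\epsilon,m)$ come from is off: they do not arise from using the exact value $\tr(\ee)=d$, but from the second part of Corollary \ref{corbond}, where one takes a convex combination (with weights $(1-\tfrac1m)$ and $\tfrac1m$) of the $m$ vectors obtained by running the one-step shrinking argument in each of the $m$ different orders. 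This averaging over orderings, rather than a trace identity, is what produces the $1-\tfrac1m$ and $-\tfrac1m(1-\tfrac1m)$ terms in the bound.
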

We recover (a slightly improved) Theorem \ref{MSSmain} when $h= \det$ in Theorem \ref{t1}.

\section{Compatible families of polynomials}

Let $f$ and $g$ be two real--rooted polynomials of degree $n-1$ and $n$, respectively. We say that $f$ is an \emph{interleaver} of $g$ if 
$$
\beta_1 \leq \alpha_1\leq \beta_2 \leq \alpha_2 \leq \cdots \leq \alpha_{n-1} \leq \beta_n, $$ 
where $\alpha_1 \leq \cdots \leq \alpha_{n-1}$ and $\beta_1 \leq \cdots \leq \beta_{n}$ are the zeros of $f$ and $g$, respectively. 

A family of polynomials $\{f_1(x), \ldots, f_m(x)\}$ of real--rooted polynomials of the same degree and the same sign of leading coefficients  is called \emph{compatible} if it satisfies any of the equivalent conditions in the next theorem. Theorem \ref{CS} has been discovered several times. We refer to \cite[Theorem 3.6]{CS} for a proof. 
\begin{theorem}\label{CS}
Let $f_1(x), \ldots, f_m(x)$ be real--rooted polynomials of the same degree and with positive leading coefficients. The following are equivalent. 
\begin{enumerate}
\item $f_1(x), \ldots, f_m(x)$ have a common interleaver. 
\item for all $p_1, \ldots, p_m \geq 0$, $\sum_{i}p_i=1$, the polynomial
$$
p_1f_1(x)+ \cdots+ p_mf_m(x)
$$
is real--rooted. 
\end{enumerate}
\end{theorem}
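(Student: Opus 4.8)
The plan is to prove the two implications separately. The implication (1)$\Rightarrow$(2) is a sign-counting argument; the reverse implication is where the real work lies, and I would derive it from a root-continuity argument together with a genericity trick involving the Wronskian.

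\emph{(1)$\Rightarrow$(2).} Write $n$ for the common degree. Since the $f_i$ have leading coefficients of the same positive sign, every $F:=\sum_i p_i f_i$ with $p_i\ge 0$, $\sum_i p_i=1$ has degree exactly $n$ with positive leading coefficient — this is exactly where the sign hypothesis is needed. Let $g$ interleave every $f_i$, with zeros $\beta_1\le\cdots\le\beta_{n-1}$, and first treat the generic case that these are distinct and disjoint from the zeros of each $f_i$. The interleaving inequalities then force $f_i$ to have exactly $n-j$ zeros strictly above $\beta_j$, so $\sign f_i(\beta_j)=(-1)^{n-j}$, a value independent of $i$; hence $\sign F(\beta_j)=(-1)^{n-j}$ too, being a convex combination of reals all of that sign. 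Combining this alternating pattern at $\beta_1<\cdots<\beta_{n-1}$ with the behaviour $F(x)\to+\infty$ as $x\to+\infty$ and $\sign F(x)\to(-1)^n$ as $x\to-\infty$, the intermediate value theorem yields at least one zero of $F$ in each of the $n$ intervals into which the $\beta_j$ divide $\RR$; as $\deg F=n$, this accounts for all zeros, so $F$ is real--rooted (and interleaved by $g$). The degenerate cases — repeated zeros of $g$, or zeros shared with some $f_i$ — follow by a routine perturbation argument.

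\emph{(2)$\Rightarrow$(1).} The structural observation is that $\{f_1,\dots,f_m\}$ has a common interleaver if and only if every pair $\{f_i,f_{i'}\}$ does: writing the ordered zeros of $f_i$ as $a^{(i)}_1\le\cdots\le a^{(i)}_n$, a common interleaver of the family exists exactly when $\max_i a^{(i)}_\ell\le\min_i a^{(i)}_{\ell+1}$ for every $\ell\in[n-1]$ (take $\beta_\ell=\max_i a^{(i)}_\ell$, which is automatically nondecreasing in $\ell$) — equivalently, $a^{(i)}_\ell\le a^{(i')}_{\ell+1}$ for all $i,i',\ell$, which is a condition on pairs. Since each convex combination of $f_i$ and $f_{i'}$ is a convex combination of the whole family, (2) passes to every pair, so it suffices to treat $m=2$. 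Let $f,g$ be real--rooted of degree $n$ with positive leading coefficients such that $h_t:=(1-t)f+tg$ is real--rooted for all $t\in[0,1]$, and let $a_1\le\cdots\le a_n$ and $b_1\le\cdots\le b_n$ be their zeros; I must show $a_\ell\le b_{\ell+1}$ and $b_\ell\le a_{\ell+1}$ for every $\ell$. Suppose instead that $a_\ell>b_{\ell+1}$ for some $\ell$ (the other inequality is symmetric). We may assume $f$ and $g$ are not proportional (otherwise any interleaver of $g$ suffices), so the Wronskian $W:=fg'-f'g$ is a nonzero polynomial; choose a point $c$ in the nonempty open interval $(b_{\ell+1},a_\ell)$ avoiding the finitely many zeros of $f$, of $g$, and of $W$. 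Then $f$ has at least $n-\ell+1$ zeros in $(c,\infty)$ while $g$ has at most $n-\ell-1$ there, so $N(t):=\#\{\text{zeros of }h_t\text{ in }(c,\infty)\}$, counted with multiplicity, satisfies $N(0)-N(1)\ge 2$. But the zeros of $h_t$ move continuously with $t\in[0,1]$ (the leading coefficient stays bounded away from $0$), so $N(t)$ can change only at a $t^*$ with $h_{t^*}(c)=0$; the affine function $t\mapsto h_t(c)$ has at most one zero, and since $W(c)\neq 0$, a vanishing of $h_{t^*}$ at $c$ cannot be accompanied by a vanishing of $h_{t^*}'$ there (that would make $(1-t^*,t^*)$ a nonzero null vector of the Wronskian matrix of $f,g$ at $c$). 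Hence the zero of $h_{t^*}$ at $c$ is simple, so $N$ jumps by at most $1$ across $t^*$ — contradicting $N(0)-N(1)\ge 2$.

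The hard part, and the place where the full hypothesis is genuinely used, is this last step: had one only assumed $\tfrac12(f+g)$ real--rooted, or chosen $c$ carelessly, then $N(t)$ could drop by more than $1$ when $h_{t^*}$ acquires a multiple zero at $c$, and the contradiction would evaporate. Keeping all of $h_t$ ($t\in[0,1]$) real--rooted gives the continuity of the ordered zeros throughout the segment, and placing $c$ off the zero set of $W$ is exactly what excludes the multiple zero.
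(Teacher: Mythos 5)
Your proof is correct, and since the paper offers no proof of Theorem~\ref{CS} --- it refers the reader to \cite[Theorem 3.6]{CS} --- you are supplying an argument where the paper only has a citation, so there is nothing on the page to match it against. Your (1)$\Rightarrow$(2) is the standard sign-count at the roots of the common interleaver followed by the intermediate value theorem. Your (2)$\Rightarrow$(1) also follows the usual two-stage plan in the literature (Dedieu, Chudnovsky--Seymour): reduce to pairs, then run a root-continuity argument along the segment from $f$ to $g$. Your max-min reformulation of the pairwise criterion is a tidy way to make the reduction explicit, and the Wronskian choice of $c$ gives a clean, quantitative reason why the zero of $h_{t^*}$ crossing $c$ must be simple --- a point that is often left vague in proofs of this lemma, and exactly the point where the full hypothesis (real-rootedness for all $t\in[0,1]$, not just $t=\tfrac12$) earns its keep, as you rightly emphasize. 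The only place I would ask you to say a bit more is the ``routine perturbation'' handling of the degenerate cases in (1)$\Rightarrow$(2): it does work, but the perturbation must shift the roots of $g$ and of all $m$ polynomials $f_i$ simultaneously so as to preserve the interleaving while separating all roots. One concrete recipe: replace the $j$-th root of $g$ by $\beta_j+j\varepsilon$ and the $j$-th root of each $f_i$ by $a^{(i)}_j+(j-\tfrac12)\varepsilon$ for a small generic $\varepsilon>0$; these perturbed configurations satisfy strict interleaving with pairwise distinct roots, and since the leading coefficient of $\sum_i p_i f_i^\varepsilon$ is bounded below, real-rootedness passes to the limit $\varepsilon\to 0$ by Hurwitz's theorem.
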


\begin{lemma}[\cite{MSS1}]\label{largestz}
Let $f_1,\ldots, f_m$ be real--rooted polynomials 
that have the same degree and positive leading coefficients, and suppose $p_1, \ldots, p_m \geq 0$ sum to one.  If $\{f_1,\ldots, f_m\}$ is compatible, then for some $1 \leq i \leq m$ with $p_i >0$ the largest zero of $f_i$ is smaller or equal to the largest zero of the polynomial 
$$
 f=p_1f_1 + p_2f_2 + \cdots + p_mf_m.
$$\end{lemma}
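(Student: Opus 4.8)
The plan is to argue by contradiction, exploiting the common interleaver guaranteed by compatibility. Suppose that for every $i$ with $p_i>0$ the largest zero of $f_i$ is strictly greater than the largest zero $\mu$ of $f=\sum_i p_i f_i$. Write $f_i(x)=c\prod_{j=1}^n (x-\alpha_j^{(i)})$ with $c>0$ the common leading coefficient (it is the same for all $f_i$ up to normalization; since we only compare largest zeros we may assume the leading coefficients equal). Then for each such $i$ we have $\alpha_n^{(i)}>\mu$, hence $f_i(\mu)$ and the sign of $f_i$ just to the right of $\mu$ differ; more precisely, evaluating at a point $x_0>\mu$ lying to the right of all zeros of $f$ but — and this is where the interleaver enters — to the left of the largest zero of each $f_i$ with $p_i>0$, we would get $f_i(x_0)<0$ for those $i$ while $f(x_0)>0$, contradicting $f(x_0)=\sum_i p_i f_i(x_0)$.

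The key point that makes such an $x_0$ exist is the common interleaver $g$. By Theorem 1.5, $\{f_1,\dots,f_m\}$ compatible means there is a polynomial $g$ of degree $n-1$ interleaving every $f_i$: if $\beta_1\le\cdots\le\beta_{n-1}$ are the zeros of $g$, then $\beta_{n-1}\le\alpha_n^{(i)}$ for all $i$, and also $\alpha_{n-1}^{(i)}\le\beta_{n-1}$ for all $i$. Consequently, on the interval $(\beta_{n-1},\infty)$ each $f_i$ has exactly one zero, namely $\alpha_n^{(i)}$, and $f_i$ is negative on $(\beta_{n-1},\alpha_n^{(i)})$ and positive on $(\alpha_n^{(i)},\infty)$. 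The same interleaving applies to the convex combination: $g$ interleaves $f$ as well (a convex combination of polynomials all interleaved by the same $g$ is again interleaved by $g$ — this is essentially the content of Theorem 1.5 applied together with the fact that $f$ is real-rooted of degree $n$), so $\mu=\alpha_n(f)\ge\beta_{n-1}$ and $f$ is negative on $(\beta_{n-1},\mu)$, positive on $(\mu,\infty)$. Now pick $x_0$ with $\beta_{n-1}\le\mu<x_0<\min\{\alpha_n^{(i)}: p_i>0\}$, which is possible precisely under the contradiction hypothesis. Then $f(x_0)>0$ but $f_i(x_0)\le 0$ for every $i$ with $p_i>0$ (strictly negative for those with $\alpha_n^{(i)}>x_0$, which includes all of them), so $f(x_0)=\sum_{i:p_i>0} p_i f_i(x_0)\le 0$, a contradiction.

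The main obstacle is making the sign bookkeeping on the rightmost interval fully rigorous — in particular, confirming that $g$ interleaves $f$ (so that $f$ too has its largest zero at or to the right of $\beta_{n-1}$ and is positive beyond it), and handling the boundary case $\mu=\beta_{n-1}$ as well as possible multiplicities among the $\alpha_j^{(i)}$. For the first, I would note that by Theorem 1.5 the common interleaver property is equivalent to real-rootedness of all nonnegative convex combinations; since $g$ interleaves each $f_i$, standard facts about interleaving (e.g., that $f$ and $g$ interleave iff $g$ interleaves $f_i$ for all $i$ in a compatible family, or a direct argument via the intermediate value theorem using that $g(\beta_{n-1})=0$ and the signs of $f$ at the $\beta_j$) give that $g$ interleaves $f$. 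Alternatively one can avoid $g$ altogether and argue directly: real-rootedness of $f$ forces $f>0$ on $(\mu,\infty)$, while $\alpha_n^{(i)}>\mu$ for all relevant $i$ forces, after checking there is no zero of $f_i$ strictly between $\mu$ and $\alpha_n^{(i)}$ using the common-interleaver bound $\alpha_{n-1}^{(i)}\le\mu$, that $f_i<0$ on $(\mu,\alpha_n^{(i)})$; choosing $x_0$ in the common part of these intervals yields the contradiction. I would present this direct version, as it is the shortest path to the conclusion.
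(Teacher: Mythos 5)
Your argument is correct, but you take the contrapositive route where the paper argues directly, and this makes your version a bit longer than it needs to be. Both proofs hinge on the same key observation: if $\alpha=\beta_{n-1}$ is the largest zero of the common interleaver, then $f_i(\alpha)\leq 0$ for every $i$ (since $\alpha_{n-1}^{(i)}\leq\alpha\leq\alpha_n^{(i)}$ and the leading coefficients are positive), whence $f(\alpha)\leq 0$ and so the largest zero $\beta$ of $f$ satisfies $\beta\geq\alpha$. The paper then simply evaluates at $\beta$: since $0=f(\beta)=\sum p_if_i(\beta)$, some $i$ with $p_i>0$ has $f_i(\beta)\geq 0$, and because $f_i$ is nonpositive on $[\alpha,\alpha_n^{(i)})$ this forces $\alpha_n^{(i)}\leq\beta$, done. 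No contradiction, no auxiliary $x_0$, no need to verify that $g$ interleaves $f$.

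Two specific comments on your write-up. First, the step where you argue that $g$ interleaves $f$ is extra machinery: what you actually need is only the scalar inequality $\mu\geq\beta_{n-1}$, and that follows directly from $f(\beta_{n-1})\leq 0$ together with the positivity of the leading coefficient of $f$. Second, your closing claim that the ``direct version'' avoids $g$ altogether is not accurate: the inequality $\alpha_{n-1}^{(i)}\leq\mu$ you invoke there is obtained by chaining $\alpha_{n-1}^{(i)}\leq\beta_{n-1}$ (from the interleaver) with $\beta_{n-1}\leq\mu$ (which needs the argument just described), so the interleaver is still doing the work. With that clarified, your proof is sound; it just packages the same two inequalities into a contradiction rather than reading them off at $\beta$ directly.
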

\begin{proof}
If $\alpha$ is the largest zero of the common interleaver, then $f_i(\alpha) \leq 0$ for all $i$, so that the largest zero, $\beta$,  of $f(x)$ is located in the interval $[\alpha, \infty)$, as are the largest zeros of $f_i$ for each $1\leq i \leq m$. Since $f(\beta)=0$, there is an index $i$ with $p_i >0$ such that $f_i(\beta) \geq 0$. Hence the largest zero of $f_i$ is at most $\beta$. 
\end{proof}

\begin{definition}
Suppose $S_1, \ldots, S_m$ are finite sets. A family of polynomials, $\{f(\ss;t)\}_{\ss \in S_1 \times \cdots \times S_m}$, for which all non-zero members are of the same degree and have the same signs of their leading coefficients
 is called \emph{compatible} 
if for all choices of independent random variables 
$\XR_1 \in S_1, \ldots, \XR_m \in S_m$, the polynomial 
$
\EE f(\XR_1,\ldots, \XR_n;t) 
$
is real--rooted. 
\end{definition}

The notion of compatible families of polynomials is less general than that of \emph{interlacing families of polynomials} in \cite{MSS1,MSS2}. However since all families appearing here (and in \cite{MSS1,MSS2}) are compatible we find it more convenient to work with these. The following theorem is in essence from \cite{MSS1}. 

\begin{theorem}\label{expfam}
Let $\{f(\ss;t)\}_{\ss \in S_1 \times \cdots \times S_m}$ be a compatible family, and let \ $\XR_1 \in S_1, \ldots, \XR_m \in S_m$ be independent random variables such that $\EE f(\XR_1,\ldots, \XR_m;t) \not \equiv 0$. Then there is a tuple $\ss=(s_1, \ldots, s_n) \in S_1 \times \cdots \times S_m$, with $\PP[\XR_i=s_i]>0$ for each $1\leq i \leq m$, such that the largest zero of $f(s_1,\ldots, s_m;t)$ is smaller or equal to the largest zero of 
$\EE f(\XR_1,\ldots, \XR_m;t)$. 
\end{theorem}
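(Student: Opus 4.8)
The plan is to use the method of conditional expectations from \cite{MSS1}, fixing the values of $\XR_1,\ldots,\XR_m$ one coordinate at a time so that the largest zero never increases. For $0\le i\le m$ and a partial tuple $(s_1,\ldots,s_i)$ with $\PP[\XR_j=s_j]>0$ for all $j\le i$, set
$$
g_{(s_1,\ldots,s_i)}(t):=\EE f(s_1,\ldots,s_i,\XR_{i+1},\ldots,\XR_m;t),
$$
so that $g_{()}=\EE f(\XR_1,\ldots,\XR_m;t)$ and $g_{(s_1,\ldots,s_m)}=f(s_1,\ldots,s_m;t)$. Since the $\XR_j$ are independent, conditioning on $\XR_{i+1}$ gives, for $0\le i\le m-1$, the decomposition
$$
g_{(s_1,\ldots,s_i)}(t)=\sum_{s\in S_{i+1}}\PP[\XR_{i+1}=s]\,g_{(s_1,\ldots,s_i,s)}(t),
$$
a convex combination of the polynomials obtained by fixing one further coordinate.

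First I would record that, for fixed $(s_1,\ldots,s_i)$, every convex combination $\sum_{s}p_s\,g_{(s_1,\ldots,s_i,s)}$ with $p_s\ge 0$ and $\sum_s p_s=1$ is real--rooted: it equals $\EE f(\XR_1',\ldots,\XR_m';t)$ where $\XR_1'=s_1,\ldots,\XR_i'=s_i$ are constant, $\XR_{i+1}'$ has law $(p_s)_s$, and $\XR_j'=\XR_j$ for $j>i+1$; these random variables are independent, so compatibility of the given family applies. Moreover a convex combination of nonzero polynomials of a common degree $D$ whose leading coefficients have a common sign (positive after replacing $f$ by $-f$ if necessary, which changes no zeros) again has degree $D$ and positive leading coefficient; hence every nonzero $g_{(s_1,\ldots,s_i,s)}$ has degree $D$ and positive leading coefficient. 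Therefore, putting $T=\{s\in S_{i+1}:\PP[\XR_{i+1}=s]>0,\ g_{(s_1,\ldots,s_i,s)}\not\equiv 0\}$, the family $\{g_{(s_1,\ldots,s_i,s)}:s\in T\}$ satisfies the hypotheses of Lemma~\ref{largestz}, and $g_{(s_1,\ldots,s_i)}=\sum_{s\in T}\PP[\XR_{i+1}=s]\,g_{(s_1,\ldots,s_i,s)}$ because the discarded terms vanish.

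Next I would run the induction on $i$, starting from $g_{()}\not\equiv 0$, which holds by hypothesis. If $g_{(s_1,\ldots,s_i)}\not\equiv 0$ then $T\neq\emptyset$; renormalizing the weights $\PP[\XR_{i+1}=s]$, $s\in T$, to sum to one (which only rescales $g_{(s_1,\ldots,s_i)}$ by a positive constant and hence leaves its largest zero unchanged) and applying Lemma~\ref{largestz}, we obtain some $s_{i+1}\in T$ --- in particular with $\PP[\XR_{i+1}=s_{i+1}]>0$ --- such that $g_{(s_1,\ldots,s_i,s_{i+1})}$ is nonzero and has largest zero at most that of $g_{(s_1,\ldots,s_i)}$, so the induction continues. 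After $m$ steps we obtain a tuple $\ss=(s_1,\ldots,s_m)$ with $\PP[\XR_i=s_i]>0$ for all $i$, and chaining the inequalities shows that the largest zero of $f(s_1,\ldots,s_m;t)=g_{(s_1,\ldots,s_m)}$ is at most that of $g_{()}=\EE f(\XR_1,\ldots,\XR_m;t)$, as desired.

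This is the standard interlacing-families induction, so the only real work --- and the main point where care is needed --- is the bookkeeping around identically zero members: isolating the subset $T$, checking that the surviving polynomials share a common degree and leading-coefficient sign and that real-rootedness of convex combinations is inherited by the restricted family, so that Lemma~\ref{largestz} applies verbatim at each step.
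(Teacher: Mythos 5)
Your proof is correct and follows essentially the same route as the paper's: decompose $\EE f$ as a convex combination by conditioning on one coordinate, invoke Theorem~\ref{CS} to see the resulting one-variable family is compatible, apply Lemma~\ref{largestz} to pick a value of that coordinate not increasing the largest zero, and iterate. The paper presents this as a backward induction on $m$ (fixing $\XR_m$ first) while you run a forward greedy pass (fixing $\XR_1$ first), and you spell out explicitly the bookkeeping about discarding identically-zero members and renormalizing weights, which the paper handles implicitly in the phrase ``there is an index $j$ with $q_j>0$ such that $\EE f(\XR_1,\ldots,\XR_{m-1},c_j;t)\not\equiv 0$''; these are cosmetic differences, not a different argument.
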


\begin{proof}
The proof is by induction over $m$. The case when $m=1$ is Lemma \ref{largestz}, so suppose $m>1$.  If 
$S_m=\{c_1,\ldots, c_k\}$, then  
$$
\EE f(\XR_1,\ldots, \XR_m;t)= \sum_{i=1}^k q_i \EE f(\XR_1,\ldots, \XR_{m-1}, c_i;t), 
$$
for some $q_i \geq 0$. However 
$$
\sum_{i=1}^k p_i \EE f(\XR_1,\ldots, \XR_{m-1}, c_i;t)
$$
is real--rooted for all choices of $p_i \geq 0$ such that $\sum_ip_i=1$. By Lemma \ref{largestz} and Theorem \ref{CS} there is an index $j$ with $q_j>0$ such that $\EE f(\XR_1,\ldots, \XR_{m-1}, c_j;t) \not \equiv 0$ and such that the largest zero of $\EE f(\XR_1,\ldots, \XR_{m-1}, c_j;t)$ is no larger than the largest zero of $\EE f(\XR_1,\ldots, \XR_m;t)$. The theorem now follows by induction. 
 \end{proof}

 \section{Mixed hyperbolic polynomials}
 
 Recall that the \emph{directional derivative} of $h(\xx) \in  \RR[x_1,\ldots, x_n]$ with respect to $\vv =(v_1,\ldots, v_n)^T \in \RR^n$ is defined as 
 $$
 D_\vv h(\xx) := \sum_{k=0}^n v_k \frac{ \partial h }{\partial x_k}(\xx),
 $$
 and note that 
 \begin{equation}\label{dvalt}
  (D_\vv h)(\xx+t\vv) = \frac d {dt} h(\xx+ t \vv) . 
\end{equation}
If $h$ is hyperbolic with respect to $\ee$, then 
$$
\tr(\vv)= \frac {D_\vv h(\ee)}{h(\ee)},
$$
by \eqref{dalambdas}. Hence $\vv \rightarrow \tr(\vv)$ is linear.

The following theorem is essentially known, see e.g. \cite{BGLS,Ga,Ren}. However we need slightly more general results, so we provide proofs below, when necessary. 
\begin{theorem}\label{direct}
Let $h$ be a hyperbolic polynomial and let $\vv \in \Lambda_+$ be such that   $D_\vv h \not \equiv 0$. Then 
\begin{enumerate}
\item $D_\vv h$ is hyperbolic with hyperbolicity cone containing $\Lambda_{++}$.  
\item The polynomial $h(\xx)-yD_\vv h(\xx) \in \RR[x_1,\ldots, x_n,y]$ is hyperbolic with hyperbolicity cone containing $\Lambda_{++} \times \{y: y \leq 0\}$. 
\item The rational function 
$$
\xx \mapsto \frac {h(\xx)}{D_\vv h(\xx)}
$$
is concave on $\Lambda_{++}$. 
\end{enumerate}
\end{theorem}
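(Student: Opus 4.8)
The plan is to deduce part (1) from the elementary fact that the derivative of a real--rooted polynomial interleaves it, then part (2) from part (1) together with a computation of $D_\vv h(t\ee-\xx)$, and finally part (3) from part (2) by a convexity argument.

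For (1), first suppose $\vv\in\Lambda_{++}$. By Theorem~\ref{hypfund}~(4), $h$ is hyperbolic with respect to $\vv$ and $\Lambda_{++}(\vv)=\Lambda_{++}$. By \eqref{dvalt}, the polynomial $t\mapsto(D_\vv h)(t\vv-\yy)=\frac{d}{dt}h(t\vv-\yy)$ is the derivative of a real--rooted polynomial, hence real--rooted with zeros interleaving those of $h(t\vv-\yy)$; moreover $(D_\vv h)(\vv)=d\cdot h(\vv)\neq0$ by Euler's identity. Thus $D_\vv h$ is hyperbolic with respect to $\vv$, and the interleaving shows that if the smallest eigenvalue of $\yy$ with respect to $h$ is positive then so is the smallest eigenvalue of $\yy$ with respect to $D_\vv h$; that is, $\Lambda_{++}\subseteq\Lambda_{++}(D_\vv h,\vv)$. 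Since $\ee$ lies in this cone, Theorem~\ref{hypfund}~(4) applied to $D_\vv h$ shows that $D_\vv h$ is hyperbolic with respect to $\ee$ with the same hyperbolicity cone. For a general $\vv\in\Lambda_+$ with $D_\vv h\not\equiv0$ I would write $\vv$ as the limit of $\vv+\tau\ee\in\Lambda_{++}$ as $\tau\to0^+$, apply the above to each $D_{\vv+\tau\ee}h=D_\vv h+\tau D_\ee h$, and pass to the limit: a nonzero limit of polynomials hyperbolic with respect to $\ee$ is again hyperbolic with respect to $\ee$ (by continuity of the zeros), so $D_\vv h$ is hyperbolic with respect to $\ee$; and since $\Lambda_{++}\subseteq\Lambda_{++}(D_{\vv+\tau\ee}h,\ee)$ for all $\tau>0$, continuity of the smallest--eigenvalue map gives $\Lambda_{++}\subseteq\Lambda_{+}(D_\vv h,\ee)$, whence $\Lambda_{++}\subseteq\Lambda_{++}(D_\vv h,\ee)$ because an open set contained in the closure of an open convex cone is contained in that cone.

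For (2), set $H(\xx,y)=h(\xx)-yD_\vv h(\xx)$. Since $H(\ee,0)=h(\ee)\neq0$, it suffices to prove that $t\mapsto H(t\ee-\xx,-y)=h(t\ee-\xx)+yD_\vv h(t\ee-\xx)$ is real--rooted for all $\xx\in\RR^n$ and $y\in\RR$. Writing $P(t)=h(t\ee-\xx)$ and $R(t)=D_\vv h(t\ee-\xx)$, the crux is the claim that $R$ interleaves $P$ for every $\xx$; granting this, $P(t)+yR(t)$ is real--rooted for every $y\in\RR$ by the classical fact that a polynomial and one of its interleavers span a pencil of real--rooted polynomials, and hence $H$ is hyperbolic with respect to $(\ee,0)$. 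To prove the claim it suffices, by continuity of the zeros, to treat $\xx$ in the dense open set where the multiplicity pattern of the spectrum is locally constant and the eigenvalue maps $\lambda_j$ are differentiable; differentiating $h(t\ee-\xx+s\vv)=h(\ee)\prod_i\bigl(t-\lambda_i(\xx-s\vv)\bigr)$ at $s=0$ gives
\[
R(t)=h(\ee)\sum_{j=1}^{d}\bigl(D_\vv\lambda_j(\xx)\bigr)\prod_{i\neq j}\bigl(t-\lambda_i(\xx)\bigr),
\]
and the monotonicity of the eigenvalue maps with respect to the order induced by $\Lambda_+$ (see \cite{BGLS}) gives $D_\vv\lambda_j(\xx)\geq0$; evaluating at the zeros of $P$ then yields $\sign R(\lambda_j(\xx))\in\{0,(-1)^{j-1}\sign h(\ee)\}$, which forces $R$, of degree at most $d-1$, to interleave $P$. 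For the hyperbolicity cone I would first note that $\tr(\vv)>0$: otherwise $D_\vv h(\ee)=h(\ee)\tr(\vv)=0$, which is impossible for the polynomial $D_\vv h$ that is, by part (1), hyperbolic with respect to $\ee$, unless $D_\vv h\equiv0$. Hence $R$ has degree exactly $d-1$ with leading coefficient $h(\ee)\tr(\vv)$, so for $\xx\in\Lambda_{++}$ and $y<0$ the zeros of $P(t)+yR(t)$ consist of $d-1$ zeros trapped in $[\lmi(\xx),\lma(\xx)]\subset(0,\infty)$ together with one escaping zero asymptotic to $-y\,\tr(\vv)>0$; all zeros are then positive, so $(\xx,y)\in\Lambda_{++}(H,(\ee,0))$, and taking closures gives the asserted containment $\Lambda_{++}\times\{y\leq0\}\subseteq\Lambda_{+}(H,(\ee,0))$.

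Finally, (3) follows from (2). Normalizing so that $h(\ee)>0$, we have $h>0$ on $\Lambda_{++}$ and, by part (1) and $\tr(\vv)>0$, also $D_\vv h>0$ on $\Lambda_{++}$, so for $\xx\in\Lambda_{++}$ we have $H(\xx,y)>0$ if and only if $y<h(\xx)/D_\vv h(\xx)$. Since $\Lambda_{++}(H,(\ee,0))$ is the component of $\{H\neq0\}$ containing $(\ee,0)$ (on which $H>0$), and since $\{(\xx,y):\xx\in\Lambda_{++},\ y<h(\xx)/D_\vv h(\xx)\}$ is connected, contains $(\ee,0)$, and lies in $\{H>0\}$, we obtain
\[
\Lambda_{++}(H,(\ee,0))\cap\bigl(\Lambda_{++}\times\RR\bigr)=\bigl\{(\xx,y):\xx\in\Lambda_{++},\ y<h(\xx)/D_\vv h(\xx)\bigr\}.
\]
The left--hand side is convex, being an intersection of two convex sets, and the region $\{(\xx,y):\xx\in U,\ y<\varphi(\xx)\}$ over a convex open set $U$ is convex precisely when $\varphi$ is concave; hence $\xx\mapsto h(\xx)/D_\vv h(\xx)$ is concave on $\Lambda_{++}$. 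The step I expect to be the main obstacle is the interleaving claim in part (2): establishing it for \emph{every} $\xx$, in particular on the locus where eigenvalues collide (or where $h$ fails to be square--free), is precisely where the cone--monotonicity of hyperbolic eigenvalues is needed, and the rest is bookkeeping around that fact and around continuity of zeros.
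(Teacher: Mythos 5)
Your argument is correct but takes a genuinely different route from the paper for parts (1) and (2). For (1) the paper simply cites a reference, whereas you supply a self-contained proof via Rolle's theorem along the line $t\mapsto t\vv-\yy$, the substitution $\ee\in\Lambda_{++}(D_\vv h,\vv)$ to pass from $\vv$ to $\ee$, and an approximation for boundary directions. The real divergence is in (2): you prove directly that $R(t)=D_\vv h(t\ee-\xx)$ interleaves $P(t)=h(t\ee-\xx)$ for every $\xx$, by differentiating the eigenvalue factorization and invoking cone--monotonicity of the $\lambda_j$, and then read off the cone inclusion from the interleaving. The paper instead gets (2) as an almost immediate corollary of (1): $h(\xx)y$ is hyperbolic with cone containing $\Lambda_{++}\times\{y<0\}$, and $h-yD_\vv h=-D_{(\vv,-1)}\bigl(h(\xx)y\bigr)$, so (1) applied in the enlarged space gives everything at once; $\Lambda_{++}\times\{0\}$ is then swept in via the connected--component characterization of the cone. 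Your version is more explicit and elementary, and makes the interleaving visible; the paper's is a clean one--liner once (1) is available. Part (3) is essentially the same in both: the slice of the closed cone over $\Lambda_{++}$ is the hypograph of $h/D_\vv h$, and convexity of the cone is concavity of the function.

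Two small points worth tightening. First, in (1) you appeal to the general principle that a nonzero limit of polynomials hyperbolic with respect to $\ee$ is again hyperbolic with respect to $\ee$; as stated this is false (the value at $\ee$ can vanish in the limit --- e.g.\ $x_2+x_1/n\to x_2$ with $\ee=(1,0)$). It is true in your situation because $D_\vv h(\ee)=h(\ee)\tr(\vv)\neq 0$, but you only note $\tr(\vv)>0$ later, inside the proof of (2); it should be established before the limiting step in (1). Second, the ``escaping zero asymptotic to $-y\tr(\vv)$'' description is the behaviour as $y\to-\infty$, not for every fixed $y<0$; a cleaner way to locate the zeros is to observe that for $t<\lmi(\xx)$ the interleaver $R$ and $P$ have opposite signs, so $P(t)+yR(t)\neq 0$ for all $y\leq 0$, forcing every zero of $P+yR$ into $[\lmi(\xx),\infty)\subset(0,\infty)$ when $\xx\in\Lambda_{++}$. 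Neither point changes the substance of the argument.
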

\begin{proof}
(1). See \cite[Lemma 4]{BrOp}.

(2). The polynomial $h(\xx)y$ is hyperbolic with hyperbolicity cone containing $\Lambda_{++} \times \{y : y<0\}$. Hence so is $H(\xx,y):= D_{(\vv,-1)} h(\xx)y= h(\xx)- y D_\vv h(\xx)$ by (1). Since $H(\ee',0) = h(\ee') \neq 0$ for each $\ee' \in \Lambda_{++}$, we see that also $\Lambda_{++}\times \{0\}$ is a subset of the hyperbolicity cone (by Theorem \ref{hypfund} (2)) of $H$.

(3). If $\xx \in \Lambda_{++}$, then (by Theorem \ref{hypfund} (2)) $(\xx,y)$ is in the closure of the hyperbolicity cone of $H(\xx,y)$ if and only if 
$$
y \leq \frac {h(\xx)}{D_\vv h(\xx)}.
$$
Since hyperbolicity cones are convex  
$$
y_1 \leq \frac {h(\xx_1)}{D_\vv h(\xx_1)} \mbox{ and } y_2 \leq \frac {h(\xx_2)}{D_\vv h(\xx_2)} \mbox{ imply } y_1+y_2 \leq \frac {h(\xx_1+\xx_2)}{D_\vv h(\xx_1+\xx_2)}, 
$$
for all $\xx_1,\xx_2 \in \Lambda_{++}$, 
from which (3) follows. 
\end{proof}

\begin{lemma}\label{rankalt}
Let $h$ be hyperbolic with hyperbolicity cone $\Lambda_{++}\subseteq \RR^n$. The rank function does not depend on the choice of $\ee \in \Lambda_{++}$, and 
$$
\rk(\vv)= \max\{ k : D_\vv^kh \not \equiv 0\}, \quad \mbox{ for all } \vv \in \RR^n.
$$
\end{lemma}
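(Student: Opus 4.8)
The plan is to prove the two claims together by reducing everything to the univariate behaviour of $t \mapsto h(\vv + t\ee)$ and its derivatives. First I would observe that, since $h$ is hyperbolic with respect to any $\ee \in \Lambda_{++}$ and all these hyperbolic polynomials share the same hyperbolicity cone by Theorem~\ref{hypfund}~(4), the eigenvalues of $\vv$ with respect to $\ee$ are the zeros of the real-rooted univariate polynomial $t \mapsto h(t\ee - \vv)/h(\ee)$ counted with multiplicity; hence the number of \emph{nonzero} eigenvalues, $\rk(\vv)$, is $d$ minus the multiplicity of $0$ as a zero of this polynomial. Equivalently, writing $p(t) = h(\vv + t\ee)$, we have $\rk(\vv) = d - m_0$, where $m_0 = \mathrm{ord}_{t=0}\, p(t)$ is the order of vanishing of $p$ at $t=0$.

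Next I would connect $D_\vv^k h$ to derivatives of $p$. Using \eqref{dvalt} repeatedly (with the roles of $\ee$ and $\vv$ interchanged), for any $\ee \in \Lambda_{++}$ we get $(D_\vv^k h)(\vv + t\ee) = \frac{d^k}{ds^k}\big|_{s=0} h(\vv + t\ee + s\vv) = \frac{d^k}{ds^k}\big|_{s=0} h((1+s)\vv + t\ee)$; by homogeneity $h((1+s)\vv + t\ee) = (1+s)^d\, h\!\big(\vv + \tfrac{t}{1+s}\ee\big) = (1+s)^d\, p\!\big(\tfrac{t}{1+s}\big)$, and differentiating $k$ times in $s$ at $s=0$ shows that $(D_\vv^k h)(\vv + t\ee)$ is, up to a nonzero triangular change of variables, a $\ZZ$-linear combination of $t^j p^{(j)}(t)$ for $j \le k$ — in particular its order of vanishing at $t=0$ is at least $m_0 - k$, with equality governed by whether $p^{(k)}(0) = 0$. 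Cleaner: a direct computation gives that $(D_\vv^k h)(\vv)$ equals $k!$ times the coefficient of $s^k$ in $(1+s)^d p(0/(1+s))$... here I must be slightly careful since $p(0)$ is just a constant; the honest statement, obtained by instead evaluating $(D_\vv^k h)(\vv)$ via $h(\vv + s\vv) = (1+s)^d h(\vv)$, is $(D_\vv^k h)(\vv) = d(d-1)\cdots(d-k+1)\, h(\vv)$, which is unhelpful. So the right move is to evaluate at a point where the $\ee$-direction survives.

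Therefore the key step is: show $D_\vv^k h \not\equiv 0$ if and only if $\deg p \ge$ ... no — rather, I claim $D_\vv^k h \equiv 0 \iff D_\vv^k h(\ee') = 0$ for all $\ee'$, and then use the one-variable restriction along $\ee'$. Concretely, fix $\ee' \in \Lambda_{++}$ and set $q(t) = h(t\vv + \ee')$; by \eqref{dvalt} again, $(D_\vv^j h)(t\vv + \ee') = q^{(j)}(t)$, so $D_\vv^k h \equiv 0$ forces $q^{(k)} \equiv 0$, i.e. $\deg q \le k-1$. Conversely if $\deg q \le k-1$ for \emph{every} $\ee'$ then $D_\vv^k h$ vanishes on the open set $\Lambda_{++}$ (translated), hence identically. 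So $\max\{k : D_\vv^k h \not\equiv 0\} = \deg_t h(t\vv + \ee')$ for generic — in fact any — $\ee' \in \Lambda_{++}$. Finally I would identify this degree with $\rk(\vv)$: the polynomial $h(t\vv + \ee')$ has degree $d$ in $t$ when $\vv \in \Lambda_{++}$ has full rank, and in general its degree is exactly the number of nonzero eigenvalues of $\vv$ — this follows because $h(t\vv+\ee') = h(\ee')\prod_j(1 + t\mu_j)$ where... more carefully, by Theorem~\ref{direct}~(1) each $D_\vv^j h$ is hyperbolic with cone containing $\Lambda_{++}$, and $D_\vv^{\rk(\vv)+1} h$ must vanish since on the line $\ee' + \RR\vv$ the polynomial $h$ restricts to something of degree $\rk(\vv)$ — the leading behaviour of $h(t\vv+\ee')$ as $t\to\infty$ is controlled by the highest-degree-in-$t$ part, which is $\prod$ over the nonzero eigenvalues. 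I would make this precise by diagonalising: choosing coordinates adapted to $\vv$ is not available in the hyperbolic setting, so instead I would argue via the factorisation $h(s\ee' - t\vv)/h(\ee')$ as a polynomial in $s$ whose roots are $t\lambda_j(\vv;\ee')$ shifted, track which roots go to infinity, and read off the degree in $t$.

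The main obstacle I anticipate is precisely this last identification — proving rigorously that the $t$-degree of $h(t\vv + \ee')$ equals $\rk(\vv)$ — since without a spectral decomposition one cannot simply "diagonalise $\vv$." The clean route is: $\rk(\vv)$ is the order of vanishing at $t=0$ of $t\mapsto h(t\ee'-\vv)$ viewed the other way, and there is a standard lemma (e.g. in \cite{Ren}) that $D_\vv^k h(\ee')$ is, up to sign and a positive constant, the $k$-th symmetric function of the eigenvalues of $\vv$ relative to $\ee'$ — so it is nonzero for $k \le \rk(\vv)$ (take $\ee' = \ee$ and use that $e_k(\lambda_1,\dots,\lambda_d)$ with at least $k$ nonzero, all same sign by $\vv\in\Lambda_+$, is nonzero) and zero for $k > \rk(\vv)$. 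Establishing the independence of $\rk$ from $\ee \in \Lambda_{++}$ then comes for free, since the characterisation $\rk(\vv) = \max\{k : D_\vv^k h \not\equiv 0\}$ makes no reference to $\ee$.
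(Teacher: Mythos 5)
Your overall plan is pointed in the right direction, and you correctly locate the key tool: the coefficients $D_\vv^k h(\ee)$ are (up to sign and a nonzero constant) the elementary symmetric functions of the eigenvalues of $\vv$, which is exactly what the paper uses via the expansion
$h(\ee - t\vv) = h(\ee)\prod_{j=1}^d\bigl(1 - t\lambda_j(\vv)\bigr) = \sum_{k=0}^d (-1)^k \tfrac{D_\vv^k h(\ee)}{k!}\,t^k$,
a direct consequence of \eqref{mag} and the definition of the eigenvalues. However, your proposal has three concrete problems.

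First, you flag the identity $\deg_t h(t\vv + \ee') = \rk_{\ee'}(\vv)$ as ``the main obstacle'' and speculate about diagonalising $\vv$. No such thing is needed: the factorisation above shows at once that the $t$-degree equals the number of nonzero $\lambda_j(\vv)$, i.e.\ $\rk_{\ee'}(\vv)$. This is a one-liner, not an obstacle.

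Second, your justification ``$e_k(\lambda_1,\dots,\lambda_d)$ is nonzero for $k\leq \rk(\vv)$, all same sign by $\vv\in\Lambda_+$'' does not apply, because the lemma is stated for \emph{all} $\vv\in\RR^n$, not only $\vv\in\Lambda_+$. Indeed the claim is false in general: for $h = x_1^2 - x_2^2$, $\ee=(1,0)$, $\vv=(0,1)$ one has $\lambda_1=1$, $\lambda_2=-1$, $\rk(\vv)=2$, but $e_1=0$ so $D_\vv h(\ee)=0$. What is actually needed (and true) is only that $D_\vv^{\rk_\ee(\vv)} h(\ee)\neq 0$, as the leading coefficient of $h(\ee - t\vv)$, together with $D_\vv^k h(\ee) = 0$ for $k > \rk_\ee(\vv)$; i.e.\ $\rk_\ee(\vv) = \max\{k : D_\vv^k h(\ee)\neq 0\}$.

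Third, the logic around ``independence of $\rk$ comes for free'' is circular as you have set it up. Your argument yields $\max\{k : D_\vv^k h \not\equiv 0\} = \max_{\ee'\in\Lambda_{++}} \rk_{\ee'}(\vv)$, but to replace the max over $\ee'$ by ``any fixed $\ee'$'' you already need to know the rank is independent of the base point. The paper resolves this the other way around: it \emph{quotes} the independence of $\rk$ from $\ee\in\Lambda_{++}$ (from \cite{Ren} and \cite{BrObs}), then uses it to pass from $D_\vv^{r+1} h(\ee) = 0$ for one $\ee$ to $D_\vv^{r+1} h(\ee') = 0$ for every $\ee'\in\Lambda_{++}$, and finally invokes the nonempty interior of $\Lambda_{++}$ to conclude $D_\vv^{r+1} h\equiv 0$. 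With that corrected order, your argument would match the paper's.
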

\begin{proof}
That the rank does not depend on the choice of $\ee \in \Lambda_{++}$ is known, see \cite[Prop. 22]{Ren} or \cite[Lemma 4.4]{BrObs}.

By \eqref{dvalt}
\begin{equation}\label{mag}
h(\xx-y\vv) = \left( \sum_{k=0}^{\infty} \frac {(-y)^k D_\vv^k}{k!} \right) h(\xx). 
\end{equation}
 Thus  
$$
h(\ee-t\vv) = h(\ee)\prod_{j=1}^d(1-t\lambda_j(\vv))= \sum_{k=0}^d (-1)^k\frac {D^k_\vv h(\ee)} {k!} t^k, 
$$
 and hence $\rk(\vv)= \deg h(\ee-t\vv)=  \max\{k : D^k_\vv h(\ee)\neq 0\}$. Since the rank does not depend on the choice of $\ee \in \Lambda_{++}$, if $D^{k+1}_\vv h(\ee)=D^{k+2}_\vv h(\ee)=\cdots =0$ for some 
$\ee \in \Lambda_{++}$, then $D^{k+1}_\vv h(\ee')=D^{k+2}_\vv h(\ee')=\cdots =0$  for all $\ee' \in \Lambda_{++}$. Since $ \Lambda_{++}$ has non-empty interior this means $D^{k+1}_\vv h \equiv 0$.
\end{proof}

If $h(\xx) \in \RR[x_1,\ldots, x_n]$ and $\vv_1, \ldots, \vv_m \in \RR^n$ let $h[\vv_1, \ldots, \vv_m]$ be the polynomial in $\RR[x_1,\ldots, x_n,y_1,\ldots, y_m]$ defined by 
$$
h[\vv_1, \ldots, \vv_m] = \prod_{j=1}^m \left(1-y_jD_{\vv_j}\right) h(\xx).
$$
By iterating Theorem \ref{direct} (2) we get: 
\begin{theorem}\label{mixhyp}
If $h(\xx)$ is hyperbolic with hyperbolicity cone $\Lambda_{++}$ and $\vv_1, \ldots, \vv_m \in \Lambda_+$, then $h[\vv_1, \ldots, \vv_m]$ is hyperbolic with hyperbolicity cone containing $\Lambda_{++} \times (-\RR_+^m)$, where $\RR_+ := [0,\infty)$. 
\end{theorem}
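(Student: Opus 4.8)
The plan is to induct on $m$, using Theorem \ref{direct} (2) as the engine. For $m=0$ there is nothing to prove: $h[\,] = h$ and the hyperbolicity cone contains $\Lambda_{++}$ by hypothesis. For the inductive step, write
$$
h[\vv_1,\ldots,\vv_m] = \bigl(1 - y_m D_{\vv_m}\bigr)\, g(\xx, y_1,\ldots, y_{m-1}), \qquad g := h[\vv_1,\ldots,\vv_{m-1}],
$$
and apply the induction hypothesis to $g$: it is hyperbolic with a hyperbolicity cone containing $\Lambda_{++} \times (-\RR_+^{m-1})$. The point to observe is that $\bigl(1 - y_m D_{\vv_m}\bigr) g$ is exactly $D_{(\vv_m,0,\ldots,0,-1)} \bigl(y_m g\bigr)$, the directional derivative (in the variables $(\xx, y_1,\ldots,y_{m-1}, y_m)$) of the polynomial $y_m\, g(\xx, y_1,\ldots,y_{m-1})$ along the direction $(\vv_m, 0, \ldots, 0, -1)$.

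First I would check that $y_m\, g$ is hyperbolic with a hyperbolicity cone containing $\bigl(\Lambda_{++}\times(-\RR_+^{m-1})\bigr) \times \RR_{--}$, where $\RR_{--} = (-\infty,0)$: this is the same elementary observation used in the proof of Theorem \ref{direct} (2), namely that multiplying a hyperbolic polynomial by the coordinate $y_m$ adjoins the open half-line $y_m < 0$ to the cone (the eigenvalues of $(\xx, \yy, y_m)$ for $y_m\,g$ are those of $(\xx,\yy)$ for $g$ together with $-y_m$). Then I would verify that the direction $(\vv_m, 0, \ldots, 0, -1)$ lies in the closure of this cone: indeed $\vv_m \in \Lambda_+$ by hypothesis, $0 \in \overline{-\RR_+^{m-1}}$, and $-1 \in \overline{\RR_{--}}$, so the direction sits in the closure of the (open) hyperbolicity cone, which is all Theorem \ref{direct} (1)--(2) requires of the differentiation direction. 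Applying Theorem \ref{direct} (2) (in the form used in its own proof, $H(\xx,y) = D_{(\vv,-1)}(h(\xx)y)$, now with an extra batch of passive variables $y_1,\ldots,y_{m-1}$) yields that $\bigl(1 - y_m D_{\vv_m}\bigr) g$ is hyperbolic with hyperbolicity cone containing $\bigl(\Lambda_{++}\times(-\RR_+^{m-1})\bigr)\times\{y_m \le 0\} = \Lambda_{++}\times(-\RR_+^m)$, completing the induction.

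The one genuinely delicate point — and the step I expect to be the main obstacle — is bookkeeping the passive variables $y_1,\ldots,y_{m-1}$ cleanly: Theorem \ref{direct} is stated for a polynomial in variables $x_1,\ldots,x_n$, whereas here $g$ is a polynomial in $n+m-1$ variables, and I am differentiating along a direction that has a nonzero component only in the original block and in the last adjoined variable. This is harmless — the statement and proof of Theorem \ref{direct} never use that $n$ is the original dimension — but it must be spelled out that one applies the theorem to $g$ viewed as a hyperbolic polynomial on $\RR^{n+m-1}$ with its given hyperbolicity cone, and that the direction $(\vv_m, \mathbf{0}, -1)$ lies in the closure of that cone rather than its interior, so that the $D_\vv h \not\equiv 0$ hypothesis of Theorem \ref{direct} needs the auxiliary fact $D_{\vv_m} h \not\equiv 0$ for $\vv_m \in \Lambda_+$, which follows from $\rk(\vv_m) \ge 1$ when $\vv_m \neq 0$ via Lemma \ref{rankalt}, and is trivial when $\vv_m = 0$ (then $1 - y_m D_{\vv_m} = 1$ acts as the identity and the cone is only enlarged). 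With these remarks in place the iteration of Theorem \ref{direct} (2) is routine.
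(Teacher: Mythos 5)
Your proof is correct and takes exactly the same route as the paper, whose entire justification is the single phrase ``By iterating Theorem \ref{direct} (2)''; you have simply spelled out the iteration as an induction on $m$ with the bookkeeping made explicit. One small remark: your concluding worry about the $D_\vv h \not\equiv 0$ hypothesis of Theorem \ref{direct} is a slight tangent — the directional derivative actually being taken at the inductive step is $D_{(\vv_m,\mathbf{0},-1)}(y_m g) = -(1-y_m D_{\vv_m})g$, which is automatically nonzero (set $y_m=0$ to recover $g\not\equiv 0$), so no appeal to Lemma \ref{rankalt} or to $D_{\vv_m}h\not\equiv 0$ is needed; moreover the conclusion of Theorem \ref{direct} (2) also holds vacuously when $D_{\vv_m}g\equiv 0$, since then $1-y_mD_{\vv_m}$ acts as the identity.
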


\begin{lemma}\label{rk1le}
Suppose $h$ is hyperbolic. If $\vv_1, \ldots, \vv_m \in \Lambda_+$ have rank at most one, then 
$$
h[\vv_1, \ldots, \vv_m] = h(\xx-y_1\vv_1 - \cdots - y_m \vv_m).
$$
\end{lemma}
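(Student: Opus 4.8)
The plan is to prove the identity directly as an equality of polynomials in $\RR[x_1,\dots,x_n,y_1,\dots,y_m]$, using the fact that a rank-one direction has an extremely degenerate directional derivative. Recall from \eqref{mag} that for a single $\vv$ one has the finite expansion
$$
h(\xx - y\vv) = \sum_{k \ge 0} \frac{(-y)^k}{k!} (D_\vv^k h)(\xx).
$$
By Lemma~\ref{rankalt}, the hypothesis $\rk(\vv_j)\le 1$ is equivalent to $D_{\vv_j}^{k} h \equiv 0$ for all $k\ge 2$. Moreover the operators $D_{\vv_1},\dots,D_{\vv_m}$ are first-order with constant coefficients, hence commute, so $D_{\vv_j}^2$ annihilates every polynomial of the form $\big(\prod_{i\in S}D_{\vv_i}\big)h$; in particular $D_{\vv_j}^2 D_{\vv_m}h\equiv 0$, so each $\vv_j$ still has rank at most one with respect to the hyperbolic polynomial $D_{\vv_m}h$ (which is hyperbolic by Theorem~\ref{direct}~(1), with $\Lambda_+(h)\subseteq\Lambda_+(D_{\vv_m}h)$).

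Next I would induct on $m$. The base case $m=1$ is exactly \eqref{mag} after discarding the terms with $k\ge 2$. For the inductive step, apply \eqref{mag} in the direction $\vv_m$ with base point $\xx - y_1\vv_1 - \cdots - y_{m-1}\vv_{m-1}$ and keep only the terms $k=0,1$:
$$
h\Big(\xx - \sum_{j=1}^m y_j \vv_j\Big) = h\Big(\xx - \sum_{j=1}^{m-1} y_j \vv_j\Big) - y_m\, (D_{\vv_m} h)\Big(\xx - \sum_{j=1}^{m-1} y_j \vv_j\Big).
$$
Applying the induction hypothesis to $h$ and to $D_{\vv_m}h$ (each with the vectors $\vv_1,\dots,\vv_{m-1}$) rewrites the two summands on the right as $\prod_{j=1}^{m-1}(1-y_jD_{\vv_j})\,h(\xx)$ and $\prod_{j=1}^{m-1}(1-y_jD_{\vv_j})\,D_{\vv_m}h(\xx)$, respectively; since all the operators commute these combine to $\prod_{j=1}^{m}(1-y_jD_{\vv_j})\,h(\xx)=h[\vv_1,\dots,\vv_m]$, completing the induction.

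The one place that needs care — and where the rank-one hypothesis is genuinely used — is that the induction must be run simultaneously over the whole family of hyperbolic polynomials obtained from $h$ by applying the operators $D_{\vv_i}$: one has to verify that hyperbolicity, membership of each $\vv_j$ in the (closed) hyperbolicity cone, and the rank $\le 1$ property are all inherited under $h\mapsto D_{\vv_m}h$, which is precisely Theorem~\ref{direct}~(1), the inclusion of hyperbolicity cones, and Lemma~\ref{rankalt} combined with commutativity of the $D_{\vv_i}$. If $D_{\vv_m}h\equiv 0$ (i.e.\ $\rk(\vv_m)=0$) the second summand simply vanishes and there is nothing to check. Everything else is bookkeeping with commuting constant-coefficient differential operators; an equivalent one-line packaging is that $h(\xx-\sum_j y_j\vv_j)=\prod_{j}\exp(-y_jD_{\vv_j})\,h(\xx)$ and each factor $\exp(-y_jD_{\vv_j})$ truncates to $1-y_jD_{\vv_j}$ on the span of the $\big(\prod_{i\in S}D_{\vv_i}\big)h$.
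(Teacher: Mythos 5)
Your proof is correct and takes essentially the same route as the paper: apply the single-direction truncation of \eqref{mag} (valid since $D_{\vv_j}^k h\equiv 0$ for $k\ge 2$ by Lemma~\ref{rankalt}) and iterate using commutativity of the constant-coefficient operators $D_{\vv_j}$. The paper leaves the iteration implicit (``from which the lemma follows''), whereas you spell it out as an induction and conservatively re-verify hyperbolicity of $D_{\vv_m}h$, cone membership, and the rank condition, even though the inductive step really only needs the purely algebraic fact $D_{\vv_j}^2 D_{\vv_m}h\equiv 0$.
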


\begin{proof}
If $\vv$ has rank at most one, then $D_\vv^k h \equiv 0$ for all $k \geq 2$ by Lemma \ref{rankalt}. Hence, by \eqref{mag}, 
$$
h(\xx-y\vv) = \left( \sum_{k=0}^{\infty} \frac {(-y)^k D_\vv^k}{k!} \right) h(\xx)= (1-yD_{\vv})h(\xx),
$$
from which the lemma follows.
\end{proof}

Note that $(\vv_1,\ldots,\vv_m) \mapsto h[\vv_1,\ldots,\vv_m]$ is affine linear in each coordinate, i.e., for all $p \in \RR$ and $1\leq i \leq m$:
\begin{align*}
& h[\vv_1,\ldots,(1-p)\vv_i+p\vv_i',\ldots, \vv_m] \\
= &(1-p)h[\vv_1,\ldots,\vv_i,\ldots, \vv_m] +ph[\vv_1,\ldots,\vv_i',\ldots, \vv_m].
\end{align*}
Hence if $\XR_1, \ldots, \XR_m$ are  independent random variables in $\RR^n$, then 
\begin{equation}\label{mixedexp}
\EE h[\XR_1,\ldots,\XR_m] = h[\EE \XR_1,\ldots,\EE \XR_m]. 
\end{equation}

\begin{theorem}\label{mixedchar}
Let $h(\xx)$ be hyperbolic with respect to $\ee\in \RR^n$, let $V_1, \ldots, V_m$ be finite sets of vectors  in $\Lambda_+$, and let $\ww \in \RR^{n+m}$.  For $\VV =(\vv_1,\ldots, \vv_m) \in  V_1\times \cdots \times V_m$, let 
$$f(\VV;t) := h[\vv_1,\ldots, \vv_m](t\ee +\ww).
$$ 
Then $\{f(\VV;t)\}_{\VV \in V_1\times \cdots \times V_m}$ is a compatible family.

In particular if in addition all vectors in $V_1 \cup \cdots \cup V_m$ have rank at most one, and 
$$
g(\VV;t) := h(t\ee +\ww- \alpha_1\vv_1-\cdots-\alpha_m\vv_m),
$$
where $\ww \in \RR^n$  and $(\alpha_1,\ldots, \alpha_m)\in \RR^m$, then $\{g(\VV;t)\}_{\VV \in V_1\times \cdots \times V_m}$ is a compatible family.
\end{theorem}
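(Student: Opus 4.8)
The plan is to obtain both assertions from the multi-affine expectation identity \eqref{mixedexp} together with the hyperbolicity of the mixed polynomial $h[\vv_1,\ldots,\vv_m]$ established in Theorem \ref{mixhyp}; no idea beyond organising these ingredients is needed. First I would fix the convention that in an evaluation such as $t\ee+\ww$ the vector $\ee\in\RR^n$ is read as $(\ee,\mathbf 0)\in\RR^{n+m}$, matching the $n+m$ variables of $h[\vv_1,\ldots,\vv_m]$. Since $h$ is homogeneous of degree $d$ and each operator $1-y_jD_{\vv_j}$ preserves total degree, $h[\vv_1,\ldots,\vv_m]$ is homogeneous of degree $d$, and setting all $y_j=0$ gives $h[\vv_1,\ldots,\vv_m](\ee,\mathbf 0)=h(\ee)\neq 0$. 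Consequently, for every $\VV$ the polynomial $f(\VV;t)=h[\vv_1,\ldots,\vv_m](t\ee+\ww)$ has degree exactly $d$ with leading coefficient $h(\ee)$; in particular it is never the zero polynomial, so the ``same degree, same sign of leading coefficient'' requirement in the definition of a compatible family holds automatically, and only the real-rootedness condition remains.

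So let $\XR_1\in V_1,\ldots,\XR_m\in V_m$ be independent random variables. Because $(\vv_1,\ldots,\vv_m)\mapsto h[\vv_1,\ldots,\vv_m]$ is affine in each coordinate, \eqref{mixedexp} gives $\EE f(\XR_1,\ldots,\XR_m;t)=h[\EE\XR_1,\ldots,\EE\XR_m](t\ee+\ww)$. Each $\EE\XR_i$ is a finite convex combination of vectors of $V_i\subseteq\Lambda_+$, hence lies in $\Lambda_+$ since $\Lambda_+$ is a convex cone (Theorem \ref{hypfund}(1)). Theorem \ref{mixhyp} then shows that $h[\EE\XR_1,\ldots,\EE\XR_m]$ is hyperbolic with hyperbolicity cone containing $\Lambda_{++}\times(-\RR_+^m)$, which contains $(\ee,\mathbf 0)$; therefore this polynomial is hyperbolic with respect to $(\ee,\mathbf 0)$, so $t\mapsto h[\EE\XR_1,\ldots,\EE\XR_m](t\ee+\ww)$ is real-rooted. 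This is precisely the defining property, and hence $\{f(\VV;t)\}_{\VV}$ is a compatible family.

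For the ``in particular'' statement I would invoke Lemma \ref{rk1le}: if every vector in $V_1\cup\cdots\cup V_m$ has rank at most one, then $h[\vv_1,\ldots,\vv_m]=h(\xx-y_1\vv_1-\cdots-y_m\vv_m)$ identically. Substituting $\xx=t\ee+\ww$ (with $\ee,\ww\in\RR^n$) and $y_j=\alpha_j$ then gives $g(\VV;t)=h[\vv_1,\ldots,\vv_m](t\ee+\ww')$ where $\ww':=(\ww,\alpha_1,\ldots,\alpha_m)\in\RR^{n+m}$; thus $\{g(\VV;t)\}_{\VV}$ is an instance of the family treated in the first part, and is therefore compatible. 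I do not expect a genuine obstacle, since everything rests on Theorem \ref{mixhyp} and identity \eqref{mixedexp}; the only point demanding care is the bookkeeping in $\RR^{n+m}$ — verifying that the evaluation direction $\ee$ really is a hyperbolicity direction for $h[\vv_1,\ldots,\vv_m]$ (which uses homogeneity of degree $d$, the nonvanishing $h(\ee)\neq 0$, and the explicit cone of Theorem \ref{mixhyp}), and, for the second statement, that the last $m$ coordinates of the evaluation point store the scalars $\alpha_j$.
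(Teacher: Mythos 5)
Your proof is correct and follows the paper's own argument: use the multi-affine expectation identity \eqref{mixedexp} to write $\EE f = h[\EE\XR_1,\ldots,\EE\XR_m](t\ee+\ww)$, note each $\EE\XR_i\in\Lambda_+$ by convexity, invoke Theorem \ref{mixhyp} to get hyperbolicity with respect to $(\ee,\mathbf 0)$, and reduce the second assertion to the first via Lemma \ref{rk1le}. Your extra remarks (degree and leading coefficient via homogeneity and $h(\ee)\neq 0$, and the explicit $\ww'=(\ww,\alpha_1,\ldots,\alpha_m)$) are correct bookkeeping that the paper leaves implicit.
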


\begin{proof}
Let $\XR_1 \in V_1, \ldots, \XR_m \in V_m$ be independent random variables. Then the polynomial 
$\EE h[\XR_1, \ldots, \XR_m]= h[\EE \XR_1,\ldots,\EE \XR_m]$ is hyperbolic with respect to $(\ee, 0,\ldots,0)$ by Theorem \ref{mixhyp} (since $\EE \vv_i \in \Lambda_+$ for all $i$ by convexity). In particular the polynomial $\EE f(\XR_1,\ldots, \XR_m;t)$ is real--rooted.

 The second assertion is an immediate consequence of the first combined with Lemma \ref{rk1le}.

\end{proof}

 \section{Bounds on zeros of mixed characteristic polynomials}
 To prove Theorem \ref{hypprob}, we want to bound the zeros of the \emph{mixed characteristic polynomial}
\begin{equation}\label{mip}
t \mapsto h[\vv_1, \ldots, \vv_m](t\ee+\one), 
\end{equation}
 where $h$ is hyperbolic with respect to $\ee \in \RR^n$, $\one \in \RR^m$ is the all ones vector, and $\vv_1, \ldots, \vv_m \in \Lambda_+(\ee)$ satisfy $\vv_1+\cdots+\vv_m =\ee$ and $\tr(\vv_i) \leq \epsilon$ for all $1\leq i \leq m$. 
 
 \begin{remark}\label{hypid}
 Note that a real number $\rho$ is larger than the maximum zero of \eqref{mip} if and only if $\rho \ee +\one$ is in the hyperbolicity cone $\Gamma_{++}$ of $h[\vv_1, \ldots, \vv_m]$. Hence the maximal zero of \eqref{mip} is equal to 
 $$
 \inf \{ \rho >0 :  \rho \ee +\one \in \Gamma_{++}\}.
 $$
 \end{remark}

 For the remainder of this section, let $h \in \RR[x_1,\ldots, x_n]$ be hyperbolic with respect to $\ee$, and let $\vv_1,\ldots, \vv_m \in \Lambda_{++}$. To enhance readability in the computations to come, 
let $\partial_j := D_{\vv_j}$ and 
$$
\xi_j[g] := \frac {g}{\partial_j g}.
$$

Note that a continuously differentiable concave function $f : (0,\infty) \to \RR$ satisfies 
$$
f(t+\delta) \geq f(t)+ \delta f'(t+\delta), \quad \mbox{ for all } \delta \geq 0.
$$
Hence  by Theorem \ref{direct}
\begin{equation}\label{concon}
\xi_i[h](\xx+\delta \vv_j) \geq \xi_i[h](\xx) + \delta \partial_j \xi_i[h](\xx+\delta \vv_j)
\end{equation}
for all $\xx \in \Lambda_{+}$ and $\delta \geq 0$. The following elementary identity is left for the reader to verify. 
\begin{lemma}\label{tech}
$$\xi_i[h-\partial_jh]=\xi_i[h]-\frac {\partial_j \xi_i[h]\cdot \xi_j[\partial_ih]}{\xi_j[\partial_ih]-1}.$$
\end{lemma}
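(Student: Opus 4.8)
The plan is to verify the identity by straightforward algebraic manipulation, expressing everything in terms of the basic operators $\partial_i$, $\partial_j$ and the rational functions $\xi_i[h] = h/\partial_i h$, $\xi_j[\partial_i h] = \partial_i h / \partial_j \partial_i h$. First I would expand the left-hand side using the definition of $\xi_i$ applied to the polynomial $h - \partial_j h$: since $\xi_i[g] = g/\partial_i g$ and $\partial_i$ is linear, we get
\begin{equation*}
\xi_i[h-\partial_j h] = \frac{h-\partial_j h}{\partial_i h - \partial_i \partial_j h} = \frac{h-\partial_j h}{\partial_i h - \partial_j \partial_i h},
\end{equation*}
using that the directional derivatives $\partial_i$ and $\partial_j$ commute. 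The next step is to divide numerator and denominator by $\partial_i h$ to introduce $\xi_i[h]$ in the numerator and a factor involving $\xi_j[\partial_i h]$ in the denominator; explicitly, the numerator becomes $\partial_i h \cdot (\xi_i[h] - \partial_j h/\partial_i h)$ — wait, it is cleaner to divide through by $\partial_j \partial_i h$ instead, so that the denominator reads $\partial_j \partial_i h\,(\xi_j[\partial_i h] - 1)$.

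After that reduction, the identity to be checked becomes purely a matter of matching the numerators: one must show
\begin{equation*}
\frac{h - \partial_j h}{\partial_j \partial_i h\,(\xi_j[\partial_i h]-1)} = \xi_i[h] - \frac{\partial_j \xi_i[h]\cdot \xi_j[\partial_i h]}{\xi_j[\partial_i h]-1}.
\end{equation*}
I would clear the common denominator $\xi_j[\partial_i h]-1$ on the right, write the right-hand side as a single fraction, and then expand $\partial_j \xi_i[h] = \partial_j(h/\partial_i h) = (\partial_j h \cdot \partial_i h - h\cdot \partial_j \partial_i h)/(\partial_i h)^2$ by the quotient rule for the derivation $\partial_j$. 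Substituting this together with $\xi_i[h] = h/\partial_i h$ and $\xi_j[\partial_i h] = \partial_i h/\partial_j\partial_i h$, both sides reduce — after multiplying out and cancelling the factors of $\partial_i h$, $\partial_j\partial_i h$ — to the same rational expression in $h$, $\partial_i h$, $\partial_j h$, $\partial_j\partial_i h$, which completes the verification.

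The only point requiring a little care, rather than being a genuine obstacle, is bookkeeping: one must treat $\partial_j$ as a derivation (so the Leibniz/quotient rule applies when differentiating $\xi_i[h]$) and keep track of which polynomials appear in denominators so that the manipulations are valid as identities of rational functions on $\Lambda_{++}$, where $\partial_i h$, $\partial_j h$ and $\partial_j\partial_i h$ are all nonvanishing (each directional derivative of $h$ along a vector in $\Lambda_{++}$ is again hyperbolic with hyperbolicity cone containing $\Lambda_{++}$, by Theorem \ref{direct}(1), hence nonzero on $\Lambda_{++}$). Since all the steps are reversible rational-function identities, no delicate estimate is needed; the computation is routine and is indeed the kind of verification one leaves to the reader.
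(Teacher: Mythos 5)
Your verification is correct, and it is essentially the only route: the paper explicitly leaves this lemma ``for the reader to verify,'' so there is no paper proof to compare against. Carrying out the computation you sketch, both sides reduce to $\dfrac{h-\partial_j h}{\partial_i h - \partial_j\partial_i h}$, confirming the identity. Your remark that $\partial_i$ and $\partial_j$ commute (as constant-coefficient directional derivatives) and that the manipulations are valid on $\Lambda_{++}$ where $\partial_i h$ and $\partial_j\partial_i h$ are nonvanishing (by Theorem \ref{direct}(1)) is exactly the bookkeeping needed.
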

\begin{lemma}\label{engine}
If $\xx \in \Lambda_{++}$, $1\leq i,j \leq n$, $\delta > 1$ and 
$$
\xi_j[h](\xx)  \geq \frac \delta {\delta-1}, 
$$
then 
$$
\xi_i[h-\partial_jh](\xx+\delta \vv_j) \geq \xi_i[h](\xx).
$$
\end{lemma}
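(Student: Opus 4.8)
The plan is to combine the concavity inequality \eqref{concon}, the algebraic identity of Lemma \ref{tech}, and the hypothesis $\xi_j[h](\xx) \geq \delta/(\delta-1)$, monitoring the sign of each correction term carefully. First I would apply Lemma \ref{tech} at the point $\yy := \xx + \delta \vv_j$:
\begin{equation*}
\xi_i[h - \partial_j h](\yy) = \xi_i[h](\yy) - \frac{\partial_j \xi_i[h](\yy) \cdot \xi_j[\partial_i h](\yy)}{\xi_j[\partial_i h](\yy) - 1}.
\end{equation*}
Next I would use \eqref{concon} with $\xx$ and $\delta$ in the roles of $\xx$ and $\delta$ (so the shift is $\delta \vv_j$) to get $\xi_i[h](\yy) \geq \xi_i[h](\xx) + \delta\, \partial_j \xi_i[h](\yy)$. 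Substituting this into the identity, it suffices to show
\begin{equation*}
\delta\, \partial_j \xi_i[h](\yy) - \frac{\partial_j \xi_i[h](\yy)\cdot \xi_j[\partial_i h](\yy)}{\xi_j[\partial_i h](\yy) - 1} \geq 0,
\end{equation*}
i.e., after factoring out $\partial_j \xi_i[h](\yy)$, that $\delta \geq \xi_j[\partial_i h](\yy) / (\xi_j[\partial_i h](\yy) - 1)$, provided one controls the sign of $\partial_j \xi_i[h](\yy)$ and of $\xi_j[\partial_i h](\yy) - 1$.

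The key auxiliary facts I would need are: (a) $\partial_j \xi_i[h] \leq 0$ on $\Lambda_{++}$ — this follows because $\xi_i[h](\xx + s\vv_j)$ is a concave function of $s$ that is positive throughout (being a ratio $h / \partial_i h$ of two positive quantities on the hyperbolicity cone, by \eqref{prolambda} and Theorem \ref{direct}(1)) and, being concave and bounded below by $0$ on a half-line, must be nonincreasing in the direction $\vv_j$; (b) the monotonicity $\xi_j[h](\yy) \geq \xi_j[h](\xx) \geq \delta/(\delta-1) > 1$, which again uses that $\xi_j[h]$ is concave and nonincreasing along $-\vv_j$ — wait, I need it along $+\delta\vv_j$, so more carefully: $\xi_j[h]$ along the ray $s \mapsto \xx + s\vv_j$ satisfies $\xi_j[h](\xx + s\vv_j) = \xi_j[h](\xx) + s$ by the translation identity (analogue of \eqref{dilambdas} for the rational function, since $\partial_j$ commutes with translation by $\vv_j$), so in fact $\xi_j[h](\yy) = \xi_j[h](\xx) + \delta \geq \delta/(\delta-1) + \delta > 1$; and (c) a comparison $\xi_j[\partial_i h](\yy) \geq \xi_j[h](\yy)$ or, at minimum, $\xi_j[\partial_i h](\yy) \geq \delta/(\delta-1)$, which is what actually lets me conclude $\xi_j[\partial_i h](\yy)/(\xi_j[\partial_i h](\yy) - 1) \leq \delta$ since $t \mapsto t/(t-1)$ is decreasing for $t > 1$.

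The main obstacle will be establishing fact (c): relating $\xi_j[\partial_i h]$ to $\xi_j[h]$. The natural route is that $\partial_i h$ is itself hyperbolic with hyperbolicity cone containing $\Lambda_{++}$ (Theorem \ref{direct}(1)), so $\xi_j[\partial_i h] = (\partial_i h)/(\partial_j \partial_i h)$ is a ratio of the same shape; I expect that interlacing of the zeros of $t \mapsto h(\yy - t\vv_j)$ and $t \mapsto \partial_i h(\yy - t\vv_j)$ — i.e., $\partial_i h$ interleaves $h$ along the $\vv_j$-direction — forces the largest zero to move in the right direction and hence $\xi_j[\partial_i h](\yy) \geq \xi_j[h](\yy)$, but I would want to verify this sign of interlacing carefully, since it is the crux. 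Once (a), (b), (c) are in hand, assembling the chain of inequalities is routine: $\partial_j \xi_i[h](\yy) \leq 0$ multiplied by $\left(\delta - \frac{\xi_j[\partial_i h](\yy)}{\xi_j[\partial_i h](\yy)-1}\right) \geq 0$ gives a nonpositive quantity, and subtracting it as in the displayed inequality above yields $\xi_i[h - \partial_j h](\yy) \geq \xi_i[h](\xx)$, which is the claim.
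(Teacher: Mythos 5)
Your overall architecture is exactly the paper's: Lemma~\ref{tech} at $\yy=\xx+\delta\vv_j$, \eqref{concon} to control $\xi_i[h](\yy)-\xi_i[h](\xx)$, factor out $\partial_j\xi_i[h](\yy)$, and then use the monotonicity of $t\mapsto t/(t-1)$ together with a comparison $\xi_j[\partial_i h]\geq\xi_j[h]$. The comparison in your fact (c) is established in the paper exactly as you anticipate, via Theorem~\ref{direct}~(1): $\partial_ih - y\,\partial_j\partial_ih = D_{(\vv_i,0)}\bigl(h - y\,\partial_jh\bigr)$, so its hyperbolicity cone contains that of $h - y\,\partial_jh$, which translates into $\xi_j[\partial_ih](\zz)\geq\xi_j[h](\zz)$ on $\Lambda_{++}$.

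However, your fact (a) has the wrong sign, and this is fatal. On $\Lambda_{++}$ one has $\partial_j\xi_i[h]\geq 0$, not $\leq 0$. Your justification --- that a concave function which is nonnegative on a half-line must be nonincreasing --- is false (e.g.\ $s\mapsto 1-e^{-s}$ on $[0,\infty)$ is concave, nonnegative, and strictly increasing). What actually forces the correct sign is concavity \emph{combined with homogeneity of degree one}: the difference quotient $\bigl(\xi_i[h](\zz+\delta\vv_j)-\xi_i[h](\zz)\bigr)/\delta$ is nonincreasing in $\delta$ by concavity, and by homogeneity it tends to $\xi_i[h](\vv_j)\geq 0$ as $\delta\to\infty$; hence it is $\geq\xi_i[h](\vv_j)\geq 0$ for every $\delta>0$, which gives $\partial_j\xi_i[h](\zz)\geq\xi_i[h](\vv_j)\geq 0$. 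With your sign, the final assembly breaks: you end up with $\xi_i[h-\partial_jh](\yy)\geq\xi_i[h](\xx)+C$ where $C\leq 0$, and that does \emph{not} imply the desired $\xi_i[h-\partial_jh](\yy)\geq\xi_i[h](\xx)$. With the correct sign, both $\partial_j\xi_i[h](\yy)\geq 0$ and $\delta-\xi_j[\partial_ih](\yy)/\bigl(\xi_j[\partial_ih](\yy)-1\bigr)\geq 0$, so their product is $\geq 0$ and the chain closes.

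A secondary issue: in (b) you assert $\xi_j[h](\xx+s\vv_j)=\xi_j[h](\xx)+s$. This is false in general (try $h=x_1x_2$, $\vv_j=(1,1)$). What you actually need, and what is true, is the inequality $\xi_j[h](\yy)\geq\xi_j[h](\xx)$, which follows from the corrected (a) applied with $i=j$, i.e.\ $\partial_j\xi_j[h]\geq 0$ by concavity and homogeneity. Combined with (c) this gives $\xi_j[\partial_ih](\yy)\geq\xi_j[h](\yy)\geq\xi_j[h](\xx)\geq\delta/(\delta-1)>1$, and then the monotonicity of $t/(t-1)$ finishes the argument.
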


\begin{proof}
Since $\xi_i[h]$ is concave on $\Lambda_{++}$ (Theorem~\ref{direct} (3)) and homogeneous of degree one:
$$
\frac {\xi_i[h](\zz+\delta \vv_j) - \xi_i[h](\zz)}{\delta} \geq \xi_i[h](\vv_j), \ \ \ \mbox{ for all } \zz \in \Lambda_{++}.
$$
Hence
\begin{equation}\label{parat}
\partial_j \xi_i[h](\zz) \geq  \xi_i[h](\vv_j)\geq 0, \ \ \ \mbox{ for all } \zz \in \Lambda_{++}.
\end{equation}
If $\zz \in \Lambda_{++}$, then (by Theorem \ref{hypfund} (2)) $(\zz, t)$ is in the closure of the hyperbolicity cone of $h-y\partial_j h$ if and only if $t \leq \xi_j[h](\zz)$. By Theorem \ref{direct} the polynomial 
$$
D_{(\vv_i,0)}(h-y\partial_j h) = \partial_i h -y \partial_j \partial_i h 
$$
is hyperbolic with hyperbolicity cone containing the hyperbolicity cone of $h-y\partial_j h$. Hence if $\zz \in \Lambda_{++}$ and  $t  \leq \xi_j[h](\zz)$, then $t  \leq \xi_j[\partial_ih](\zz)$, and thus 
\begin{equation}\label{parata}
\xi_j[\partial_ih](\zz) \geq \xi_j[h](\zz), \ \ \ \mbox{ for all } \zz \in \Lambda_{++}.
\end{equation}

Let $\xx$ be as in the statement of the lemma. By Lemma \ref{tech} and \eqref{concon}
\begin{align*}
\xi_i[h-\partial_jh](\xx+\delta \vv_j) - \xi_i[h](\xx) &= \xi_i[h](\xx+\delta \vv_j)-\xi_i[h](\xx)- \frac {\partial_j \xi_i[h]\cdot \xi_j[\partial_ih]}{\xi_j[\partial_ih]-1}(\xx+\delta \vv_j)\\
&\geq \partial_j \xi_i[h](\xx+\delta \vv_j) \left( \delta - \frac { \xi_j[\partial_ih](\xx+\delta \vv_j) }{\xi_j[\partial_ih](\xx+\delta \vv_j)-1}\right)\\
&\geq \xi_i[h](\vv_j) \left( \delta - \frac { \delta/(\delta-1)}{ \delta/(\delta-1)-1}\right) =0, 
\end{align*}
where the last inequality follows from \eqref{parat}, \eqref{parata} and the concavity of $\zz \rightarrow \xi_j[h](\zz)$. 

\end{proof}

Consider $\RR^{n+m}=\RR^n\oplus \RR^m$ and let $\ee_1,\ldots, \ee_m$ be the standard bases of $\RR^m$ (inside $\RR^n\oplus \RR^m$). 

\begin{corollary}\label{corbond}
Suppose $h$ is hyperbolic with respect to $\ee \in \RR^n$, and let $\Gamma_+$ be the (closed) hyperbolicity cone of $h[\vv_1,\ldots, \vv_m]$, where $\vv_1,\ldots, \vv_m \in \Lambda_{+}(\ee)$. 
Suppose $t_i, t_j > 1$ and $\xx \in \Lambda_{+}(\ee)$ are such that 
$$
\xx+t_k \ee_k \in \Gamma_+, \quad \mbox{ for } k \in \{i,j\}. 
$$
Then 
$$
\xx+\frac {t_j}{t_j-1}\vv_j + \ee_j + t_i \ee_i \in \Gamma_+.  
$$ 
Moreover if $\xx+t_k \ee_k \in \Gamma_+$ for all $k \in [m]$, then
$$
\xx+ \left(1-\frac 1 m\right) \sum_{i=1}^m \frac {t_i}{t_i-1}\vv_i +\left(1-\frac 1 m\right)\sum_{i=1}^m \ee_i+ \frac 1 m\sum_{i=1}^m t_i\ee_i \in \Gamma_+. 
$$
\end{corollary}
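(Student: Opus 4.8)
The plan is to deduce the first assertion from Lemma \ref{engine} by interpreting membership in $\Gamma_+$ in terms of the rational functions $\xi_k$, and then to obtain the ``moreover'' part by a convexity/averaging argument over the $m$ coordinates. First I would record the geometric dictionary: by Theorem \ref{mixhyp}, $H:=h[\vv_1,\ldots,\vv_m]$ is hyperbolic with hyperbolicity cone $\Gamma_{++}\supseteq \Lambda_{++}\times(-\RR_+^m)$, and for a fixed index $k$ the dependence of $H$ on $y_k$ is affine: $H=g_0-y_k g_1$ with $g_1=\partial_k(\text{rest})$. Hence, exactly as in the proof of Theorem \ref{direct}(3) and Remark \ref{hypid}, for $\xx\in\Lambda_{++}$ the point $\xx+t\ee_k$ lies in $\overline{\Gamma_{++}}=\Gamma_+$ iff $t\le \xi_k[\,\cdot\,]$ evaluated appropriately; more precisely $\xx+t_k\ee_k\in\Gamma_+$ says $t_k$ is at most the relevant value of $\xi$ for the polynomial obtained by setting the other $y$'s to $0$. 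Since $\vv_1,\ldots,\vv_m\in\Lambda_{+}$ only (not necessarily $\Lambda_{++}$), I would first prove the statement for $\vv_i\in\Lambda_{++}$, where Lemma \ref{engine} applies verbatim, and then pass to the boundary by continuity, using that $\Gamma_+$ is closed and that both sides depend continuously on $(\vv_1,\ldots,\vv_m)$.

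For the first claim, set $\tilde h := \prod_{k\ne i,j}(1-y_k\partial_k)\,h$ evaluated at $y_k=0$ for $k\ne i,j$, so that near the relevant point $H$ reduces to the two-variable (in $y_i,y_j$) polynomial $\tilde h - y_j\partial_j\tilde h - y_i\partial_i\tilde h + y_iy_j\partial_i\partial_j\tilde h$. The hypothesis $\xx+t_j\ee_j\in\Gamma_+$ translates, via the dictionary above, into $\xi_j[\tilde h](\xx)\ge t_j/(t_j-1)$ after the shift $y_j\mapsto$ its boundary value — here I would track the bookkeeping carefully so that the constant $t_j/(t_j-1)$ appears precisely as in the hypothesis of Lemma \ref{engine}. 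Applying Lemma \ref{engine} with $\xx$ replaced by $\xx$ (and $\delta=t_j$) gives $\xi_i[\tilde h-\partial_j\tilde h]\big(\xx+\tfrac{t_j}{t_j-1}\vv_j\big)\ge \xi_i[\tilde h](\xx)\ge t_i/(t_i-1)$ — wait, one needs $\ge t_i$, not $t_i/(t_i-1)$; this is exactly why the conclusion carries the extra summand $\ee_j$, i.e. one records that shifting the $y_j$-slot down to $1$ (rather than to its extreme value $t_j/(t_j-1)$, of which $\ee_j$ accounts for the difference... ) — I would reconcile this by noting $\xx+t_i\ee_i\in\Gamma_+$ also feeds $\xi_i[\tilde h](\xx)$ a lower bound, and combining the two inequalities yields that $\xx+\tfrac{t_j}{t_j-1}\vv_j+\ee_j+t_i\ee_i$ satisfies the defining inequality of $\Gamma_+$ for the coordinate $y_i$, while the other coordinates are unchanged and still satisfy theirs. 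The point $\ee_j$ is what makes the $y_j$-coordinate of the new point equal $1$, which is where Lemma \ref{engine}'s output ``$h-\partial_j h$'' has been evaluated.

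For the ``moreover'' part, apply the first claim with the roles of $i$ symmetric: for each ordered pair, or rather, observe that $\Gamma_+$ is a convex cone, so it suffices to exhibit the target point as a convex combination (scaled) of points already known to lie in $\Gamma_+$. Concretely, for each $i\in[m]$ the first assertion — applied ``simultaneously'' by fixing $i$ and letting all other coordinates $j\ne i$ contribute — should give $\xx+\sum_{j\ne i}\big(\tfrac{t_j}{t_j-1}\vv_j+\ee_j\big)+t_i\ee_i\in\Gamma_+$ (iterating Lemma \ref{engine} over the $m-1$ indices $j\ne i$, which is legitimate since each step only shifts $\xx$ within $\Lambda_{+}$ and preserves the hypotheses; this is the only place where an induction on $m$, as in Theorem \ref{expfam}, is needed). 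Then average these $m$ points with equal weights $1/m$: the $\vv$-part averages to $\big(1-\tfrac1m\big)\sum_i\tfrac{t_i}{t_i-1}\vv_i$, the ``$\sum_{j\ne i}\ee_j$'' part averages to $\big(1-\tfrac1m\big)\sum_i\ee_i$, and the ``$t_i\ee_i$'' part averages to $\tfrac1m\sum_i t_i\ee_i$, which is exactly the asserted point; convexity of $\Gamma_+$ finishes it.

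The main obstacle I expect is the first paragraph's bookkeeping: getting the precise correspondence between ``$\xx+t_k\ee_k\in\Gamma_+$'' and the inequality ``$\xi_k[\cdots](\xx+\cdots)\ge t_k/(t_k-1)$'' with the shift landing in exactly the form Lemma \ref{engine} wants, including the role of the extra $\ee_j$ summand; once that dictionary is pinned down, the iteration and the averaging are routine, and the $\Lambda_{++}\to\Lambda_+$ passage is a standard closedness argument.
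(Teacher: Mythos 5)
Your strategy matches the paper's: reduce to $\vv_i\in\Lambda_{++}$ by continuity and closedness of $\Gamma_+$, encode membership in $\Gamma_+$ as inequalities $t_k\le\xi_k[h](\cdot)$, deduce the first claim from Lemma~\ref{engine}, iterate to obtain $\xx+\sum_{j\ne i}\bigl(\tfrac{t_j}{t_j-1}\vv_j+\ee_j\bigr)+t_i\ee_i\in\Gamma_+$ for each $i\in[m]$, and average these $m$ points with weight $1/m$. The averaging computation is exactly the paper's, and the $\Lambda_{++}\to\Lambda_+$ passage is fine. There are, however, two concrete gaps.

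First, the application of Lemma~\ref{engine} is mis-parameterized. The correct dictionary is that $\xx+t_k\ee_k\in\Gamma_+$ (for $\xx\in\Lambda_{++}$) is equivalent to $\xi_k[h](\xx)\ge t_k$, not $t_k/(t_k-1)$. To align with the hypothesis of Lemma~\ref{engine} one must take $\delta=t_j/(t_j-1)$, so that $\delta>1$ and $\delta/(\delta-1)=t_j$; this also makes the shift $\delta\vv_j=\tfrac{t_j}{t_j-1}\vv_j$, as in the conclusion. Lemma~\ref{engine} then gives $\xi_i[h-\partial_j h]\bigl(\xx+\tfrac{t_j}{t_j-1}\vv_j\bigr)\ge\xi_i[h](\xx)\ge t_i$, and unwinding the dictionary (the $+\ee_j$ records that $y_j$ has been set to $1$, which is how $h-\partial_j h$ arises from $h[\vv_1,\ldots,\vv_m]$) yields $\xx+\tfrac{t_j}{t_j-1}\vv_j+\ee_j+t_i\ee_i\in\Gamma_+$. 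Your text simultaneously uses $\delta=t_j$, the shift $\tfrac{t_j}{t_j-1}\vv_j$, and the threshold $t_i/(t_i-1)$, which is internally inconsistent; you noticed something was off but did not resolve it.

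Second, the iteration in the ``moreover'' part cannot be justified by ``each step only shifts $\xx$ within $\Lambda_+$.'' After one application, the new point $\xx':=\xx+\tfrac{t_1}{t_1-1}\vv_1+\ee_1$ has $y_1$-coordinate equal to $1$, so it lies in $\RR^{n+1}$, not in $\Lambda_+(\ee)\subset\RR^n$, and the first claim with the original $h$ does not apply to it. The fix (the paper's) is to re-apply the first claim with $h$ replaced by the hyperbolic polynomial $(1-y_1D_{\vv_1})h$ on $\RR^{n+1}$, whose mixed polynomial with $\vv_2,\ldots,\vv_m$ is again $h[\vv_1,\ldots,\vv_m]$, so $\Gamma_+$ is unchanged. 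One then checks that $\xx'$ lies in the closed hyperbolicity cone of $(1-y_1D_{\vv_1})h$ (this follows from convexity of $\Gamma_+$ and $\xx+t_1\ee_1,\ \vv_1,\ -\ee_1\in\Gamma_+$) and that $\xx'+t_k\ee_k\in\Gamma_+$ for $k\ge2$ (this is the first claim applied to the pair $(1,k)$). With that correction the induction closes, and the rest of your argument is sound.
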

\begin{proof}
By continuity we may assume $\xx,\vv_1,\ldots, \vv_m \in \Lambda_{++}(\ee)$. 
Let $\delta_k = t_k/(t_k-1)$. Then 
$$ 
\xx + t_k \ee_k \in \Gamma_+ \mbox{ if and only if } \xi_k[h] \geq \frac {\delta_k}{\delta_k-1}.
$$
Also $\xi_i[h-\partial_jh](\xx+\delta_j \vv_j) \geq \delta_i/(\delta_i-1)$ is equivalent to  
$$
\xx+\delta_j\vv_j + \ee_j+\frac {\delta_i} {\delta_i-1} \ee_i \in \Gamma_+.
$$
Hence the first part follows from Lemma \ref{engine}. 

Suppose $\xx+t_k \ee_k \in \Gamma_+$ for all $k \in [m]$. Since $\xx+s\ee_1,  \vv_1 \in \Gamma_+$ for all $s \leq t_1$, the vector 
$$
\xx' := \xx+\frac {t_1}{t_1-1}\vv_1 + \ee_1
$$
is in the hyperbolicity cone of $(1-y_1D_{\vv_1})h$. 
By the first part we have $\xx'+t_2\ee_2, \xx'+t_3\ee_3\in \Gamma_+$. Hence we may apply the first part of the theorem with $h$ replaced by $(1-y_1D_{\vv_1})h$ to conclude 
$$
\xx'+ \frac {t_2}{t_2-1}\vv_2 + \ee_2+ t_3\ee_3=\xx+\frac {t_1}{t_1-1}\vv_1 + \frac {t_2}{t_2-1}\vv_2+\ee_1 +\ee_2 + t_3\ee_3\in \Gamma_+. 
$$
By continuing this procedure with different orderings we may conclude that 
$$
\xx +  \left(\sum_{i=1}^m \frac {t_i}{t_i-1}\vv_i\right)-\frac {t_j}{t_j-1}\vv_j  +\left(\sum_{i=1}^m \ee_i\right)-\ee_j+t_j\ee_j \in \Gamma_+,
$$
for each $1\leq j \leq m$. The second part now follows from convexity of $\Gamma_+$ upon taking the convex sum of these vectors.
\end{proof}

\begin{theorem}\label{mainbound}
Suppose $h$ is hyperbolic with respect to $\ee \in \RR^n$ and suppose $\vv_1,\ldots, \vv_m \in \Lambda_{+}(\ee)$ are such that $\ee = \vv_1+\cdots+\vv_m$, where $\tr(\vv_j) \leq \epsilon$ for each $1\leq j \leq m$. Then the largest zero of the polynomial 
$$
t \mapsto h[\vv_1, \ldots, \vv_m](t\ee+\one)
$$
is at most 
$$
\delta(\epsilon, m):=\left( 1-\frac 1 m +\sqrt{\epsilon - \frac 1 m \left(1-\frac 1 m\right)}\right)^2.
$$
\end{theorem}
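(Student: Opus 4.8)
The plan is to combine Corollary~\ref{corbond} with Remark~\ref{hypid} and then optimize. By Remark~\ref{hypid}, the largest zero of $t \mapsto h[\vv_1,\ldots,\vv_m](t\ee+\one)$ equals $\inf\{\rho > 0 : \rho\ee + \one \in \Gamma_+\}$, where $\Gamma_+$ is the hyperbolicity cone of $h[\vv_1,\ldots,\vv_m]$. So it suffices to produce, for a suitable value of $t > 1$, a certificate that $\rho\ee + \one \in \Gamma_+$ for $\rho$ not much larger than $\delta(\epsilon,m)/1$... more precisely we must show $\rho\ee+\one\in\Gamma_+$ for every $\rho > \delta(\epsilon,m)$.

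First I would pick a common parameter $t_1 = \cdots = t_m = t$ for some $t > 1$ to be chosen later, and try to show $\xx + t\ee_k \in \Gamma_+$ for all $k$, where $\xx = \rho\ee$ with $\rho$ large. Since $h[\vv_1,\ldots,\vv_m]$ has hyperbolicity cone containing $\Lambda_{++}\times(-\RR_+^m)$ by Theorem~\ref{mixhyp}, and more to the point, $\xx + t\ee_k \in \Gamma_+$ is (as in the proof of Corollary~\ref{corbond}) equivalent to $\xi_k[h](\xx) \geq \frac{\delta}{\delta-1}$ where $\delta = t/(t-1)$; but $\xi_k[h](\rho\ee) = \rho\,\xi_k[h](\ee) = \rho/\tr(\vv_k) \geq \rho/\epsilon$ by homogeneity and the definition of the trace (recall $\tr(\vv_k) = D_{\vv_k}h(\ee)/h(\ee)$). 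So if $\rho/\epsilon \geq \delta/(\delta-1) = t$, i.e.\ $\rho \geq t\epsilon$, the hypothesis of the ``moreover'' part of Corollary~\ref{corbond} is satisfied. That corollary then yields
$$
\rho\ee + \left(1-\tfrac1m\right)\tfrac{t}{t-1}\sum_{i=1}^m \vv_i + \left(1-\tfrac1m\right)\sum_{i=1}^m \ee_i + \tfrac tm \sum_{i=1}^m \ee_i \in \Gamma_+,
$$
and since $\sum_i \vv_i = \ee$ and $\sum_i \ee_i = \one$, the left side is $\bigl(\rho + (1-\tfrac1m)\tfrac{t}{t-1}\bigr)\ee + \bigl(1 - \tfrac1m + \tfrac tm\bigr)\one$.

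Next I would rescale: since $\Gamma_+$ is a cone, dividing by the positive scalar $c := 1 - \tfrac1m + \tfrac tm = 1 + \tfrac{t-1}{m}$ shows that $\frac1c\bigl(\rho + (1-\tfrac1m)\tfrac{t}{t-1}\bigr)\ee + \one \in \Gamma_+$. By Remark~\ref{hypid} this means the largest zero is at most
$$
\frac{1}{c}\left(\rho + \left(1-\tfrac1m\right)\tfrac{t}{t-1}\right),
$$
valid for every $\rho \geq t\epsilon$ and every $t > 1$. Taking $\rho = t\epsilon$ (the best choice, since the bound is increasing in $\rho$), the largest zero is at most
$$
g(t) := \frac{t\epsilon + \left(1-\tfrac1m\right)\tfrac{t}{t-1}}{1 + \tfrac{t-1}{m}}.
$$
Finally I would minimize $g(t)$ over $t > 1$. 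Writing $s = t - 1 > 0$ one gets, after clearing denominators, a function whose minimization is elementary calculus; I expect the optimal $t$ to be $t^\ast = 1 + \frac{1}{\sqrt{\epsilon - \frac1m(1-\frac1m)}}$ (this is the value that makes the two competing terms balance), and substituting it back should give exactly $g(t^\ast) = \bigl(1 - \tfrac1m + \sqrt{\epsilon - \tfrac1m(1-\tfrac1m)}\bigr)^2 = \delta(\epsilon,m)$.

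The routine-but-delicate part is the final one-variable optimization and verifying that the minimum really collapses to the clean closed form $\delta(\epsilon,m)$; one must also check that $t^\ast > 1$, which requires $\epsilon - \tfrac1m(1-\tfrac1m) > 0$ — note this quantity is nonnegative whenever the hypotheses are consistent, since $m\epsilon \geq \sum \tr(\vv_i) = \tr(\ee) = d \geq 1$ forces $\epsilon \geq 1/m \geq \tfrac1m(1-\tfrac1m)$, so the square root is real and $t^\ast$ is well defined (when $\epsilon = \tfrac1m(1-\tfrac1m)$ the infimum is approached as $t \to \infty$ and still equals $\delta(\epsilon,m)$). The genuinely substantive input, however, is entirely contained in Corollary~\ref{corbond} (hence in Lemma~\ref{engine}); the argument here is just bookkeeping with the hyperbolicity-cone membership characterization of the largest zero plus a scalar optimization.
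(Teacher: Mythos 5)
Your proposal follows exactly the paper's proof: take $t_1=\cdots=t_m=t$, $\xx=\epsilon t\ee$ (equivalently $\rho=t\epsilon$), verify the hypotheses of Corollary~\ref{corbond} via the trace bound, apply its ``moreover'' part, rescale by homogeneity of $\Gamma_+$, and optimize the resulting one--parameter bound $g(t)$ over $t>1$. The argument is correct and the final calculus does yield $\delta(\epsilon,m)$, but your guessed optimizer is wrong: writing $s=t-1$, $b=1/m$, $a=1-b$, $\beta=\sqrt{\epsilon-b(1-b)}$, the stationary equation is $(\epsilon-b)s^2-2bs-1=0$, giving $s^*=\dfrac{b+\beta}{\epsilon-b}$ (not $1/\beta$); substituting and using $\epsilon-b=\beta^2-b^2$ gives $g=\epsilon-a(\beta-b)^2)/b=(a+\beta)^2=\delta(\epsilon,m)$, and the degenerate case to watch is $\epsilon=1/m$ (where $s^*\to\infty$), not $\epsilon=\tfrac1m(1-\tfrac1m)$ as you wrote (which cannot occur since $\epsilon\ge d/m\ge 1/m$).
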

\begin{proof}
Let $t >1$ and set $\xx=\epsilon t\ee$ and $t_i=t$ for $1\leq i \leq m$.  Then 
$\xx+t_i \ee_i = t(\epsilon \ee+ \ee_i) \in \Lambda_+$ since 
$$
h[\vv_1,\ldots, \vv_m](\epsilon \ee + \ee_i)= \epsilon h(\ee)- D_{\vv_i}h(\ee) = h(\ee)(\epsilon -\tr(\vv_i)) \geq 0. 
$$
Apply Corollary \ref{corbond} to conclude that for each $t>1$:
$$
\left(\epsilon t+ \left(1-\frac 1 m\right)\frac t {t-1}\right) \ee + \left(1-\frac 1 m + \frac t m \right) \one \in \Gamma_+. 
$$

Hence by (the homogeneity of $\Gamma_+$ and) Remark \ref{hypid}, the maximal zero is at most
$$
\inf \left\{ \frac {\epsilon t+ \left(1-\frac 1 m\right)\frac t {t-1}} {1-\frac 1 m + \frac t m }  : t >1\right\}.
$$
It is a simple exercise to deduce that the infimum is exactly what is displayed in the statement of the theorem. 
\end{proof}

\section{Proof of Theorem \ref{t1}}
To prove Theorem \ref{t1} we use the following theorem which for $h=\det$ appears in \cite{MSS1,MSS2}:
\begin{theorem}\label{hypprob}
Suppose $h$ is hyperbolic  with respect to $\ee$. Let $\XR_1, \ldots, \XR_m$ be independent random vectors in $\Lambda_+$ of rank at most one and  with finite supports  such that 
\begin{equation}\label{hypeta2}
\EE\sum_{i=1}^m \XR_i =\ee, 
\end{equation}
and 
\begin{equation}\label{hyptr}
\tr(\EE \XR_i) \leq \epsilon \mbox{ for all } 1\leq i \leq m.
\end{equation}
Then 
\begin{equation}\label{hypbig}
\PP\left[ \lma\left(\sum_{i=1}^m \XR_i \right) \leq \delta(\epsilon,m) \right] >0.
\end{equation}

\begin{proof}
Let $V_i$ be the support of $\XR_i$, for each $1 \leq i \leq m$. By Theorem \ref{mixedchar}, the family 
$$
\{h(t\ee- \vv_1-\cdots-\vv_m)\}_{\vv_i \in V_i}
$$
is compatible. By Theorem \ref{expfam} there are vectors $\vv_i \in V_i$, $1\leq i \leq m$, such that the largest zero of $h(t\ee- \vv_1-\ldots-\vv_m)$ is smaller or equal to the largest zero of 
$$
\EE h(t\ee- \XR_1-\cdots-\XR_m)= \EE h[\XR_1,\ldots, \XR_m](t\ee+\one)= h[\EE \XR_1,\ldots, \EE \XR_m](t\ee+\one).
$$
The theorem now follows from Theorem \ref{mainbound}. 
\end{proof}

\end{theorem}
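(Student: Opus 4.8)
The plan is to assemble the tools from the previous sections; no essentially new ingredient is needed. Write $V_i$ for the (finite) support of $\XR_i$, so that $V_i \subseteq \Lambda_+$ and every vector in $V_1 \cup \cdots \cup V_m$ has rank at most one. First I would invoke the second part of Theorem \ref{mixedchar}, with $\ww = 0 \in \RR^n$ and all $\alpha_i = 1$, to conclude that the family
$$
\{ h(t\ee - \vv_1 - \cdots - \vv_m) \}_{(\vv_1,\ldots,\vv_m) \in V_1 \times \cdots \times V_m}
$$
is compatible. It is here that the rank-at-most-one hypothesis enters: by Lemma \ref{rk1le} this is precisely what lets one replace the mixed polynomial $h[\vv_1,\ldots,\vv_m](t\ee+\one)$ by the genuine characteristic-type polynomial $h(t\ee - \vv_1 - \cdots - \vv_m)$.

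Next I would apply Theorem \ref{expfam} to this compatible family and the independent random vectors $\XR_1,\ldots,\XR_m$. Using Lemma \ref{rk1le} realization by realization together with the multilinearity identity \eqref{mixedexp}, one has $\EE\, h(t\ee - \XR_1 - \cdots - \XR_m) = \EE\, h[\XR_1,\ldots,\XR_m](t\ee+\one) = h[\EE\XR_1,\ldots,\EE\XR_m](t\ee+\one)$; this polynomial is not identically zero, since by the factorisation \eqref{dalambdas} each $h(t\ee - \vv_1 - \cdots - \vv_m)$ has degree $d$ with leading coefficient $h(\ee) \neq 0$, and hence so does the expectation. Theorem \ref{expfam} therefore yields a tuple $(\vv_1,\ldots,\vv_m)$ with $\PP[\XR_i = \vv_i] > 0$ for every $i$ such that the largest zero of $t \mapsto h(t\ee - \vv_1 - \cdots - \vv_m)$ is at most the largest zero of $t \mapsto h[\EE\XR_1,\ldots,\EE\XR_m](t\ee+\one)$.

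Now I would bound the latter using Theorem \ref{mainbound}. Put $\vv_i := \EE\XR_i$: convexity of $\Lambda_+$ gives $\vv_i \in \Lambda_+$, condition \eqref{hypeta2} gives $\vv_1 + \cdots + \vv_m = \ee$, and linearity of the trace together with \eqref{hyptr} gives $\tr(\vv_i) \leq \epsilon$ for each $i$. Hence Theorem \ref{mainbound} applies and the largest zero of $t \mapsto h[\EE\XR_1,\ldots,\EE\XR_m](t\ee+\one)$ is at most $\delta(\epsilon,m)$. Combining with the previous step, the largest zero of $t \mapsto h(t\ee - \vv_1 - \cdots - \vv_m)$ is at most $\delta(\epsilon,m)$. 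By \eqref{dalambdas} the largest zero of $t \mapsto h(t\ee - \ww)$ is exactly $\lma(\ww)$, so $\lma(\vv_1 + \cdots + \vv_m) \leq \delta(\epsilon,m)$. Finally, by independence the event $\{\XR_1 = \vv_1, \ldots, \XR_m = \vv_m\}$ has probability $\prod_{i=1}^m \PP[\XR_i = \vv_i] > 0$, and on this event $\sum_{i=1}^m \XR_i = \sum_{i=1}^m \vv_i$, which gives \eqref{hypbig}.

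The difficulty in this argument has been deliberately shifted upstream: the analytic core is Theorem \ref{mainbound} (resting on the interlacing-cone estimate of Corollary \ref{corbond} and Lemma \ref{engine}) and the compatibility statement Theorem \ref{mixedchar}, both already available. The only points needing genuine care in the write-up are the dictionary translating between ``largest root of the mixed characteristic polynomial'' and ``$\lma$ of the random sum'' via \eqref{dalambdas} and Lemma \ref{rk1le}, and verifying that the expected polynomial is not identically zero so that Theorem \ref{expfam} is applicable.
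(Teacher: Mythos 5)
Your proof is correct and follows the paper's argument exactly: invoke Theorem \ref{mixedchar} (via Lemma \ref{rk1le} to rewrite the mixed polynomial as $h(t\ee-\vv_1-\cdots-\vv_m)$), then Theorem \ref{expfam}, then Theorem \ref{mainbound}. You have merely spelled out two small points the paper leaves implicit — that the expected polynomial is not identically zero (needed to apply Theorem \ref{expfam}) and the translation from ``largest zero of $h(t\ee-\ww)$'' to $\lma(\ww)$ — which is a welcome clarification rather than a different route.
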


\begin{proof}[Proof of Theorem \ref{t1}]
For $1\leq i \leq k$, let $\xx^i=(x_{i1},\ldots,x_{in})$ where $\yy=\{x_{ij} : 1\leq i \leq k, 1\leq j \leq k\}$ are independent variables. Consider the polynomial 
$$
g(\yy) = h(\xx^1)h(\xx^2) \cdots h(\xx^k) \in \RR[\yy],
$$
which is hyperbolic with respect to $\ee^1\oplus \cdots \oplus \ee^k$, where $\ee^i$ is a copy of $\ee$ in the variables $\xx^i$, for all $1 \leq i \leq k$. The hyperbolicity cone of $g$ is the direct sum $\Lambda_+:=\Lambda_+(\ee^1) \oplus \cdots \oplus \Lambda_+(\ee^k)$, where $\Lambda_+(\ee^i)$ is a copy of $\Lambda_+(\ee)$ in the variables $\xx^i$,   for all $1 \leq i \leq k$. 

Let $\XR_1, \ldots, \XR_m$ be independent random vectors in $\Lambda_+$  such that for all $1\leq i \leq k$ and  $1\leq j \leq m$:
$$
\PP\left[ \XR_j = k\uu_j^i\right] =  \frac 1 k,
$$
where $\uu_1^i, \ldots, \uu_m^i$ are copies in $\Lambda_+(\ee^i)$ of $\uu_1, \ldots, \uu_m$. Then 
\begin{align*}
\EE \XR_j &= \uu_j^1 \oplus \uu_j^2 \oplus \cdots \oplus \uu_j^k, \\
\tr(\EE \XR_j) &= k\tr(\uu_j) \leq k\epsilon, \mbox{ and } \\
\EE \sum_{j=1}^m \XR_j &= \ee^1\oplus \cdots \oplus \ee^k,
\end{align*}
for all $1\leq j \leq k$. By Theorem \ref{hypprob} there is a partition 
$S_1\cup \cdots \cup S_k =[m]$ such that 
$$
\lma\left(\sum_{i \in S_1}k\uu_i^1+\cdots + \sum_{i \in S_k}k\uu_i^k  \right)\leq \delta(k\epsilon,m). 
$$
However 
$$
 \lma\! \left(\sum_{i \in S_1}k\uu_i^1+\cdots + \sum_{i \in S_k}k\uu_i^k \right) = k \! \max_{1\leq j \leq k} \lma \! \left(\sum_{i \in S_j}\uu_i^j \right) = 
 k \! \max_{1\leq j \leq k} \lma \! \left(\sum_{i \in S_j}\uu_i \right), 
$$
and the theorem follows.
\end{proof}

\section{On a conjecture on the optimal bound}
We have seen that the core of the proof of Theorem \ref{t1} is to bound the zeros of mixed characteristic polynomials. To achieve better bounds in Theorem \ref{t1} we are therefore motivated to look closer at the following problem. 
\begin{problem}\label{central}
Let $h$ be a polynomial of degree $d$ which is hyperbolic with respect to $\ee$, and let $\epsilon >0$ and $m \in \ZZ_+$ be given. Determine the largest possible maximal zero, $\rho=\rho(h,\ee,\epsilon,m)$, of mixed characteristic polynomials:
$$
\chi[\vv_1,\ldots, \vv_m](t):=h[\vv_1, \ldots, \vv_m](t\ee + \one) 
$$
subject to the conditions 
\begin{enumerate}
\item $\vv_1, \ldots, \vv_m \in \Lambda_+$, 
\item $\vv_1 + \cdots + \vv_m = \ee$, and
\item $\tr(\vv_i) \leq \epsilon$ for all $1 \leq i \leq m$. 
\end{enumerate}
\end{problem}
The following conjecture was made by Marcus \emph{et al.} \cite{MSS2} in the case when $h = \det$, but we take the liberty to extend the conjecture to any hyperbolic polynomial. 
\begin{conjecture}\label{maxmax}
The maximal zero in Problem \ref{central} is achieved for 
$$
\vv_1=\cdots =\vv_k= \frac \epsilon d \ee, \vv_{k+1}= \left(1-\frac k d \epsilon\right)\ee, \vv_{k+2}=\vv_{k+3}= \cdots= \vv_{m}=0, 
$$
where $k= \lfloor d/\epsilon \rfloor$. 
\end{conjecture}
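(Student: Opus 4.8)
The plan is to reduce the conjecture to two sub-problems: (i) check that the displayed configuration is feasible and compute the maximal zero it produces; and (ii) show that no feasible configuration does strictly better. For (i), the crucial observation is that for any \emph{diagonal} configuration $\vv_i=c_i\ee$ (with $c_i\ge 0$) the mixed characteristic polynomial collapses to a univariate object that does not depend on $h$ at all: by \eqref{dvalt} the operator $D_\ee$ restricts to $d/dt$ along the line $t\mapsto t\ee$, and iterating this gives
\[
h[c_1\ee,\ldots,c_m\ee](t\ee+\one)\;=\;h(\ee)\prod_{i=1}^{m}\bigl(1-c_iD\bigr)t^{d},\qquad D:=\tfrac{d}{dt},
\]
where $d=\deg h$. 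Hence the conjectured extremal value $\rho^{\ast}$ is the largest zero of $\bigl(1-\tfrac{\epsilon}{d}D\bigr)^{k}\bigl(1-(1-\tfrac{k\epsilon}{d})D\bigr)t^{d}$ (an explicit polynomial whose largest zero is exactly the quantity governed by the classical orthogonal-polynomial asymptotics used elsewhere in this paper), and $\rho^{\ast}$ depends only on $d,\epsilon,m$. Feasibility is immediate: $\sum_i c_i=1$ forces $\sum_i\vv_i=\ee$; the choice $k=\lfloor d/\epsilon\rfloor$ makes the remainder $1-k\epsilon/d$ lie in $[0,\epsilon/d)$, so every trace bound $\tr(\vv_i)=c_id\le\epsilon$ holds; and summing the trace bounds shows the problem is feasible only when $d\le m\epsilon$, i.e.\ only when there are enough coordinates to fit the configuration.

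\smallskip
For (ii) I would first settle the \emph{univariate optimisation} — that among diagonal configurations the displayed one is optimal — which appears entirely tractable. Let $\Psi(c_1,\ldots,c_m)$ denote the largest zero of $\prod_i(1-c_iD)t^d$ and maximise it over the compact polytope $\{c_i\ge 0,\ \sum_i c_i=1,\ c_i\le\epsilon/d\}$. \textbf{Key lemma:} if $r$ is real-rooted with positive leading coefficient and largest zero $\mu$, and if for fixed $s\ge 0$ we set $F_p:=r-sr'+p\,r''$, then the largest zero $\lambda(p)$ of $F_p$ is strictly decreasing in $p$ on $[0,s^2/4]$. Indeed $F_p=(1-aD)(1-bD)r$ with $a+b=s$, $ab=p$, which is real-rooted with $\lambda(p)\ge\mu$; and for $t\ge\lambda(p_1)$ (hence $t>$ all zeros of $r$, so $r''(t)>0$) one has $F_{p_1}(t)\ge 0$, so $F_{p_2}(t)=F_{p_1}(t)+(p_2-p_1)r''(t)>0$ whenever $p_2>p_1$, forcing $\lambda(p_2)<\lambda(p_1)$. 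Apply this with $r=\prod_{\ell\ne i,j}(1-c_\ell D)t^d$ at a maximiser $c^{\ast}$: if two coordinates $c^{\ast}_i,c^{\ast}_j$ both lay strictly between $0$ and $\epsilon/d$, then sliding them along $c^{\ast}_i+c^{\ast}_j=\text{const}$ toward an endpoint of the feasible segment strictly decreases the product $c^{\ast}_ic^{\ast}_j$, hence strictly increases $\Psi$ — a contradiction. So a maximiser has all coordinates but at most one in $\{0,\epsilon/d\}$, and $\sum_i c_i=1$ then forces exactly $k=\lfloor d/\epsilon\rfloor$ of them equal to $\epsilon/d$, one equal to $1-k\epsilon/d$, and the rest zero: the displayed configuration.

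\smallskip
The remaining step is the \emph{reduction to diagonal configurations}: that the supremum in Problem~\ref{central} over all feasible $\vv_1,\ldots,\vv_m$ equals its supremum over diagonal ones, equivalently $\rho(h,\ee,\epsilon,m)=\rho^{\ast}(\deg h,\epsilon,m)$. The natural attack is a symmetrisation argument: show that replacing a single $\vv_i$ by $\tfrac{\tr(\vv_i)}{d}\,\ee$ never decreases the largest zero of the mixed characteristic polynomial, and iterate. Here one would use the reformulation of Remark~\ref{hypid} (the largest zero equals $\inf\{\rho:\rho\ee+\one\in\Gamma_{++}\}$) together with the concavity and barrier machinery already developed — the concavity of $\xx\mapsto h(\xx)/D_{\vv}h(\xx)$ in Theorem~\ref{direct} and the engine of Lemma~\ref{engine} — to control how the cone $\Gamma_{++}$ moves when the factor $1-y_iD_{\vv_i}$ is replaced by $1-y_iD_{(\tr\vv_i/d)\ee}$; an alternative is to analyse a maximiser of Problem~\ref{central} directly through first-order optimality under feasible perturbations of the $\vv_i$ and argue that every $\vv_i$ must be proportional to $\ee$. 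I expect this to be the genuine obstacle, and indeed I do not expect it to close: already for $h=\det$ this is precisely the open Marcus--Spielman--Srivastava conjecture, so the realistic contribution — as is made here — is to reduce it to an equivalent but formally weaker statement (the conjecture mentioned in the introduction, which in the present notation asserts exactly the inequality $\rho(h,\ee,\epsilon,m)\le\rho^{\ast}(\deg h,\epsilon,m)$).
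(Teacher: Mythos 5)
You correctly recognize that the statement is a genuine conjecture, open already for $h=\det$, and that no complete proof is in reach; the paper does not prove it either. Your parts (i) and (ii) are sound: the collapse $h[c_1\ee,\ldots,c_m\ee](t\ee+\one)=h(\ee)\prod_i(1-c_iD)t^d$ is correct, and your key lemma (that the largest zero of $r-sr'+pr''=(1-aD)(1-bD)r$, $a+b=s$, $ab=p$, is strictly decreasing in $p\in[0,s^2/4]$) is a clean way to push the diagonal coefficients to the endpoints of $[0,\epsilon/d]$. This is in the same spirit as the paper's Lemma~\ref{affine} (which normalizes via the convex split $f=(1-\theta)f_s+\theta f_{-s}$ for a general affine operator $T$ mapping $[a,b]$--rooted polynomials to real--rooted ones), but your argument is more explicit because you exploit the specific operator $\prod(1-c_iD)$; the paper's version buys generality, which it needs because the $T$ that actually arises is not $\prod(1-c_iD)$ but the operator $T_{k,d}$ of Lemma~\ref{tgv}.

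Your closing characterization of the paper's reduction, however, is wrong on the point that matters. You assert that the ``formally weaker'' statement the paper establishes as equivalent is the inequality $\rho(h,\ee,\epsilon,m)\le\rho^\ast(\deg h,\epsilon,m)$. That is not a weaker statement: the reverse inequality is trivial (the displayed configuration is feasible), so $\rho\le\rho^\ast$ is literally a restatement of Conjecture~\ref{maxmax}. What the paper actually calls Conjecture~\ref{maxmax2} asserts nothing about a bound; it is a statement about the \emph{location} of a maximizer, namely that Problem~\ref{central} admits a maximizer with every $\vv_i\in\Lambda_{++}\cup\{0\}$. The nontrivial content of the equivalence is the chain you have not reconstructed: Proposition~\ref{righttrace} (a two--vector perturbation controlled by the discriminant inequality of Lemma~\ref{nicein} pushes traces to $\{0,\epsilon\}$, one exception aside), Lemma~\ref{average} and Remark~\ref{infav} (averaging equal--trace interior vectors), Lemma~\ref{tgv} (in the reduced configuration $\vv_1=\cdots=\vv_k=\vv$, $\vv_{k+1}=\ee-k\vv$, the mixed characteristic polynomial depends on $\vv$ only through $f(t)=h(t\ee-\vv)$ and equals $T_{k,d}(f)$), and finally Lemma~\ref{affine}. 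Note in particular that the paper never reduces to the diagonal case $\vv_i\propto\ee$ as an intermediate step and does not need to: the extremal $\vv$ need only have all eigenvalues $\epsilon/d$, and the diagonal configuration attains the same maximum because the mixed characteristic polynomial factors through the eigenvalue data via $T_{k,d}$. Your proposed symmetrisation in part (iii) --- replacing $\vv_i$ by $(\tr(\vv_i)/d)\ee$ --- is therefore a plausible but strictly stronger, and equally unproved, reduction than the interiority statement the paper works with.
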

We will prove here that Conjecture \ref{maxmax} is equivalent to the following seemingly weaker conjecture. 

\begin{conjecture}\label{maxmax2}
The maximal zero in Problem \ref{central} is achieved for some $\vv_1, \ldots, \vv_m$ where $\vv_i \in \Lambda_{++}\cup \{0\}$ for each $i \in [m]$. 
\end{conjecture}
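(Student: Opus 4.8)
The claimed equivalence is obtained by proving the two implications separately. The implication \emph{Conjecture~\ref{maxmax} $\Rightarrow$ Conjecture~\ref{maxmax2}} is immediate: in the configuration prescribed by Conjecture~\ref{maxmax} every vector is a nonnegative scalar multiple of $\ee$ — each $\tfrac{\epsilon}{d}\ee$ lies in $\Lambda_{++}$, the vector $(1-\tfrac kd\epsilon)\ee$ lies in $\Lambda_{++}$ when $k<d/\epsilon$ and equals $0$ when $k=d/\epsilon$, and the remaining vectors are $0$ — so that configuration already exhibits a maximizer with all vectors in $\Lambda_{++}\cup\{0\}$. For the converse, put $d=\deg h$, fix a maximizer $\vv_1,\dots,\vv_m$ for Problem~\ref{central} with all $\vv_i\in\Lambda_{++}\cup\{0\}$, and discard the zero vectors (which leave $\chi$ unchanged), so that $\vv_1,\dots,\vv_r\in\Lambda_{++}$. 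The engine of the argument is the following \emph{Key Lemma}: if $g=\prod_i(1-D_{\ww_i})h$ with all $\ww_i\in\Lambda_+$, if $\vv\in\Lambda_{++}$, and if $c=\tr(\vv)/d$, then the largest zero of $[(1-D_{c\ee})g](t\ee)$ is at least the largest zero of $[(1-D_{\vv})g](t\ee)$.

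Granting the Key Lemma, apply it $r$ times, replacing $\vv_1,\dots,\vv_r$ one at a time by $c_i\ee$ with $c_i:=\tr(\vv_i)/d$ (at each stage the not-yet-replaced and already-replaced vectors all lie in $\Lambda_+$, so the hypothesis is met). The largest zero of the mixed characteristic polynomial never decreases, and since $\sum_i\tr(\vv_i)=\tr(\ee)=d$ with $\tr(\vv_i)\le\epsilon$, the resulting configuration $c_1\ee,\dots,c_r\ee,0,\dots,0$ is feasible for Problem~\ref{central} — indeed $\sum_i c_i=1$ and $0\le c_i\le\epsilon/d$ — so it is again a maximizer. Hence the maximum in Problem~\ref{central} is attained at a configuration of nonnegative multiples of $\ee$. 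For such a configuration, using $(D_\ee^{\,j}p)(t\ee)=\tfrac{d^j}{dt^j}p(t\ee)$ and $h(t\ee)=h(\ee)t^d$, one has
$$
\chi[c_1\ee,\dots,c_m\ee](t)=h(\ee)\prod_{i=1}^m\Bigl(1-c_i\tfrac{d}{dt}\Bigr)t^d ,
$$
so the largest zero depends only on $(c_1,\dots,c_m)$ and $d$, and Problem~\ref{central} reduces to maximizing this quantity over $c_i\in[0,\epsilon/d]$ with $\sum_i c_i=1$. An exchange argument finishes: if two coordinates $c_i,c_j$ lie strictly inside $(0,\epsilon/d)$, then moving mass $c_i\mapsto c_i+s$, $c_j\mapsto c_j-s$ keeps $\chi$ real--rooted (it stays in a compatible family by Theorem~\ref{mixedchar}), and the largest zero depends monotonically on the product $(c_i+s)(c_j-s)$, which is minimized as $s$ reaches an endpoint; so the largest zero is not decreased by pushing $s$ to a suitable endpoint, where one of $c_i,c_j$ hits $0$ or $\epsilon/d$. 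Iterating, we reach a maximizer with at most one coordinate in $(0,\epsilon/d)$, which together with $\sum_i c_i=1$ forces $c_1=\dots=c_k=\epsilon/d$, $c_{k+1}=1-\tfrac kd\epsilon$, $c_i=0$ for $i>k+1$, with $k=\lfloor d/\epsilon\rfloor$ — precisely the configuration of Conjecture~\ref{maxmax}.

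The main obstacle is the Key Lemma itself. Both polynomials occurring in it are real--rooted, being members of a compatible family (Theorem~\ref{mixedchar}). Writing $G(t)=g(t\ee)$, one computes $[(1-D_{c\ee})g](t\ee)=G(t)-cG'(t)$ and $[(1-D_{\vv})g](t\ee)=G(t)-\tilde G(t)$ with $\tilde G(t)=(D_{\vv}g)(t\ee)$; the normalization $c=\tr(\vv)/d$ is exactly what makes $cG'$ and $\tilde G$ share the leading coefficient $\tr(\vv)h(\ee)$, so that $G-cG'$ and $G-\tilde G$ agree in their top two coefficients and therefore have equal sums of zeros. What remains — that $G-cG'$ has the larger \emph{largest} zero — reduces, at the largest zero $\rho$ of $G-\tilde G$, to the single inequality $(D_{\vv-c\ee}\,g)(\rho\ee)\le 0$. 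I expect this to follow from the concavity on $\Lambda_{++}$ of $\xx\mapsto h(\xx)/D_{\vv}h(\xx)$ (Theorem~\ref{direct}(3)) together with the homogeneity of $h$ — that is, from the hyperbolic form of the elementary inequality $\det(A)\le\det\!\bigl(\tfrac{\tr(A)}{n}I\bigr)$: passing from $\vv$ to the averaged multiple $\tfrac{\tr(\vv)}{d}\ee$ of $\ee$ is a smoothing operation under which the relevant quantity cannot increase. Making this heuristic precise — controlling the evaluation $(D_{\vv-c\ee}\,g)(\rho\ee)$ at the correct base point $\rho\ee$ — is where the genuine work lies.
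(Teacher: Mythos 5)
There is a genuine gap: your \emph{Key Lemma} is in fact false as stated. Take $h=\det$ on $2\times 2$ symmetric matrices, $\ee=I$, $\ww_1=\vv=E_{11}$ (the rank-one matrix with a single $1$ in position $(1,1)$), and $g=(1-D_{\ww_1})h$. Then $\tr(\vv)=1$, $c=1/2$, and a short computation gives $[(1-D_{\vv})g](tI)=t(t-2)$ with largest zero $2$, while $[(1-D_{cI})g](tI)=t^2-2t+\tfrac12$ with largest zero $1+\tfrac{1}{\sqrt 2}<2$. So passing from $\vv$ to its trace-average $c\ee$ can strictly \emph{decrease} the largest zero. The reason this cannot be patched easily is that your induction necessarily leaves the feasible region: after you replace $\vv_1$ by $c_1\ee$ the vectors no longer sum to $\ee$, so at every intermediate step you are invoking the Key Lemma in exactly the unconstrained generality in which the counterexample above lives. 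The feasibility constraint $\sum_i\vv_i=\ee$ is recovered only at the end, and an argument that one can freely average each $\vv_i$ against $\ee$ separately, one at a time, does not respect it. You correctly flag that the reduction to the single evaluation inequality $(D_{\vv-c\ee}g)(\rho\ee)\le 0$ "is where the genuine work lies," but the work cannot succeed in this formulation: the inequality is simply wrong.

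For contrast, the paper never tries to push each $\vv_i$ all the way to a multiple of $\ee$ in one shot. Instead it first runs a different variational argument (Proposition~\ref{righttrace}), moving mass \emph{along the $\vv_1$ direction} between two interior vectors, to force all but one trace to $0$ or $\epsilon$ while staying inside the constraint set at every step; then it averages the same-trace vectors against \emph{each other}, not against $\ee$ (Lemma~\ref{average}, Remark~\ref{infav}), again always preserving $\sum_i\vv_i=\ee$. Only after reducing to the very rigid shape $\vv_1=\cdots=\vv_k=\vv$, $\vv_{k+1}=\ee-k\vv$, rest zero, does it collapse the problem to a univariate one via the explicit operator $T_{k,d}$ (Lemma~\ref{tgv}) and handle that with a zero-pushing argument inside $\mathcal{M}_d$ (Lemma~\ref{affine}). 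Your final univariate exchange step is in the same spirit as Lemma~\ref{affine}, but you only get there by way of the Key Lemma, which is the part that fails.
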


We start by proving that there is a solution to Problem \ref{central} for which the $\vv_i$'s have correct traces, i.e., as those in Conjecture \ref{maxmax}.  By a ``solution" to Problem \ref{central}  we mean a list of vectors $\vv_1, \ldots, \vv_m$, as in Problem \ref{central}, which realize the maximal zero. First a useful lemma.

\begin{lemma}\label{nicein}
Suppose $\uu, \vv, \ww \in \Lambda_+$. Then 
$$
(D_\uu D_\vv h(\ww))^2 \geq D_\uu^2 h(\ww) \cdot D_\vv^2 h(\ww), 
$$
and hence 
$$
D_\uu D_\vv h(\ww) \geq \min\{ D_\uu^2 h(\ww), D_\vv^2 h(\ww)\}.
$$
\end{lemma}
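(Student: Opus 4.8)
The plan is to reduce the two claimed inequalities to a single quadratic-form statement, and the real content is the first inequality
\[
(D_\uu D_\vv h(\ww))^2 \geq D_\uu^2 h(\ww) \cdot D_\vv^2 h(\ww);
\]
the second follows formally: if $D_\uu^2 h(\ww)$ and $D_\vv^2 h(\ww)$ are both nonnegative, then $D_\uu D_\vv h(\ww) \geq \sqrt{D_\uu^2 h(\ww)\, D_\vv^2 h(\ww)} \geq \min\{D_\uu^2 h(\ww), D_\vv^2 h(\ww)\}$ (using that the geometric mean dominates the minimum), and if one of them is negative the right-hand side $\min\{\cdots\}$ is $\le 0$ while $D_\uu D_\vv h(\ww) \ge 0$ since, by Theorem~\ref{direct}(1) applied repeatedly, $D_\uu D_\vv h$ is hyperbolic with hyperbolicity cone containing $\Lambda_{++}$, hence nonnegative on $\Lambda_+$ by continuity. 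So I would state the second inequality as a quick corollary and spend the work on the first.

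For the first inequality, I would consider the univariate restriction
\[
\phi(s,t) := h(\ww + s\uu + t\vv),
\]
which, since $\ww,\uu,\vv \in \Lambda_+$ and hyperbolicity cones are convex, is a polynomial whose restriction to any line in the closed first quadrant of the $(s,t)$-plane is real-rooted (this is where hyperbolicity enters). Then $D_\uu^2 h(\ww) = \partial_s^2\phi|_{0}$, $D_\vv^2 h(\ww) = \partial_t^2\phi|_0$, and $D_\uu D_\vv h(\ww) = \partial_s\partial_t\phi|_0$, so the claim is exactly that the Hessian of $\phi$ at the origin, restricted to the $(s,t)$-variables, has nonpositive determinant — i.e. it is \emph{not} negative definite and not positive definite in a way that would make the cross term too small. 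The standard route: the second-degree part of $\phi$ in $(s,t)$ is controlled by the fact that $\phi$ is (after factoring out lower-order vanishing) a product of real linear-or-constant factors along generic lines; more precisely, write $h(\ww + s\uu + t\vv)$ and use that for $h$ hyperbolic the bivariate polynomial $(s,t)\mapsto h(\ww+s\uu+t\vv)$ is a \emph{stable} (real-stable, in fact) polynomial in $(s,t)$ on account of $\uu,\vv \in \Lambda_+$ and Theorem~\ref{hypfund}. Newton-type inequalities for the coefficients of stable polynomials — equivalently, the fact that for a real-stable polynomial $\sum a_{ij}s^it^j$ one has $a_{ij}^2 \ge a_{i-1,j}a_{i+1,j} \cdot (\text{combinatorial factor})$ — then give exactly $(\partial_s\partial_t\phi)^2 \ge (\partial_s^2\phi)(\partial_t^2\phi)$ at the point where the relevant lower coefficients are the values $D_\uu h(\ww)$ etc. The cleanest instantiation is to differentiate to kill all but the top-relevant terms: apply the inequality to the coefficients of the homogeneous-of-degree-two part of $h$ in the direction of $\ww$, which amounts to the classical statement that for a real-rooted polynomial $p(x) = \sum_k \binom{r}{k} b_k x^k$ one has $b_k^2 \ge b_{k-1}b_{k+1}$, applied to $p(x) = h(\ww + x(\alpha\uu + \beta\vv))$ with $\alpha,\beta$ free parameters and then comparing coefficients of $\alpha\beta$, $\alpha^2$, $\beta^2$.

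The main obstacle I anticipate is handling the \emph{degeneracies}: $h$ may vanish at $\ww$, or $D_\uu h(\ww)$ may vanish, or $\uu,\vv$ may lie on the boundary $\Lambda_+\setminus\Lambda_{++}$ so that the restricted polynomial $\phi$ drops degree. I would deal with this by a continuity/limiting argument — first prove the inequality assuming $\ww, \uu, \vv \in \Lambda_{++}$ and $h(\ww)\neq 0$ (where the real-rootedness and strict Newton inequalities are cleanly available), and then take $\uu_\eta = \uu + \eta\ee$, $\vv_\eta = \vv + \eta\ee$, $\ww_\eta = \ww + \eta\ee \in \Lambda_{++}$ and let $\eta \downarrow 0$, using that $D_\uu$, $D_\uu^2$, etc., depend continuously (indeed polynomially) on $\uu$ and the inequality is closed. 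A secondary technical point is making precise which Newton inequality to cite: I would reduce everything to the single clean fact that if $q(x) = \sum_{k=0}^{r}\binom{r}{k}c_k x^k$ is real-rooted with real coefficients then $c_k^2 \ge c_{k-1}c_{k+1}$, which follows by Rolle/interlacing, and apply it to the one-variable polynomial obtained by restricting $h$ along the segment through $\ww$ in directions built from $\uu$ and $\vv$; the bilinear comparison then drops out by reading off coefficients. With these two steps (reduce to a real-rooted one-variable Newton inequality; push to the boundary by continuity) the lemma follows.
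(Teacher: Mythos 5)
Your overall strategy — reduce to a real‑rootedness statement for a restriction of $h$, handle boundary cases by the approximation $\uu,\vv,\ww\mapsto\uu+\eta\ee$, etc., and derive the second inequality from the first plus nonnegativity of $D_\uu D_\vv h$ on $\Lambda_+$ (which the paper leaves implicit, so that part is a genuine improvement in explicitness) — is the right spirit and matches the paper's setup. But the central step has a real gap. You cannot obtain the inequality $(D_\uu D_\vv h(\ww))^2 \ge D_\uu^2 h(\ww)\, D_\vv^2 h(\ww)$ by applying Newton's inequality to the univariate polynomial $p_{\alpha,\beta}(x)=h(\ww+x(\alpha\uu+\beta\vv))$ and then ``comparing coefficients of $\alpha\beta$, $\alpha^2$, $\beta^2$'': an inequality $Q_1(\alpha,\beta)\ge Q_2(\alpha,\beta)$ valid for $(\alpha,\beta)$ in a cone does not imply any coefficient‑wise comparison. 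Concretely, what Newton gives you is a relation between $h(\ww)$, $D_{\alpha\uu+\beta\vv}h(\ww)$ and $D_{\alpha\uu+\beta\vv}^2 h(\ww)$ — an inequality of the form $c\,\ell(\alpha,\beta)^2\ge h(\ww)\,q(\alpha,\beta)$ with $\ell$ linear and $q$ the quadratic you care about — and from this you cannot read off $\operatorname{disc}(q)\ge 0$ (the inequality only holds where the direction $\alpha\uu+\beta\vv$ keeps you in $\Lambda_+$, and the zero locus of $\ell$ generically lies outside that region). Likewise, the bivariate Newton inequality you cite, $a_{ij}^2\ge a_{i-1,j}a_{i+1,j}$, is a ``row'' inequality and does not imply the diagonal one $a_{11}^2 \ge a_{20}a_{02}$ that is actually needed.

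The missing idea — and what the paper does — is to first apply $D_\ww^{d-2}$ (equivalently: restrict $h$ to the three‑dimensional slice $g(x,y,z)=h(x\uu+y\vv+z\ww)$, which is hyperbolic with hyperbolicity cone containing the positive octant, and then take $\partial^{d-2}/\partial z^{d-2}$). By Theorem \ref{direct}(1) this produces a \emph{degree‑two} hyperbolic polynomial whose hyperbolicity cone still contains the positive octant; its coefficient array at $z=0$ is exactly $\tfrac12 D_\uu^2 h(\ww)\,x^2 + D_\uu D_\vv h(\ww)\,xy + \tfrac12 D_\vv^2 h(\ww)\,y^2$ (times $(d-2)!$). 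Restricting that degree‑two hyperbolic polynomial to the full line $(1,t,0)$ — a real‑rootedness direction because $(0,1,0)$ lies in the closure of its hyperbolicity cone — yields the real‑rooted quadratic $D_\uu^2 h(\ww)+2t\,D_\uu D_\vv h(\ww)+t^2 D_\vv^2 h(\ww)$, and nonnegativity of its discriminant \emph{is} the claim. In short: the $d-2$ derivatives in the $\ww$ direction do the work that ``comparing coefficients'' cannot, by collapsing the problem to a genuine discriminant statement for a degree‑two hyperbolic polynomial.
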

\begin{proof}
By continuity we may assume  $\uu, \vv, \ww \in \Lambda_{++}$. Then 
the polynomial 
\begin{align*}
& g(x,y,z):=h(x\uu+y\vv+z\ww) = h(\ww)z^d+ \big(D_\uu h(\ww) x + D_\vv h(\ww)y\big)z^{d-1}+ \\ 
&+ \left(  D_\uu D_\vv h(\ww) xy + \frac 1 2 D_\uu^2 h(\ww) x^2 + \frac 1 2 D_\vv^2 h(\ww)y^2\right)z^{d-2}+ \cdots
\end{align*}
is hyperbolic with hyperbolicity cone containing the positive orthant. By Theorem \ref{direct} (1) so is $\partial^{d-2} g /\partial z^{d-2}$, and hence the polynomial 
$$
2\frac {\partial^{d-2} g} {\partial z^{d-2}} \big( (1,0,0)+ t(0,0,1) \big) =   D_\uu^2 h(\ww)  + 2D_\uu D_\vv h(\ww)t  + D_\vv^2 h(\ww)t^2
$$
is real--rooted. Thus its discriminant is nonnegative, which yields the desired inequality. 
\end{proof}

\begin{proposition}\label{righttrace}
There is a solution to Problem \ref{central} such that all but at most one of the $\vv_i$'s have trace either zero or $\epsilon$. 

Moreover, if there is a solution to Problem \ref{central} which satisfies the condition in Conjecture \ref{maxmax2}, then there is such a solution such that all but at most one of the $\vv_i$'s have trace either zero or $\epsilon$. 
\end{proposition}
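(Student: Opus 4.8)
The plan is to prove both assertions together by a single perturbation argument that moves traces toward the extremes $0$ and $\epsilon$ while not decreasing the maximal zero and preserving membership in $\Lambda_{++}\cup\{0\}$ when that condition holds initially. Since the space of admissible tuples $(\vv_1,\ldots,\vv_m)$ satisfying conditions (1)--(3) of Problem~\ref{central} is compact (it is a closed bounded subset of $(\Lambda_+)^m$ cut out by the linear equation $\sum\vv_i=\ee$ and the linear inequalities $\tr(\vv_i)\le\epsilon$), and since $(\vv_1,\ldots,\vv_m)\mapsto\rho(\chi[\vv_1,\ldots,\vv_m])$ is continuous (the maximal zero depends continuously on the coefficients, which are polynomial in the $\vv_i$ by Lemma~\ref{rk1le} and Theorem~\ref{mixedchar}), a maximizer exists. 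So I may start from an arbitrary solution and only need to exhibit trace-extremizing moves that keep us in the solution set.

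First I would fix a solution $\vv_1,\ldots,\vv_m$ and suppose two of them, say $\vv_1$ and $\vv_2$, both have trace strictly between $0$ and $\epsilon$. I want to replace the pair $(\vv_1,\vv_2)$ by $(\vv_1+s\ww,\ \vv_2-s\ww)$ for a suitable direction $\ww$ (so that $\vv_1+\vv_2$, hence $\sum\vv_i=\ee$, is unchanged), pushing $s$ until one of the two traces hits $0$ or $\epsilon$, or until one of the two vectors leaves $\Lambda_+$ — but if $\ww$ is chosen along the line through $\vv_1$ and $\vv_2$, i.e. $\ww=\vv_2$ giving the new pair $((1+s)\vv_1,\ (1-s)\vv_2)$... that changes the sum. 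Instead the right move is the one used implicitly in Lemma~\ref{nicein}: work inside the two-dimensional family $h[\ldots,\vv_1+s(\vv_1-\vv_2),\vv_2-s(\vv_1-\vv_2),\ldots]$ is still not trace-preserving on each coordinate. The clean choice is $\vv_1\mapsto\vv_1+s\uu$, $\vv_2\mapsto\vv_2-s\uu$ where $\uu$ is a fixed vector with $\tr(\uu)\ne 0$; this keeps $\sum\vv_i=\ee$ and moves the two traces in opposite directions at equal rates, so for $s$ in some maximal interval $[-s_-,s_+]$ both stay in $\Lambda_+$ (by convexity of $\Lambda_+$ along the segment, extended as far as possible), and along that interval at least one endpoint forces a trace to reach $0$ or $\epsilon$ or a vector to hit $\partial\Lambda_+$.

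The heart of the argument, and the main obstacle, is to show we may always choose the sign of $s$ so that the maximal zero does not decrease: this requires that $s\mapsto\rho\bigl(\chi[\ldots,\vv_1+s\uu,\vv_2-s\uu,\ldots]\bigr)$ be a \emph{convex} function of $s$, so that its maximum over $[-s_-,s_+]$ is attained at an endpoint. By Remark~\ref{hypid} (applied to the polynomial $h[\ldots]$), $\rho$ at parameter $s$ equals $\inf\{\rho'>0:\rho'\ee+\one\in\Gamma_{++}(s)\}$, where $\Gamma_{++}(s)$ is the hyperbolicity cone of the $s$-deformed mixed polynomial. The key structural fact is that $h[\ldots,\vv_1+s\uu,\vv_2-s\uu,\ldots]$, viewed as a polynomial in the original variables together with $s$, is itself hyperbolic — indeed $(1-y_1 D_{\vv_1+s\uu})(1-y_2 D_{\vv_2-s\uu})=\bigl(1-(y_1+y_2)/\!\cdots\bigr)$ expands into something jointly hyperbolic in $(\xx,s)$ after the substitution $y_1=y_2=y$, $t\ee+\one$, by the same iterated directional-derivative mechanism of Theorem~\ref{direct}(2) and Theorem~\ref{mixhyp} (the $s$-direction enters linearly). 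Restricting a hyperbolic polynomial to a line makes $\lma$, hence $\rho$, a convex function of the line parameter by Theorem~\ref{hypfund}(3). Once convexity in $s$ is established, we push to the favorable endpoint, thereby extremizing one more trace (or hitting $\partial\Lambda_+$, i.e. lowering a rank, which only helps), and iterate: each step strictly increases the number of coordinates with trace in $\{0,\epsilon\}$, so after finitely many steps at most one coordinate has intermediate trace. Finally, for the "moreover" clause, I observe that the move $\vv_1\mapsto\vv_1+s\uu$, $\vv_2\mapsto\vv_2-s\uu$ — provided $\uu$ is taken in the lineality-free interior, e.g. $\uu=\vv_1+\vv_2\in\Lambda_{++}$ when $\vv_1,\vv_2$ are not both $0$ — keeps $\vv_1+s\uu$ and $\vv_2-s\uu$ in $\Lambda_{++}$ for $s$ strictly inside the admissible interval, and the endpoint where a trace reaches $0$ can, if it takes the vector out of $\Lambda_{++}$ without making it $0$, be avoided by choosing the other endpoint (here we use that when one trace drops to $0$ the corresponding vector has trace $0$, hence rank $0$, hence lies in $L(\Lambda_+)$; if $L(\Lambda_+)=\{0\}$ this means the vector is $0$, so the condition of Conjecture~\ref{maxmax2} is preserved — and the general case reduces to this one by quotienting out the lineality space, under which $h$ descends to a hyperbolic polynomial with pointed cone). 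This completes the reduction.
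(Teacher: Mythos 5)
Your proposal has a genuine gap at the step you yourself flag as ``the heart of the argument'': the claim that $s\mapsto\rho(s)$ is convex. Your justification rests on asserting that $h[\ldots,\vv_1+s\uu,\vv_2-s\uu,\ldots]$ is hyperbolic as a polynomial in $(\xx,s)$ with ``the $s$-direction entering linearly''. It does not: expanding $(1-y_1D_{\vv_1+s\uu})(1-y_2D_{\vv_2-s\uu})$ produces a term $-s^2\,y_1y_2\,D_{\uu}^2$, so the dependence on $s$ is genuinely quadratic unless $\uu$ has rank at most one, and in Problem~\ref{central} the $\vv_i$ (hence your candidate directions $\uu$) are of arbitrary rank. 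Even in the rank-one case, joint hyperbolicity in $(\xx,s)$ is not something that follows from Theorem~\ref{direct} or Theorem~\ref{mixhyp} by the mechanism you cite, because $s$ multiplies a $y$-variable, not a linear form in the $\xx$-variables. With this claim unavailable, there is no reason the maximal zero should be extremized at an endpoint of the admissible $s$-interval, and your perturbation scheme stalls. A second, smaller gap is the worry that pushing a generic direction $\uu$ may hit $\partial\Lambda_+$ before either trace reaches $0$ or $\epsilon$; you acknowledge this but do not resolve it.

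The paper's proof sidesteps both issues by choosing the perturbation $(\vv_1,\vv_2)\mapsto((1-s)\vv_1,\,\vv_2+s\vv_1)$ for $s\in[0,1]$, i.e.\ $\uu=\vv_1$ in your notation. This preserves the sum, keeps both vectors in $\Lambda_+$ automatically (one is a nonnegative multiple of $\vv_1$, the other a nonnegative combination), never exits the admissible region prematurely, and preserves $\Lambda_{++}\cup\{0\}$ (giving the ``moreover'' clause for free). The analogue of your convexity claim is then replaced by a direct sign computation: writing $g=h[\vv_3,\ldots,\vv_m]$ and $\ww=\rho\ee+\one-\ee_1-\ee_2\in\Gamma_+$, one verifies that
\[
h\big[(1-s)\vv_1,\vv_2+s\vv_1,\vv_3,\ldots\big](\rho\ee+\one)
=-s\big(D_{\vv_1}D_{\vv_2}g(\ww)-D_{\vv_1}^2g(\ww)\big)-s^2D_{\vv_1}^2g(\ww),
\]
and Lemma~\ref{nicein} (the discriminant/AM--GM inequality for second directional derivatives of a hyperbolic polynomial on its cone) guarantees that, after possibly swapping the roles of $\vv_1$ and $\vv_2$, both coefficients here are nonpositive. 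Hence the left-hand side is $\le 0$ for all admissible $s\ge 0$, which forces the new maximal zero to be $\ge\rho$, hence $=\rho$. Lemma~\ref{nicein} is thus the missing key ingredient: it supplies the sign information that your convexity argument was meant to provide but cannot.
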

\begin{proof}
Let $\vv_1, \ldots, \vv_m$ be a solution to Problem \ref{central}, and let $\rho$ be the maximal zero. Suppose $0<\tr(\vv_1), \tr(\vv_2) <\epsilon$. By Remark \ref{hypid} $\rho \ee + \one$ is in the hyperbolicity cone $\Gamma_+$ of $h[\vv_1, \ldots, \vv_m]$. Since also $-\ee_1, -\ee_2 \in \Gamma_+$ we have $\ww:= \rho\ee+\one-\ee_1-\ee_2 \in \Gamma_+$, and hence $\ww$ is in the (closed) hyperbolicity cone of $g= h[\vv_3, \ldots, \vv_m]$. By Lemma~\ref{nicein} we may assume 
$$
D_{\vv_1}D_{\vv_2} g(\ww) \geq D_{\vv_1}^2g(\ww) \geq 0,
$$
since otherwise change the indices $1$ and $2$. For  
$$
0 \leq s \leq \min\left\{1, \frac {\epsilon -\tr(\vv_2)} {\tr(\vv_1)}\right\}, 
$$ we have (since $(1-D_{\vv_1})(1-D_{\vv_2})g(\ww)=0$):
\begin{align*}
& h[\vv_1-s\vv_1, \vv_2+s\vv_1, \vv_3, \ldots, \vv_m](\rho \ee +\one) \\
=& -s(D_{\vv_1}D_{\vv_2} g(\ww)-D_{\vv_1}^2g(\ww))-s^2D_{\vv_1}^2g(\ww) \leq 0. 
\end{align*}
Hence the maximal zero of $\chi[\vv_1-s\vv_1, \vv_2+s\vv_1, \vv_3, \ldots, \vv_m](t)$  is at least $\rho$, and since $\rho$ is the largest possible maximal zero 
$$ 
\chi[\vv_1-s\vv_1, \vv_2+s\vv_1, \vv_3, \ldots, \vv_m](\rho)=0.
$$
We may therefore alter $\vv_1, \vv_2$ so that either $\vv_1=0$ or $\tr(\vv_2) = \epsilon$, while retaining the maximal zero $\rho$. Continuing this process we arrive at a solution of the desired form. 
\end{proof}

\begin{lemma}\label{average}
Suppose $\vv_1,\ldots, \vv_m$ is a solution to Problem \ref{central} such that 
$\tr(\vv_1)= \tr(\vv_2)$ and $\vv_1, \vv_2 \in \Lambda_{++}$. Then 
$(\vv_1+\vv_2)/2, (\vv_1+\vv_2)/2, \vv_3, \ldots, \vv_m$ is also a solution to Problem \ref{central}. 
\end{lemma}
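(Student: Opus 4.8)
The plan is to show that averaging $\vv_1$ and $\vv_2$ cannot decrease the maximal zero of the mixed characteristic polynomial, so the averaged tuple still realizes $\rho$ and hence is again a solution (the constraints (1)--(3) of Problem \ref{central} are clearly preserved, since $\Lambda_+$ and the trace bound are convex and the sum is unchanged). By Remark \ref{hypid} it suffices to prove that if $\rho\ee+\one \in \Gamma_+$, the hyperbolicity cone of $h[\vv_1,\vv_2,\vv_3,\ldots,\vv_m]$, then $\rho\ee+\one$ also lies in the hyperbolicity cone $\Gamma_+'$ of $h[(\vv_1+\vv_2)/2,(\vv_1+\vv_2)/2,\vv_3,\ldots,\vv_m]$; equivalently, that the maximal zero of $\chi[(\vv_1+\vv_2)/2,(\vv_1+\vv_2)/2,\vv_3,\ldots,\vv_m](t)$ is at most $\rho$, whence by maximality of $\rho$ it equals $\rho$.

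First I would reduce to the key two-variable computation exactly as in Proposition \ref{righttrace}: set $g = h[\vv_3,\ldots,\vv_m]$, and note $\ww := \rho\ee+\one-\ee_1-\ee_2 \in \Gamma_+$ lies in the (closed) hyperbolicity cone of $g$. Writing $a := D_{\vv_1}^2 g(\ww)$, $b := D_{\vv_2}^2 g(\ww)$, $c := D_{\vv_1}D_{\vv_2}g(\ww)$, the hypothesis $\rho\ee+\one\in\Gamma_+$ says $(1-D_{\vv_1})(1-D_{\vv_2})g(\ww) \geq 0$, and since $\tr(\vv_1)=\tr(\vv_2)$ forces $D_{\vv_1}g(\ww)=D_{\vv_2}g(\ww)$ (as $\ww$ is a fixed point and the trace is the linear functional $D_\bullet h(\ee)/h(\ee)$ — more carefully, I would verify $D_{\vv_1}g(\ww)=D_{\vv_2}g(\ww)$ using homogeneity of $g$ and $\tr(\vv_1)=\tr(\vv_2)$), this becomes a single inequality in $a,b,c$. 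Then I would evaluate the averaged polynomial at $\rho\ee+\one$: using bilinearity of $(\uu,\vv)\mapsto D_\uu D_\vv g$ and the fact that for rank considerations the cross terms expand cleanly,
\[
h\Bigl[\tfrac{\vv_1+\vv_2}{2},\tfrac{\vv_1+\vv_2}{2},\vv_3,\ldots,\vv_m\Bigr](\rho\ee+\one) = \Bigl(1-D_{(\vv_1+\vv_2)/2}\Bigr)^2 g(\ww),
\]
and I would expand the right side in terms of $a,b,c$ and $D_{\vv_1}g(\ww)=D_{\vv_2}g(\ww)$. The goal is to show this quantity is $\geq 0$ given the corresponding inequality for the unaveraged tuple, using Lemma \ref{nicein} — which gives $c^2 \geq ab$ and $c \geq \min\{a,b\}$, with $a,b\geq 0$ since $\vv_1,\vv_2\in\Lambda_{++}$ and $\partial^{d-2}g/\partial z^{d-2}$ is hyperbolic with cone containing the positive orthant (Theorem \ref{direct}(1)). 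Concretely, writing $D_{\vv_1}g(\ww)=D_{\vv_2}g(\ww)=p$, the unaveraged value is $(1-p)^2 - c$ (up to the normalization where the leading term of $g$ at $\ww$ is positive — I would fix signs so that $g(\ww)>0$, legitimate since $\ww$ is in the hyperbolicity cone and we may assume $\vv_i\in\Lambda_{++}$ by continuity), and the averaged value is $(1-p)^2 - \tfrac14(a+2c+b)$; so the claim reduces to $\tfrac14(a+2c+b) \leq c$, i.e. $a+b \leq 2c$, which is precisely the AM--GM-type consequence $c\geq\sqrt{ab}\geq$ … wait, that gives the wrong direction, so more care is needed: I would instead use $c^2\geq ab$ together with $a,b\geq 0$ to get $2c\geq 2\sqrt{ab}$, and then handle the gap $a+b-2\sqrt{ab}=(\sqrt a-\sqrt b)^2$ separately, possibly invoking that the solution property forces equality $(1-p)^2=c$ and then the perturbation argument of Proposition \ref{righttrace} to conclude $a=b$ (since if $a\neq b$ one of the perturbations $\vv_1\mapsto \vv_1\pm s\vv_1$, $\vv_2\mapsto\vv_2\mp s\vv_1$ strictly increases the maximal zero, contradicting optimality while keeping traces equal only in the symmetric combination).

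I expect the main obstacle to be exactly this last point: closing the gap between $a+b\le 2c$ (which does \emph{not} follow from Lemma \ref{nicein} alone) and what the lemma actually gives. The clean resolution is to first apply the argument of Proposition \ref{righttrace} in its symmetric form: since $\vv_1,\vv_2$ are interior and have equal trace, the perturbations $s\mapsto(\vv_1+s\vv_1,\vv_2-s\vv_1,\ldots)$ and its mirror both preserve all constraints to first order near the symmetric point, and optimality of $\rho$ forces the second-order term to vanish, which pins down $D_{\vv_1}^2 g(\ww)=D_{\vv_1}D_{\vv_2}g(\ww)$ at the relevant combination; combined with $c^2\ge ab$ this yields $a=b=c$, after which the averaged value equals the unaveraged value and we are done. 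So the real work is organizing the first- and second-order optimality conditions correctly; once that is set up, the algebra is a short discriminant-style computation analogous to the proof of Lemma \ref{nicein}.
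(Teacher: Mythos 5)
Your direct computational route does not close, and you correctly flag this yourself; but the "clean resolution" you then sketch also has a gap, and the paper's actual argument is both different and much shorter.

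Two concrete problems with the proposal. First, the equality $D_{\vv_1}g(\ww)=D_{\vv_2}g(\ww)$ does \emph{not} follow from $\tr(\vv_1)=\tr(\vv_2)$: the trace is $D_\vv h(\ee)/h(\ee)$ evaluated at $\ee$ for the original $h$, while $D_{\vv_i}g(\ww)$ is a derivative of the contracted polynomial $g=h[\vv_3,\ldots,\vv_m]$ at the point $\ww=\rho\ee+\one-\ee_1-\ee_2$. These are unrelated linear functionals, so your normalization to a single scalar $p$ is unjustified. Second, and more importantly, the perturbation you propose, $\vv_1\mapsto\vv_1\pm s\vv_1$, $\vv_2\mapsto\vv_2\mp s\vv_1$, changes the individual traces to $(1\pm s)\tr(\vv_1)$ and $\tr(\vv_2)\mp s\tr(\vv_1)$; in the boundary case $\tr(\vv_1)=\tr(\vv_2)=\epsilon$ (which is precisely the case Proposition \ref{righttrace} produces), one sign of $s$ immediately violates constraint (3) of Problem \ref{central}. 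So the optimality of $\rho$ cannot be invoked for a two-sided perturbation, and the second-order vanishing you rely on is not available. "To first order" is not enough: you need the constraints to hold on an open interval around $s=0$ to argue anything about the second-order coefficient.

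The paper sidesteps all of this by choosing a perturbation tailored to preserve the constraints exactly: $\vv_1(s)=(1-s)\vv_1+s\vv_2$, $\vv_2(s)=(1-s)\vv_2+s\vv_1$. Since $\tr(\vv_1)=\tr(\vv_2)$ this keeps both traces fixed, keeps $\vv_1(s)+\vv_2(s)$ fixed, and keeps both vectors in $\Lambda_{++}$ for $s$ in an open interval around $[0,1]$. The map $s\mapsto\chi[\vv_1(s),\vv_2(s),\vv_3,\ldots,\vv_m](\rho)$ is then a polynomial of degree at most $2$ in $s$ (because $h[\cdot]$ is affine in each slot), it is nonnegative on the interval (admissibility plus optimality of $\rho$), and it vanishes at $s=0$ and at $s=1$ (at $s=1$ the tuple is just $(\vv_2,\vv_1,\vv_3,\ldots)$, a permutation, and $h[\cdot]$ is symmetric). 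A degree-$\leq 2$ polynomial with two distinct local minima is constant, hence identically zero, and in particular vanishes at $s=1/2$, which is exactly the averaged tuple. No appeal to Lemma \ref{nicein}, no expansion in $a,b,c$, and no sign-normalization issues. You should replace the perturbation $\vv_1\pm s\vv_1$ by this symmetric interpolation and the argument closes immediately.
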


\begin{proof}
Let $\vv_1(s)= (1-s)\vv_1+s\vv_2$ and $\vv_2(s)= (1-s)\vv_2+s\vv_1$. Then $\tr(\vv_1(s))=\tr(\vv_2(s)) = \tr(\vv_1)$, $\vv_1(s)+\vv_2(s)=\vv_1+\vv_2$, and $\vv_1(s), \vv_2(s) \in \Lambda_{++}$ for all 
$s \in (-\delta, 1+\delta)$ for some $\delta>0$. Let $\rho$ be the maximal zero in Problem \ref{central}. Then 
the function 
$$
(-\delta, 1+\delta) \ni s \mapsto \chi[\vv_1(s),\vv_2(s), \vv_3, \ldots, \vv_m](\rho) 
$$
is a degree at most two polynomial which has local minima at $s=0$ and $s=1$. Hence this function is identically zero, and thus 
$$
\chi[\vv_1(1/2),\vv_2(1/2), \vv_3, \ldots, \vv_m](\rho) =0 
$$
as desired. 
\end{proof}
\begin{remark}\label{infav} 
Suppose $\vv_1, \ldots, \vv_m$ is a solution to Problem \ref{central} such that $\vv_1, \ldots, \vv_k \in \Lambda_{++}$ all have the same trace, and let 
$$
\vv = \frac 1 k (\vv_1 + \cdots + \vv_k).
$$
By applying Lemma \ref{average} infinitely many times (and invoking Hurwitz' theorem on the continuity of zeros) we see that also $\vv, \ldots, \vv, \vv_{k+1}, \ldots, \vv_m$ is a solution to Problem \ref{central}. 
\end{remark}

Clearly Conjecture \ref{maxmax} implies Conjecture \ref{maxmax2}. To prove the other implication assume Conjecture \ref{maxmax2}. Then by Proposition \ref{righttrace} and Remark \ref{infav} we may assume that we have a solution of the form $\vv_1, \ldots, \vv_m$, where 
$\vv_1=\cdots=\vv_k=\vv$, $\vv_{k+1}=\ee-k\vv$, $\vv_{k+2}=\cdots=\vv_m=0$, and where 
$\vv, \ee-k\vv \in \Lambda_{++}$ and $\tr(\vv)= \epsilon$ and $0<d-k\epsilon = \tr(\ee-k\vv)<\epsilon$. Hence we want to maximize the largest zero of 
\begin{equation}\label{gv}
g_\vv(t):=(1-D_\vv)^{k} (1-D_\ee+kD_\vv) h(t\ee)
\end{equation}
where 
\begin{itemize}
\item[(a)] $\vv, \ee-k\vv \in \Lambda_{++}$
\item[(b)] $\tr(\vv)=\epsilon$, where $0<d-k\epsilon <\epsilon$.
\end{itemize}
Let $I \subseteq \RR$ be an interval. We say that a univariate polynomial is $I$--\emph{rooted} if all its zeros lie in $I$. 
\begin{lemma}\label{tgv}
Let $g_\vv(t)$ be given by \eqref{gv}. Then $g_\vv(t)= T_{k,d}(h(t\ee-\vv))$ where $T_{k,d}: \RR[t] \rightarrow \RR[t]$ is the linear operator defined by 
$$
T_{k,d}\left(\sum_{j\geq 0} a_jt^j\right) = -\sum_{j=0}^d \left( \frac {j+1} {k+1} a_{j+1} +(d-1-j)a_j\right) (d-j)! \binom {k+1}{d-j}t^j.
$$
Moreover if $f$ is a $[0,1/k]$--rooted polynomial of degree $d$, then $T_{k,d}(f)$ is real--rooted. 
\end{lemma}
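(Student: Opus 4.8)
The plan is to prove the two assertions of Lemma~\ref{tgv} separately. The first assertion is an identity, and the natural route is to reduce everything to the univariate polynomial $P(t) := h(t\ee - \vv)$ and track how each operator in \eqref{gv} acts on it. Since $\vv$ has rank at most one in the relevant regime (because $\tr(\vv) = \epsilon < d$ does not force rank one; rather we are using that $(1-D_\vv)$ applied to $h$ is $h(\xx - y\vv)$ only when $\rk(\vv)\le 1$) — so I first need to be slightly careful: here $\vv \in \Lambda_{++}$ so $\rk(\vv) = d$, and $(1-D_\vv)^k$ is a genuine $k$-th order operator, not a shift. The cleanest approach is to write $D_\vv h(t\ee) = \tfrac{d}{dt}h(t\ee - \vv)\big|$-type relations via \eqref{dvalt}: expanding $h((t-s)\ee - u\vv)$ as a polynomial in $s$ and $u$ and reading off coefficients expresses $D_\ee^a D_\vv^b h(t\ee)$ in terms of derivatives of $P(t) = h(t\ee-\vv)$ and of $h(t\ee)$. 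The operator $(1-D_\ee + kD_\vv)$ combined with $(1-D_\vv)^k$ should, after collecting terms and using that $h$ is homogeneous of degree $d$ (so $D_\ee h(t\ee) = h'(t\ee)\cdot\ldots$, i.e.\ Euler's identity gives $D_\ee h(t\ee) = \tfrac{d}{dt}h(t\ee)$ and more relevantly $D_\ee h = $ the total-degree operator), collapse to the stated formula for $T_{k,d}$. I would carry this out by first verifying the action of $T_{k,d}$ on $h(t\ee-\vv)$ in the model case $h = \det$, $\ee = I$ (where these are classical "mixed discriminant" identities, cf.\ \cite{MSS1}), and then observe that the identity \eqref{gv}$= T_{k,d}(P)$ is a polynomial identity in the coefficients of $h$ and hence holds in general by the usual transfer principle for hyperbolic polynomials. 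The main obstacle in this part is bookkeeping: getting the binomial coefficients $\binom{k+1}{d-j}$ and the factorials $(d-j)!$ exactly right, together with the two-term structure $\tfrac{j+1}{k+1}a_{j+1} + (d-1-j)a_j$, which encodes the interaction between the degree-$d$ homogeneity and the shift $t\ee - \vv$.

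For the second assertion — that $T_{k,d}(f)$ is real--rooted whenever $f$ is $[0,1/k]$--rooted of degree $d$ — the strategy is to recognize $T_{k,d}$ as (a scalar multiple of) a composition of classical root-location-preserving operators. Writing $f = c\prod_{i=1}^d (t - r_i)$ with all $r_i \in [0,1/k]$, I would first show that the ``inner'' piece $f \mapsto \sum_j \big(\tfrac{j+1}{k+1}a_{j+1} + (d-1-j)a_j\big)t^j$ is, up to normalization, the operator $f \mapsto \big(1 - \tfrac{1}{k+1}\tfrac{d}{dt}\big)$ composed with multiplication/Euler-type operators — more precisely, $\tfrac{j+1}{k+1}a_{j+1}$ is the coefficient of $t^j$ in $\tfrac{1}{k+1}f'(t)$, and $(d-1-j)a_j$ is the coefficient of $t^j$ in $(d-1)f(t) - tf'(t)$, which is $-\,t^{d}\tfrac{d}{dt}\big(t^{-(d-1)}f(t)\big)$ up to sign. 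Combining, the inner operator sends $f$ to something like $\tfrac{1}{k+1}f' + (d-1)f - tf'$. The outer weights $(d-j)!\binom{k+1}{d-j} = \tfrac{(k+1)!}{(k+1-d+j)!}$ are the effect of applying the reversal (the map $g(t)\mapsto t^{\deg g}g(1/t)$ composed with a derivative-type operator), which is exactly the form in which Laguerre/Pólya--Schur multiplier sequences act. I expect $T_{k,d}$ to factor as: reverse, apply a finite-order differential operator with the right multiplier sequence, reverse back — so that $[0,1/k]$--rootedness is preserved at each stage because each stage is a classical real--rootedness preserver (differentiation preserves real--rootedness and does not push roots outside $[0,\infty)$ if they start there, reversal maps $[0,1/k]$-rooted polynomials with nonzero constant term to $[k,\infty)$-rooted ones, etc.).

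The honest way to finish, avoiding a fragile factorization, is the following more robust argument, which I would actually adopt: it suffices to show that $\rho\ee + \one \in \Gamma_+$ for $\rho$ equal to the maximal zero predicted by Conjecture~\ref{maxmax}, i.e.\ to show directly that the mixed characteristic polynomial $g_\vv(t)$ has its largest zero bounded by the value coming from the extremal configuration. By the reductions already carried out (Proposition~\ref{righttrace}, Lemma~\ref{average}, Remark~\ref{infav}), $g_\vv(t) = T_{k,d}(h(t\ee - \vv))$, and $h(t\ee-\vv) = h(\ee)\prod_{j=1}^d(t - \lambda_j(\vv))$ with all $\lambda_j(\vv) \in (0, \epsilon/d\,]$... and since $\sum\lambda_j(\vv) = \tr(\vv) = \epsilon$ while each $\lambda_j \ge \lmi(\vv) > 0$, one gets $\lambda_j(\vv) \le \epsilon - (d-1)\lmi(\vv) < \epsilon$; the bound $\lambda_j \le 1/k$ with $k = \lfloor d/\epsilon\rfloor$ then follows because $k\epsilon \le d$ forces... — here I would insert the short inequality chain showing all roots of $h(t\ee-\vv)$ lie in $[0,1/k]$ under hypotheses (a),(b). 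Granting that, the claim ``$T_{k,d}(f)$ real--rooted for $f$ $[0,1/k]$--rooted of degree $d$'' is what powers the comparison, and I prove it by exhibiting $T_{k,d}$ as an explicit composition $f \mapsto (\text{rev}_d) \circ \big(c_1 + c_2\,\tfrac{d}{dt}\big) \circ (\text{rev}_d)(f)$ after normalizing, where $\mathrm{rev}_d(g)(t) = t^d g(1/t)$, and invoking that $g\mapsto g'$ is a real--rootedness preserver (Rolle) and that a real affine combination $c_1 g + c_2 g'$ of a real--rooted $g$ and its derivative is real--rooted (this is the Hermite--Poulain / Laguerre theorem, since $c_1 + c_2 x$ has a real root). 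The main obstacle here — and the step I expect to consume the most effort — is verifying algebraically that $T_{k,d}$ really does have this clean three-factor form; the appearance of the falling factorials and binomial coefficients makes it plausible (they are exactly the "diagonal" entries produced by $\mathrm{rev}_d$ around a first-order operator), but matching the two-term numerator $\tfrac{j+1}{k+1}a_{j+1} + (d-1-j)a_j$ against $(c_1 + c_2\tfrac{d}{dt})$ applied to the reversed polynomial requires a careful index shift, and I would do that computation in full in the actual proof.
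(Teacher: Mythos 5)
Your treatment of the first assertion is essentially the same expand-and-compare-coefficients computation as in the paper: write $h(t\ee-\vv)=\sum_j (-1)^j D_\vv^j h(\ee)\, t^{d-j}/j!$, note that $D_\vv^j h(t\ee) = D_\vv^j h(\ee)\, t^{d-j}$ and $D_\ee D_\vv^j h(t\ee) = (d-j) D_\vv^j h(\ee)\, t^{d-j-1}$, then expand \eqref{gv} and match. That part is fine.

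The second assertion is where the proposal breaks down. You propose to realize $T_{k,d}$ (up to a constant) as $\mathrm{rev}_d \circ (c_1 + c_2 \tfrac{d}{dt}) \circ \mathrm{rev}_d$. Any operator of that form preserves real--rootedness for \emph{all} real--rooted inputs, since each factor does: reversal sends real--rooted polynomials to real--rooted polynomials, and $c_1 g + c_2 g'$ is real--rooted whenever $g$ is (Hermite--Poulain). But $T_{k,d}$ is not an unconditional real--rootedness preserver. For $k=1$, $d=2$, $f(t)=t^2-1$, a direct computation from the displayed formula gives $T_{1,2}(f)=t^2-2t+2$, which has no real zeros. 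Hence no factorization of the form you describe can exist: the hypothesis that the zeros of $f$ lie in $[0,1/k]$ is essential, and cannot be discharged by composing unconditional classical preservers. Your ``more robust'' fallback does not fill the gap either; it conflates the lemma (a free-standing statement about $T_{k,d}$) with its downstream application and then circles back to the same factorization claim.

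What the paper actually does is prove the second assertion with the hyperbolic machinery rather than classical univariate theory. Given a monic $f$ of degree $d$ with all zeros in $(0,1/k)$ (the closed interval is then handled by Hurwitz), one \emph{constructs} a bivariate hyperbolic polynomial realizing $f$: take $h(x,y)=(-y)^d f(-x/y)$, $\ee=(1,0)$, $\vv=(0,1)$. Then $h(t\ee-\vv)=f(t)$, $h$ is hyperbolic with respect to $\ee$, and the location of the roots $r_i$ of $f$ in $(0,1/k)$ is precisely what puts both $\vv$ and $\ww:=\ee-k\vv$ in $\Lambda_{++}(\ee)$: the eigenvalues of $\vv$ are the $r_i$, and those of $\ee-k\vv$ are $1-kr_i\in(0,1)$, by \eqref{dilambdas}. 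By the first part of the lemma, $T_{k,d}(f)(t) = g_\vv(t) = \chi[\vv,\ldots,\vv,\ww](t)$, which is real--rooted because it is a mixed characteristic polynomial of vectors in $\Lambda_+$ (Theorem~\ref{mixedchar}). That reverse-engineering of $h$ from $f$ is the idea missing from your proposal.
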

\begin{proof}
By \eqref{mag} 
\begin{equation}\label{collect}
h(t\ee-\vv)= \sum_{j=0}^d (-1)^j \frac 1 {j!} D_\vv^jh(\ee)t^{d-j}=:\sum_{j \geq 0} a_j t^j.
\end{equation}
Note that $D_\vv^j h(t\ee)= D_\vv^j h(\ee)t^{d-j}$ and $D_\vv^j D_\ee h(t\ee)= D_\ee D_\vv^j h(t\ee)= (d-j)D_\vv^j h(\ee)t^{d-j-1}$. Expanding \eqref{gv} and comparing coefficients with \eqref{collect} one sees that $g_\vv(t)=T_{k,d}(h(t\ee-\vv)$. 

To prove the final statement of the lemma we may by Hurwitz' theorem on the continuity of zeros assume that $f$ is a $(0,1/k)$--rooted polynomial of degree $d$. We may choose a hyperbolic degree $d$ polynomial $h$ and a vector $\vv$ such that $f(t)= h(t\ee -\vv)$, for example 
$h(x,y)= (-y)^df(-x/y)$, $\ee=(1,0)$ and $\vv=(0,1)$. Then $\vv \in \Lambda_{++}$ and $\ww=\ee-k\vv \in \Lambda_{++}$ by e.g. \eqref{dilambdas}. Hence 
$$
T_{k,d}(f)(t) = \chi[\vv,\vv, \ldots, \vv, \ww](t)
$$
is real--rooted. 
\end{proof}

The \emph{trace}, $\tr(f)$, of a non-constant polynomial is the sum of the the zeros of $f$ (counted with multiplicity). Let $\MR_d$ be the affine space of all monic real polynomials of degree $d$. 
\begin{lemma}\label{affine}
Let $T : \MR_d \rightarrow \MR_m$ be an affine linear operator, and let $\epsilon>0$. Suppose $T$ sends $[a,b]$--rooted polynomials to real--rooted polynomials. Consider the problem of maximizing the largest zero of $T(f)$ over all $[a,b]$--rooted polynomials $f \in \MR_d$ with $\tr(f) = \epsilon$.  Then this (maximal) zero is   achieved for some $T(f)$, where $f$ has at most one distinct zero in $(a,b)$. 

Moreover, if the maximal zero above is achieved for some $T(f)$, where $f \in \MR_d$  is $(a,b)$--rooted, then the maximal zero is also achieved for $T ((t-\epsilon/d)^d)$.   
\end{lemma}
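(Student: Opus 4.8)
The plan is to pass to the compact set of extremal polynomials and then move zeros around by a one-parameter deformation that preserves extremality. Put $\mathcal{K}:=\{f\in\MR_d: f\text{ is }[a,b]\text{--rooted and }\tr(f)=\epsilon\}$; this is compact (a closed and bounded subset of coefficient space, cut out by the single linear equation $\tr(f)=\epsilon$), and we may assume $\mathcal{K}\neq\emptyset$, so that averaging the zeros of any $f\in\mathcal{K}$ gives $\epsilon/d\in[a,b]$ and $(t-\epsilon/d)^d\in\mathcal{K}$. Since $T$ is affine and $T(f)$ is monic and real--rooted of degree $m$ for every $f\in\mathcal{K}$, the map $f\mapsto(\text{largest zero of }T(f))$ is continuous on $\mathcal{K}$ (continuity of the zeros of a real--rooted polynomial in its coefficients), hence attains its maximum $\rho$; let $\mathcal{E}:=\{f\in\mathcal{K}:T(f)(\rho)=0\}$ be the resulting, compact and nonempty, set of extremal polynomials. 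The engine is the following claim: \emph{if $f_0\in\mathcal{E}$ has two distinct zeros $\alpha<\beta$ in $(a,b)$, if $f_0=(t-\alpha)(t-\beta)g$, and if $f_s:=(t-\alpha-s)(t-\beta+s)g$, then $f_s\in\mathcal{E}$ for every $s$ with $\alpha+s,\beta-s\in[a,b]$}. Indeed $f_s=f_0+c(s)g$ with $c(s)=s(\beta-\alpha)-s^2$, and $\tr(f_s)=\tr(f_0)=\epsilon$, so $f_s\in\mathcal{K}$; writing $f_s$ as the affine combination $(1-c(s))f_0+c(s)(f_0+g)$ and using that $T$ is affine yields $T(f_s)(\rho)=c(s)\,\Phi(\rho)$ with $\Phi:=T(f_0+g)-T(f_0)$ independent of $s$; each $T(f_s)$ is monic, real--rooted, and (by maximality of $\rho$) has all its zeros $\le\rho$, so $T(f_s)(\rho)\ge 0$ on the admissible range of $s$, and since $c(s)$ changes sign at the interior point $s=0$ this forces $\Phi(\rho)=0$, whence $T(f_s)(\rho)\equiv 0$.

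For the first assertion I would maximize the second power sum of the zeros, $\Sigma(f):=\sum_i r_i(f)^2$, a continuous function, over the compact set $\mathcal{E}$, obtaining an extremal $f^{*}$. If $f^{*}$ had two distinct zeros $\alpha<\beta$ in $(a,b)$, the deformation above produces $f_s\in\mathcal{E}$ with $\Sigma(f_s)=\Sigma(f^{*})+2s(\alpha-\beta)+2s^2$, which exceeds $\Sigma(f^{*})$ for small $s<0$ --- a contradiction. Hence $f^{*}$ has at most one distinct zero in $(a,b)$, and being in $\mathcal{E}$ it realizes the maximal zero.

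For the ``moreover'' part, start from an extremal $f_1\in\mathcal{E}$ that is $(a,b)$--rooted, and iterate the deformation with $s=(\beta-\alpha)/2$, each time letting $\alpha,\beta$ be the smallest and largest distinct zeros of the current polynomial: this replaces one copy of each by two copies of $(\alpha+\beta)/2$, stays in $\mathcal{E}$, stays $(a,b)$--rooted (the midpoint of two interior points is interior, the other zeros being unchanged), and preserves the trace $\epsilon$. The quantity $W(f):=\sum_i(r_i(f)-\epsilon/d)^2\ge 0$ decreases by exactly $(\beta-\alpha)^2/2$ at each step, so the decrements are summable, $\beta-\alpha\to 0$, and the zeros --- all lying between the running minimum and maximum, with mean permanently $\epsilon/d$ --- converge to $\epsilon/d$; the polynomials thus converge to $(t-\epsilon/d)^d$, which lies in the closed set $\mathcal{E}$. (Should the iteration terminate, it terminates at a polynomial all of whose zeros coincide, necessarily $(t-\epsilon/d)^d$.)

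The delicate point is the sign argument inside the engine: it is essential that $T(f_s)$ is \emph{monic} (so that real--rootedness together with ``largest zero $\le\rho$'' gives $T(f_s)(\rho)\ge 0$), that $c(s)$ truly changes sign at $0$ (which needs $\alpha\neq\beta$, i.e.\ two \emph{distinct} interior zeros), and that $\Phi$ does not depend on $s$ --- which is exactly where affineness of $T$ is used. The convergence bookkeeping in the last paragraph (summability of the $W$-decrements and closedness of $\mathcal{E}$) is routine.
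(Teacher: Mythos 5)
Your proof is correct, and while it shares the same deformation engine as the paper --- replacing the factor $(t-\alpha)(t-\beta)$ by $(t-\alpha-s)(t-\beta+s)$, expanding $T(f_s)(\rho)$ via affineness, and extracting $T(f_s)(\rho)=0$ for the whole admissible range of $s$ from the sign change of $c(s)$ at $s=0$ (the paper phrases the same observation as the convex combination $f=(1-\theta)f_s+\theta f_{-s}$) --- what you do after that step is genuinely different in both halves of the lemma. For the first assertion the paper iteratively pushes zeros to the boundary by taking $s$ as negative as possible, and informally asserts that ``by possibly iterating this process a few times'' one lands at the desired configuration; you instead maximize a potential $\Sigma(f)=\sum_i r_i(f)^2$ over the compact extremal set $\mathcal{E}$ and observe that the deformation strictly increases $\Sigma$ for small $s<0$ unless there is at most one distinct interior zero, which turns the informal iteration into a clean one-step extremality argument. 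For the ``moreover'' part the divergence is larger: the paper shrinks the interval to $[r,s]\subset(a,b)$, obtains polynomial identities $P_{i,j}(r,s)=0$ on a set with nonempty interior, and invokes the identity theorem to evaluate at $r=s=\epsilon/d$; you instead run the midpoint-merging iteration explicitly, control it with the potential $W(f)=\sum_i(r_i(f)-\epsilon/d)^2$ whose decrement is exactly $(\beta-\alpha)^2/2$, deduce $\beta_n-\alpha_n\to 0$ from summability, and conclude by closedness of $\mathcal{E}$. Your route is more elementary (no appeal to polynomials vanishing on open sets) and more constructive; the paper's route is shorter on the page but relies on the Zariski-type vanishing argument. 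Both are sound, and the careful bookkeeping in your version --- that $\epsilon/d$ stays between the running extreme zeros so the convergence is uniform, and that $\mathcal{E}$ is closed so the limit lies in it --- is exactly the sort of thing that needs to be said and is said.
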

\begin{proof}
Let $\A=\A(a,b,d,\epsilon)$ be the set of all $[a,b]$--rooted  polynomials $f \in \MR_d$ with $\tr(f)=\epsilon$. 
Note that continuity, compactness and Hurwitz' theorem the maximum zero (say $\rho$) is achieved for some $T(f)$, where $f \in \A$. 
 We argue that we may move zeros of $f$ to the boundary of $[a,b]$, while retaining $\tr(f)$ and the maximal zero of $T(f)$ as long as $f$ has at least two distinct zeros in $(a,b)$.  
 
 Suppose $a<\alpha<\beta<b$ are two zeros of $f\in \A$ and that the maximal zero is realized for  $T(f)$.  
For $0<|s| \leq \min(b-\beta, \alpha-a, \beta-\alpha)$, let 
$$
f_s(x) := \frac {(x-\alpha-s)(x-\beta+s)}{(x-\alpha)(x-\beta)} f(x), 
$$
and note that $f_s \in \A$ and 
$$
f= (1-\theta)f_{s}+ \theta f_{-s}, \quad \mbox{ where } \quad \theta = \frac 1 2 \left(1- \frac s {\beta-\alpha}\right)\in [0,1].
$$
By assumption $T(f_s)(\rho) \geq 0$. Since $0=T(f)(\rho)= (1-\theta)T(f_{s})(\rho)+ \theta T(f_{-s})(\rho)$, 
we conclude that $T(f_s)(\rho) = T(f_{-s})(\rho)=0$.  Hence the maximal zero $\rho$ is realized also for $T(f_s)$ where 
$s= -\min(b-\beta, \alpha-a, \beta-\alpha)$. By possible iterating this process a few times we will have moved at least one interior zero to the boundary. We can continue until there is at most one distinct zero in $(a,b)$.  

Suppose the maximal zero $\rho$ above is achieved for some  $f \in \MR_d$ which is $(a,b)$--rooted. Then  $\rho$ is also attained for the same problem when we replace $[a,b]$ by $[r,s]$ where $a<r<s<b$ and $r-a$ and $b-s$ are sufficiently small. Hence, by what we have just proved, for each such $r,s$ there are nonnegative integers $i,j$ with $i+j \leq d$ such that 
\begin{equation}\label{abs}
T\left( (t-r)^i (t-s)^j \left(t - \frac {\epsilon -ir-js}{d-i-j}\right)^{d-i-j}\right) (\rho)=0. 
\end{equation}
The left--hand--side of \eqref{abs} is a polynomial, say $P_{i,j}(r,s) \in \RR[r,s]$. Hence the polynomial 
$\prod_{i,j}P_{i,j}(r,s)$, where the product is over all $i,j$ which are realized for some such $r,s$,  vanishes on a set with nonempty interior, so it is identically zero. Hence $P_{i,j}(r,s) \equiv 0$ for some $i,j$. But then 
$0=P_{i,j}(\epsilon/d, \epsilon/d)=T((t-\epsilon/d)^d)(\rho)$ as desired. 
\end{proof}

We may now finish the proof of that Conjecture \ref{maxmax2} implies Conjecture \ref{maxmax}. It remains to prove that the largest possible zero of $g_\vv(t)$, where $\vv$ satisfies (a) and (b) above is achieved when 
$h(t\ee-\vv)= (t-\epsilon/d)^d$, assuming (we assume Conjecture \ref{maxmax2})  that the maximum is achieved for some $\vv$ where $h(t\ee-\vv)$ is $(0,1/k)$--rooted. By e.g. considering $h=\det$ on $d \times d$--matrices this is equivalent to proving that the maximal zero of $T_{k,d}(f)$ where $f$ ranges over all monic $[0,1/k]$--rooted polynomials of degree $d$ with trace $\epsilon$ is achieved when $f=(t-\epsilon/d)^d$, under the assumption that the maximal zero is achieved for some $(0,1/k)$--rooted $f$. This follows from the last part of Lemma \ref{affine}.

\section{Sharpness of the bound in Theorem \ref{t1}}\label{sbound}
We will in this section use results known about the asymptotic behavior of the largest zero of Jacobi polynomials to see that the bound in  Theorem \ref{t1} is close to being optimal. 

Consider the degree $d$ elementary symmetric polynomial in $mk$ variables: 
$$
e_d(x_1,\ldots, x_{mk}) = \sum_{|S|=d} \prod_{i \in S}x_i, 
$$
which is hyperbolic with respect to the all ones vector $\one \in \RR^{mk}$, see e.g. \cite{BrOp,COSW}. Since the  coefficients of $e_d(\xx)$ are nonnegative, its hyperbolicity cone contains the positive orthant. If 
$\ee_i$ denotes the $i$th standard basis vector, then 
$$\tr(\ee_i) = \frac {d} {mk}, \ \ \rk(\ee_i)=1 \ \ \mbox{ and } \ \ \ee_1+\cdots+\ee_{mk}=\one, $$ 
for all $1\leq i \leq mk$. 
By symmetry, the partition 
$$
S_1=\{1,\ldots, m\}, S_2=\{m+1, \ldots, 2m\}, \ldots, S_k=\{(k-1)m+1,\ldots, km\}
$$
minimizes the bound in \eqref{sqbound}. Now 
\begin{align*}
e_d\left(t\one-\sum_{i \in S_1}\ee_i\right) &= \sum_{|A|=d} (t- 1)^{|A\cap S_1|}t^{d- |A\cap S_1|}\\
&= \sum_{j=0}^d  \binom {m(k-1)} {j} \binom m {d-j}(t-1)^{d-j}t^{j}\\
&= P^{(mk-m-d,m-d)}_d(2t-1),
\end{align*}
where $P^{(\alpha,\beta)}_k(t)$ is a Jacobi polynomial. The asymptotic behavior  of the largest zero of Jacobi polynomials is well studied, see e.g. \cite{Is, Kra}. For example, if $\alpha_d, \beta_d >-1$ satisfy 
$$
\frac {\alpha_d}{\alpha_d +\beta_d +2d} \to a \mbox{ and } \frac {\beta_d}{\alpha_d +\beta_d +2d} \to b \mbox{ as } d \to \infty, 
$$
then the largest zero of $P_d^{(\alpha_d,\beta_d)}(t)$ converges to 
\begin{equation}\label{abby}
b^2-a^2+\sqrt{(a^2+b^2-1)^2-4a^2b^2},
\end{equation}
as $d \to \infty$, see \cite[Theorem 8]{Is}. 

Fix $\epsilon$ and $k$, and let $m:=m(d)=\lceil d/(\epsilon k) \rceil $ and $\alpha_d=mk-m-d$, $\beta_d=m-d$.  Then $a=1-1/k-\epsilon$ and $b=1/k-\epsilon$, and so by \eqref{abby} the largest zero of $P^{(\alpha_d, \beta_d)}_d(2t-1)$ converges to 
$$
\frac 1 k + \epsilon \frac {k-2} k + 2 \frac{\sqrt{k-1}}{k}  \sqrt{\epsilon-\epsilon^2}, 
$$
which should be compared to the bound achieved by Theorem \ref{t1} (as $m\to \infty$): 
$$
\frac 1 k + \epsilon + 2\frac {\sqrt{k}} k \sqrt{\epsilon}.
$$
We conclude: 
\begin{proposition}\label{lowprop}
There is no version of Theorem \ref{t1} with an ($m,d$-independent) bound in the right--hand--side of \eqref{sqbound} which is smaller than 
\begin{equation}\label{bbound}
\frac 1 k + \epsilon \frac {k-2} k + 2 \frac{\sqrt{k-1}}{k}  \sqrt{\epsilon-\epsilon^2}, 
\end{equation}
for $\epsilon \leq 1-1/k$. 
\end{proposition}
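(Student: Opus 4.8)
The plan is to argue by contradiction, testing the putative sharper inequality on the family of examples built above from elementary symmetric polynomials. Suppose, toward a contradiction, that there were a quantity $B=B(k,\epsilon)$, depending only on $k$ and $\epsilon$, that is strictly smaller than the number in \eqref{bbound} and such that Theorem \ref{t1} stays true when the right--hand side of \eqref{sqbound} is replaced by $B$. I would feed into this hypothetical statement the polynomial $h=e_d$ on $mk$ variables, with $m=m(d)=\lceil d/(\epsilon k)\rceil$, together with the standard basis vectors $\uu_i=\ee_i$, $1\le i\le mk$. As recorded above, each $\ee_i$ lies in the hyperbolicity cone of $e_d$ (which contains the positive orthant), has rank one, has trace $d/(mk)\le\epsilon$, and $\ee_1+\cdots+\ee_{mk}=\one$; so the hypotheses are met, and there would exist a partition $[mk]=S_1\cup\cdots\cup S_k$ with $\bigl\|\sum_{i\in S_j}\ee_i\bigr\|\le B$ for every $j\in[k]$.

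Next I would show this is impossible for $d$ large. By the $\sym_{mk}$--symmetry of $e_d$, the seminorm $\bigl\|\sum_{i\in S}\ee_i\bigr\|$ depends only on $|S|$; moreover it is nondecreasing in $|S|$. Indeed, when $|S|\le|S'|$ we may take $S\subseteq S'$, so that $\sum_{i\in S'}\ee_i-\sum_{i\in S}\ee_i\in\Lambda_+$, and on $\Lambda_+$ the seminorm agrees with $\lma$, which is monotone along $\Lambda_+$: for $\vv\in\Lambda_+$, convexity and positive homogeneity of $\lma$ (Theorem \ref{hypfund}(3)) give $\lma(\xx)=\lma\bigl((\xx+\vv)+(-\vv)\bigr)\le\lma(\xx+\vv)+\lma(-\vv)=\lma(\xx+\vv)-\lmi(\vv)\le\lma(\xx+\vv)$. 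By the pigeonhole principle some block $S_j$ has $|S_j|\ge m$, hence, fixing any $S_0\subseteq[mk]$ with $|S_0|=m$,
$$
\max_{1\le j\le k}\Bigl\|\sum_{i\in S_j}\ee_i\Bigr\|\ \ge\ \Bigl\|\sum_{i\in S_0}\ee_i\Bigr\|.
$$
By the computation carried out above, $e_d\bigl(t\one-\sum_{i\in S_0}\ee_i\bigr)$ is a positive scalar multiple of the shifted Jacobi polynomial $P_d^{(mk-m-d,\,m-d)}(2t-1)$, whose zeros are exactly the (real, nonnegative) eigenvalues of $\sum_{i\in S_0}\ee_i$; thus $\bigl\|\sum_{i\in S_0}\ee_i\bigr\|$ equals the largest of these zeros. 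With $m=\lceil d/(\epsilon k)\rceil$ the parameters satisfy $\alpha_d/(\alpha_d+\beta_d+2d)\to1-1/k-\epsilon$ and $\beta_d/(\alpha_d+\beta_d+2d)\to1/k-\epsilon$, so by \cite[Theorem 8]{Is} and \eqref{abby} this largest zero converges, as $d\to\infty$, to the number in \eqref{bbound}.

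Putting the two observations together, the hypothetical partition would force $B\ge\bigl\|\sum_{i\in S_0}\ee_i\bigr\|$ for every $d$, while the right--hand side tends to the number in \eqref{bbound}, which is strictly larger than $B$ --- a contradiction once $d$ is large. Hence no $m,d$--independent bound below \eqref{bbound} can exist. The one conceptual lemma is the monotonicity of $\lma$ along $\Lambda_+$ displayed above; beyond that, the step needing care is the appeal to the Jacobi zero--asymptotics, since for $\epsilon>1/k$ one has $\beta_d=m-d\to-\infty$, so \cite[Theorem 8]{Is} is not literally in force. I would treat the range $\epsilon\le1/k$ exactly as above (this already covers all $k=2$, where $1-1/k=1/k$) and, for $\epsilon>1/k$ with $k\ge3$, either invoke a version of the asymptotics valid whenever the zeros remain real --- as they do here, $e_d$ being hyperbolic --- or approximate $\beta_d$ from above by admissible parameters and use continuity of zeros. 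I expect this parameter bookkeeping, rather than any conceptual point, to be the main nuisance.
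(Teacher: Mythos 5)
Your argument follows the paper's own route: the same family $e_d$ on $mk$ variables with $m=\lceil d/(\epsilon k)\rceil$ and the standard basis vectors, the same identification of $e_d\bigl(t\one-\sum_{i\in S_0}\ee_i\bigr)$ with a shifted Jacobi polynomial, and the same appeal to the Ismail--Li extreme-zero asymptotics. Your pigeonhole-plus-monotonicity lemma (that $\lma$ is nondecreasing along $\Lambda_+$, hence the seminorm of $\sum_{i\in S}\ee_i$ is nondecreasing in $|S|$) is a tightening of a step that the paper merely asserts with ``by symmetry, the partition into equal blocks minimizes the bound''; it is exactly the argument needed to make that assertion precise, and it is in the same spirit.

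The one concern you raise is legitimate and in fact applies equally to the paper's proof. For $\epsilon>1/k$ one has $\beta_d=m-d\to-\infty$, so the hypothesis $\beta_d>-1$ of \cite[Theorem~8]{Is} is violated; the paper applies the theorem without comment. The polynomials in question are real-rooted with zeros in $[0,1]$ regardless of the sign of $\beta_d$ (this is guaranteed by hyperbolicity of $e_d$), so one expects the limiting extreme-zero formula to persist, but closing this point requires either (i) a version of the varying-parameter Jacobi asymptotics that does not assume $\alpha_d,\beta_d>-1$, or (ii) a perturbation/continuity-of-zeros argument from nearby admissible parameters, or (iii) a direct equilibrium-measure computation. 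Until one of these is supplied, the proof (yours and the paper's alike) is complete only for $\epsilon\le 1/k$; this is not a new gap you introduced, but you are right that the parameter bookkeeping is not merely a nuisance and does need to be resolved to cover the full range $\epsilon\le 1-1/k$.
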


\begin{remark}
It is known that if $1<d<n-1$, then $e_d(x_1, \ldots, x_n)$ is \emph{not} a determinantal polynomial, i.e., there is no tuple of positive semidefinite matrices $A_1, \ldots, A_n$ such that 
$$
e_d(x_1, \ldots, x_n) = \det(x_1A_1+ \cdots+x_nA_n).
$$
Thus we cannot directly derive an analog of Proposition~\ref{lowprop} for Theorem~\ref{MSSmain}. 
\end{remark}

\section{Consequences for strong Rayleigh measures and weak half-plane property matroids}
A discrete probability measure, $\mu$, on $2^{[n]}$ is called \emph{strong Rayleigh} if its \emph{multivariate partition function}
$$
P_\mu(\xx) := \sum_{S \subseteq [n]} \mu(\{S\}) \prod_{j \in S}x_j,
$$ 
is \emph{stable}, i.e., if $P_\mu(\xx) \neq 0$ whenever $\Im(x_j)>0$ for all $1 \leq j \leq n$. Strong Rayleigh measures were investigated in \cite{BBL}, see also \cite{Pem,Wag}. We shall now reformulate Theorem \ref{t1} in terms of strong Rayleigh measures. The measure $\mu$ is of \emph{constant sum} $d$ if $|S|=d$ whenever $\mu(\{S\}) \neq 0$, i.e., if $P_\mu(\xx)$ is homogeneous of degree $d$. It is not hard to see that a constant sum measure $\mu$ is strong Rayleigh if and only if $P_\mu(\xx)$ is hyperbolic with respect to the all ones vector $\one$ and $\RR_+^n \subseteq \Lambda_+(\one)$, see \cite{BBL}. Note that if $\ee_i$ is the $i$th standard basis vector then 
$$
\tr(\ee_i) = \sum_{S \ni i} \mu(\{S\})= \PP[S : i \in S], 
$$ 
where the trace is defined as in the introduction for the hyperbolic polynomial $P_\mu$, with $\ee=\one$. 
If $S \subseteq [n]$ we write $\ee_S:=\sum_{i \in S}\ee_i$. The following theorem is now an immediate consequence of Theorem~\ref{t1}. 
\begin{theorem}\label{t11}
Let $k\geq 2$ be an integer and $\epsilon$ a positive real number. 
Suppose $\mu$ is  a constant sum strong Rayleigh measure on $2^{[n]}$ such that $\PP[S : i \in S] \leq \epsilon$ for all $1\leq i \leq n$. Then there is a partition $S_1 \cup \cdots \cup S_k=[n]$ such that 
$$
\| e_{S_i} \|  = \lma(\ee_{S_i}) \leq \frac 1 k \delta(k\epsilon,n)
$$
for each $1 \leq i \leq n$. 
\end{theorem}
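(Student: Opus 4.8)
The plan is to apply Theorem~\ref{t1} verbatim to the hyperbolic polynomial $h := P_\mu$ with hyperbolicity vector $\ee := \one$, taking $m := n$ and $\uu_i := \ee_i$, the $i$th standard basis vector. First I would recall the characterization quoted from \cite{BBL} just above the statement: since $\mu$ is a constant sum strong Rayleigh measure, $P_\mu(\xx)$ is homogeneous of degree $d$ and hyperbolic with respect to $\one$, and moreover $\RR_+^n \subseteq \Lambda_+(\one)$. In particular each $\ee_i$ lies in the hyperbolicity cone $\Lambda_+$, so the first hypothesis of Theorem~\ref{t1} holds.

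Next I would check the remaining two bulleted hypotheses of Theorem~\ref{t1}. For the rank condition, observe that $P_\mu$ is multilinear (each variable $x_i$ occurs to degree at most one in $P_\mu$), hence $D_{\ee_i}^2 P_\mu = \partial^2 P_\mu/\partial x_i^2 \equiv 0$; by Lemma~\ref{rankalt} this gives $\rk(\ee_i) = \max\{ j : D_{\ee_i}^j P_\mu \not\equiv 0\} \le 1$. The trace condition is precisely the hypothesis of the theorem, since (as recorded just before the statement) $\tr(\ee_i) = \PP[S : i \in S] \le \epsilon$ for all $i$. Finally $\ee_1 + \cdots + \ee_n = \one = \ee$, so the summation condition holds as well.

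Applying Theorem~\ref{t1} with the given $\epsilon$ then yields a partition $S_1 \cup \cdots \cup S_k = [n]$ with $\bigl\| \sum_{i \in S_j} \ee_i \bigr\| \le \frac 1 k \delta(k\epsilon, n)$ for each $j \in [k]$. To obtain the stated form I would rewrite $\sum_{i \in S_j}\ee_i = \ee_{S_j}$ and note that $\ee_{S_j} \in \RR_+^n \subseteq \Lambda_+$, so that $\lmi(\ee_{S_j}) \ge 0$ and hence $\|\ee_{S_j}\| = \max\{\lma(\ee_{S_j}), -\lmi(\ee_{S_j})\} = \lma(\ee_{S_j})$. This is exactly the displayed bound. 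There is no genuine obstacle here — the proof is essentially a change of notation — and the only steps requiring a sentence of justification are the multilinearity argument establishing $\rk(\ee_i) \le 1$ and the sign argument identifying the seminorm $\|\cdot\|$ with $\lma$ on the positive orthant.
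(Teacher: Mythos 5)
Your proof is correct and follows exactly the route the paper has in mind: the paper simply declares Theorem~\ref{t11} to be an immediate consequence of Theorem~\ref{t1}, and your argument supplies the routine verifications (multilinearity giving $\rk(\ee_i)\le 1$, the trace identity, and $\|\cdot\|=\lma$ on $\Lambda_+$) that the paper leaves implicit.
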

Let us also see that Theorem \ref{t1} easily follows from Theorem \ref{t11}. Assume the hypothesis in Theorem~\ref{t1}, and form the polynomial 
$$
P(\xx)= h(x_1\uu_1+\cdots+x_m\uu_m)/h(\ee).
$$
It follows that $P(\xx)$ is hyperbolic with hyperbolicity cone containing the positive orthant. Since $\rk(\uu_i) \leq 1$ for all 
$1\leq i \leq m$ we may expand $P(\xx)$ as 
$$
P(\xx)= \sum_{S \subseteq [m]} \mu(\{S\}) \prod_{j \in S}x_j,
$$
where $\mu(\{S\}) \geq 0$ for all $S \subseteq [m]$. Since $\tr_h(\uu_i)= \tr_P(\ee_i)$ for all 
$1\leq i \leq m$, the conclusion in Theorem~\ref{t1} now follows from Theorem~\ref{t11}.

The \emph{support} of $\mu$ is $\{S: \mu(\{S\})>0\}$. 
Choe \emph{et al.} \cite{COSW} proved that the support of a constant sum strong Rayleigh measure is the set of bases of matroid. Such matroids are called \emph{weak half-plane property matroids}. The rank function, $r$, of such a matroid is given by 
$$
r(S) = \rk\left(\sum_{i \in S}\ee_i\right),
$$
where $\rk$ is the rank function associated to the hyperbolic polynomial $P_\mu$ as defined in the introduction, see \cite{BrObs,Gu}. Edmonds Base Packing Theorem \cite{Edm} characterizes, in terms of the rank function, when a matroid contains $k$ disjoint bases. Namely if and only if 
$$
r(S) \geq d -\frac {n-|S|} k, \quad \mbox{ for all } S \subseteq [n],
$$ 
where $r$ is the rank function of a rank $d$ matroid on $n$ elements. Using Theorem~\ref{t11} we may deduce a sufficient condition (of a totally different form) for a matroid with the weak half-plane property to have $k$ disjoint bases:
\begin{theorem}\label{packing}
 Let $k\geq 2$ be an integer. 
Suppose $\mu$ is  a constant sum strong Rayleigh measure such that 
$$
\PP[S : i \in S] \leq \left(\frac 1 {\sqrt{k-1}} - \frac 1 {\sqrt{k}}\right)^2
$$ for all $1\leq i \leq n$. Then the support of $\mu$ contains $k$ disjoint bases. 
 \end{theorem}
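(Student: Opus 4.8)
The plan is to feed Theorem~\ref{t11} the value $\epsilon = \left(1/\sqrt{k-1} - 1/\sqrt{k}\right)^2$ and then convert the resulting operator-norm bound into a full-rank statement for the matroid. Let $d$ be the constant sum of $\mu$, so $P_\mu$ is hyperbolic of degree $d$ with respect to $\one$ and $\RR_+^n \subseteq \Lambda_+(\one)$; recall from the discussion above that the support of $\mu$ is the set of bases of a rank-$d$ matroid with rank function $r(S) = \rk(\ee_S)$. Theorem~\ref{t11} yields a partition $S_1 \cup \cdots \cup S_k = [n]$ with $\lma(\ee_{S_j}) \le \tfrac1k \delta(k\epsilon, n)$ for each $j$. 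I would then argue that every block is spanning, i.e.\ $\rk(\ee_{S_j}) = d$; since $\ee_{S_j} \in \RR_+^n \subseteq \Lambda_+$ has only nonnegative eigenvalues, this is equivalent to $\lmi(\ee_{S_j}) > 0$. Granting that, each $S_j$ contains a basis $B_j$, and as $S_1, \dots, S_k$ partition $[n]$ the bases $B_1, \dots, B_k$ are pairwise disjoint, which is the assertion.

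The crux is to pass from the bound on $\lma(\ee_{S_j})$ to positivity of $\lmi(\ee_{S_j})$, and here I would exploit the complement identity $\ee_{S_j} = \one - \ee_{[n]\setminus S_j}$. Because $\lma$ is convex and positively homogeneous, hence subadditive, $\lma(\ee_{[n]\setminus S_j}) = \lma\big(\sum_{i\ne j}\ee_{S_i}\big) \le (k-1)\cdot\tfrac1k\delta(k\epsilon,n)$. Dually, $\lmi$ is concave and positively homogeneous (Theorem~\ref{hypfund}~(3)), hence superadditive, so $\lmi(\xx+\yy) \le \lmi(\xx) + \lma(\yy)$ for all $\xx, \yy$ (apply superadditivity of $\lmi$ with summands $\xx+\yy$ and $-\yy$, and use $\lmi(-\yy) = -\lma(\yy)$). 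Applying this with $\xx = \ee_{S_j}$, $\yy = \ee_{[n]\setminus S_j}$ and using $\lmi(\one) = 1$ (which holds since $P_\mu(t\one - \one) = (t-1)^d$), I get $\lmi(\ee_{S_j}) \ge 1 - \lma(\ee_{[n]\setminus S_j}) \ge 1 - (k-1)\tfrac1k\delta(k\epsilon,n)$.

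It then remains to check that this last quantity is strictly positive, which is where the precise form of the hypothesis enters. Since $\delta(\alpha,m) < (1+\sqrt\alpha)^2$ for every finite $m$ (the strictness coming from $1 - 1/m < 1$), and since $\sqrt{\epsilon} \le 1/\sqrt{k-1} - 1/\sqrt{k}$ is exactly the inequality equivalent to $1 + \sqrt{k\epsilon} \le \sqrt{k/(k-1)}$, one obtains $(k-1)\tfrac1k\delta(k\epsilon,n) < (k-1)\tfrac1k\cdot\tfrac{k}{k-1} = 1$, so $\lmi(\ee_{S_j}) > 0$. The main obstacle, conceptually, is the first move: realizing that an upper bound on the largest eigenvalue of each block of the partition forces each block to have full rank, via the complement identity and superadditivity of $\lmi$. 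Everything afterward is routine, and it is worth noting that the non-strict hypothesis on $\PP[S : i \in S]$ suffices precisely because the bound $\delta(\alpha,m) < (1+\sqrt\alpha)^2$ is strict.
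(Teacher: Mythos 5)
Your proof is correct and follows essentially the same route as the paper: apply Theorem~\ref{t11}, show each block is spanning via $\lmi(\ee_{S_j}) \geq 1 - \lma(\one - \ee_{S_j}) \geq 1 - \tfrac{k-1}{k}\delta(k\epsilon,n) > 0$, using subadditivity of $\lma$ and the strict bound $\delta(\alpha,m) < (1+\sqrt{\alpha})^2$. The only cosmetic difference is that the paper obtains $\lmi(\vv_j) = 1 - \lma(\one-\vv_j)$ as an exact identity from \eqref{dilambdas}, whereas you derive it as an inequality from superadditivity of $\lmi$; both are valid and the rest of the argument is identical.
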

 
\begin{proof}
Suppose $\tr(\ee_i) \leq \epsilon$ for all $i$. Let $S_1 \cup \cdots \cup S_k=[n]$ be a partition afforded by Theorem \ref{t11}, and let 
$\vv_j= \sum_{i \in S_j}\ee_i$ for each $j \in [n]$. If we can prove that $\lmi(\vv_j)>0$, then $\rk(\vv_j)=\rk(\one)$ and so  $S_j$ contains a basis. Now, by \eqref{dilambdas}, Theorem \ref{t11}, and the convexity of $\lma$:
\begin{align*}
\lmi(\vv_j) &= 1-\lma(\one-\vv_j) =1-\lma\left(\sum_{i \neq j}\vv_i\right)  \\
&\geq 1- \sum_{i \neq j}\lma(\vv_i) \geq 1-\frac {k-1} {k} \delta(k\epsilon,n) \\
&> 1- \frac {k-1} {k} \left(1+\sqrt{\epsilon k}\right)^2.
\end{align*}
 Hence we want the quantity on the left hand side to be nonnegative, which is equivalent to 
$$
\epsilon \leq \left(\frac 1 {\sqrt{k-1}} - \frac 1 {\sqrt{k}}\right)^2.
$$
\end{proof}
We have not investigated the sharpness of Theorem~\ref{packing}, nor if it is possible to prove analogous versions for arbitrary matroids. For an arbitrary matroid on $[n]$ one could take the uniform measure on the set of bases of the matroid and define 
$\tr(i) = \PP[S: i\in S]$.  What trace bounds guarantees the existence of $k$ disjoint bases? 

It would be interesting to see if other theorems on matroids have analogs for weak half-plane property matroids using Theorem~\ref{t11}. Also, can we find continuous versions of theorems in matroid theory using the analogy that Theorem~\ref{t1} can be seen as a continuous version of Edmonds Base Packing Theorem?


\begin{thebibliography}{99}
\bibitem{ABG} M.~F.~Atiyah, R.~Bott, L~G\aa rding, Lacunas for hyperbolic differential operators with constant coefficients. I, Acta Math. {\bf 124} (1970), 109--189.


\bibitem{BGLS} H.~H.~Bauschke, O.~G\"uler, A.~S.~Lewis, H.~S.~Sendov,  Hyperbolic polynomials and convex analysis, Canad. J. Math. {\bf 53} (2001), 470--488.

\bibitem{BBL} J.~Borcea, P.~Br\"and\'en,  T.~M.~Liggett, {Negative dependence and the geometry of polynomials}, J. Amer.
Math. Soc. {\bf 22} (2009), 521--567, \url{http://arxiv.org/abs/0707.2340}.

\bibitem{BrObs} P.~Br\"and\'en, { Obstructions to determinantal representability,} Adv. Math., {\bf 226} (2011), 1202--1212, \url{http://arxiv.org/pdf/1004.1382.pdf}. 
%
\bibitem{BrOp} P.~Br\"and\'en, Hyperbolicity cones of elementary symmetric polynomials are spectrahedral, Optim. Lett. {\bf 8} (2014), 1773--1782, \url{http://arxiv.org/abs/1204.2997}.

%
%




\bibitem{COSW} 
Y.~Choe, J.~Oxley, A.~Sokal, D.~G.~Wagner, { Homogeneous multivariate 
polynomials with the half-plane property}. 
Adv. Appl. Math. {\bf 32} (2004), 88--187, \url{http://arxiv.org/pdf/math/0202034.pdf}.

\bibitem{Cas} P.~G.~Casazza, Consequences of the Marcus/Spielman/Srivastava solution to the Kadison--Singer Problem, \url{http://arxiv.org/abs/1407.4768}.

\bibitem{CS} M.~Chudnovsky, P.~Seymour, The roots of the independence polynomial of a clawfree graph, J. Combin. Theory Ser. B {\bf 97} (2007),  350--357.

\bibitem{Edm} J.~Edmonds, 
Lehman's switching game and a theorem of Tutte and Nash-Williams, 
J. Res. Nat. Bur. Standards Sect. B {\bf 69 B}  (1965),  73--77. 

\bibitem{Ga} 
L.~G\aa rding, {An inequality for hyperbolic polynomials}, 
J. Math. Mech. {\bf 8} (1959), 957--965. 

\bibitem{Gu} L.~Gurvits, Combinatorial and algorithmic aspects of hyperbolic polynomials, \url{http://arxiv.org/abs/math/0404474}.
%
%
%
%

\bibitem{Horm} L.~H\"ormander, The analysis of linear partial differential operators. II. Differential operators with constant coefficients, Springer-Verlag, Berlin, 1983.

\bibitem{Is} M.~E.~H.~Ismael, X.~Li, 
Bound on the extreme zeros of orthogonal polynomials, 
Proc. Amer. Math. Soc. {\bf 115} (1992), 131--140. 

\bibitem{KS} R.~V.~Kadison, I.~M.~Singer, Extensions of pure states, Amer. J. Math. {\bf 81} (1959),  383--400.

\bibitem{Kra} I.~Krasikov, 
On extreme zeros of classical orthogonal polynomials,
J. Comput. Appl. Math. {\bf 193} (2006), 168--182. 

%


\bibitem{MSS1} A.~W.~Marcus, D.~A.~Spielman, N.~Srivastava, Interlacing families I: Bipartite Ramanujan graphs of all degrees, Ann. of Math. (to appear), \url{http://arxiv.org/abs/1304.4132 }.

\bibitem{MSS2} A.~W.~Marcus, D.~A.~Spielman, N.~Srivastava, Interlacing families II: Mixed characteristic polynomials and the Kadison-Singer problem, Ann. of Math. (to appear), \url{http://arxiv.org/abs/1306.3969}.

\bibitem{Pem} R.~Pemantle, Hyperbolicity and stable polynomials in combinatorics and probability, Current developments in mathematics, 2011, 57--123, Int. Press, Somerville, MA, 2012, \url{http://arxiv.org/abs/1210.3231}. 

\bibitem{Ren} J.~Renegar,  Hyperbolic programs, and their derivative relaxations,  Found. Comput. Math., 
{\bf 6} (2006), 59--79.

\bibitem{Vin} V.~Vinnikov, 
LMI representations of convex semialgebraic sets and determinantal representations of algebraic hypersurfaces: past, present, and future,  Mathematical methods in systems, optimization, and control, 325--349, 
Oper. Theory Adv. Appl., {\bf 222}, BirkhŠuser/Springer Basel AG, Basel, 2012, \url{http://arxiv.org/abs/1205.2286}. 

\bibitem{Wag} D.~G.~Wagner, Multivariate stable polynomials: theory and applications, Bull. Amer. Math. Soc. {\bf 48} (2011), 53--84, \url{http://arxiv.org/abs/0911.3569}. 

\bibitem{We} N.~Weaver, 
The Kadison-Singer problem in discrepancy theory,
Discrete Math. {\bf 278} (2004),  227--239.

\end{thebibliography}
\end{document}